\newtheorem{theorem}{Theorem}[section]
\newtheorem{lemma}[theorem]{Lemma}
\newtheoremstyle{exampstyle}
  {} 
  {0cm} 
  {\itshape} 
  {} 
  {\bfseries} 
  {.} 
  {.5em} 
  {} 
\theoremstyle{exampstyle} 
\newtheorem{definition}[theorem]{Definition}
\newtheorem{proposition}[theorem]{Proposition}
\newtheorem{corollary}[theorem]{Corollary}
\newtheorem{example}[theorem]{Example}
\newcounter{alg}
{
 \refstepcounter{alg}%
  \textbf{Algorithm \thealg}\begin{itshape}%
}%
{\end{itshape}}
\newenvironment{mythm}[1]
  {\innercustomthm}
  {\endinnercustomthm}
\newenvironment{myprop}[1]
  {\innercustomprop}
  {\endinnercustomprop}
\definecolor{orange}{RGB}{255, 102, 0}
\definecolor{grey}{RGB}{194, 197, 204}
\definecolor{celeste}{RGB}{172,237,255}
\definecolor{naranja}{RGB}{255,174,133}
\definecolor{verde}{RGB}{189,236,182}
\newtheoremstyle{observation}
  {} 
  {0cm} 
  {} 
  {} 
  {\itshape} 
  {.} 
  {.5em} 
  {} 
\theoremstyle{observation}
\newtheorem{remark}[theorem]{Remark}
\numberwithin{equation}{section}
\renewcommand{\ALG@beginalgorithmic}{\footnotesize}
\DeclareRobustCommand{\Lcorner}{\mathbin{\mspace{1mu}\text{\L@corner}\mspace{1mu}}}
\newcommand{\L@corner}{%
  \setlength{\unitlength}{\fontcharht\font`T}%
  \begin{picture}(0.8,1)
  \roundcap
  \Line(0,1)(0,0)(0.8,0)
  \end{picture}%
}
\newcommand{\Rone}{\mathbb{Z}}
\newcommand{\Rmor}[1]{\mathbb{Z}^{#1}}
\newcommand{\Tone}[1]{\mathbb{Z}_{#1}}
\newcommand{\Tmor}[2]{(\mathbb{Z}_{#1})^{#2}}
\begin{document}

\title{Positive $3$-braids, Khovanov homology and Garside theory}

\author{Álvaro del Valle Vílchez}
\address{Departamento de Álgebra de la Universidad de Sevilla \& Instituto de Matemáticas de la Universidad de Sevilla (IMUS). SPAIN. }
\email{adelvalle3@us.es}
\thanks{The three autors were supported in part by the grant PID2020-117971GB-C21 funded by MCIN/AEI/10.13039/501100011033. The first author was also supported in part by the grant VII PPIT-US}

\author{Juan González-Meneses}
\address{Departamento de Álgebra de la Universidad de Sevilla \& Instituto de Matemáticas de la Universidad de Sevilla (IMUS). SPAIN.}
\email{meneses@us.es}

\author{Marithania Silvero}
\address{Departamento de Álgebra de la Universidad de Sevilla \& Instituto de Matemáticas de la Universidad de Sevilla (IMUS). SPAIN.}
\email{marithania@us.es}

\subjclass[2020]{Primary 57K10, 57K18, 20F36}




\begin{abstract}
Khovanov homology is a powerful invariant of oriented links that categorifies the Jones polynomial. Nevertheless, computing Khovanov homology of a given link remains challenging in general with current techniques. In this work we focus on links that are the closure of positive 3-braids. Starting with a classification of conjugacy classes of 3-braids arising from the Garside structure of braid groups, we compute, for any closed positive 3-braid, the first four columns (homological degree) and the three lowest rows (quantum degree) of the associated Khovanov homology table. Moreover, the number of rows and columns we can describe increases with the infimum of the positive braid (a Garside theoretical notion). We will show how to increase the infimum of a 3-braid to its maximal possible value by a conjugation, maximizing the number of cells in the Khovanov homology of its closure that can be determined, and show that this can be done in linear time.
\end{abstract}

\maketitle

\vspace{1cm}


\section{Introduction}

Khovanov homology is a celebrated link invariant introduced by Mikhail Khovanov \cite{Khovanov_2000} as a categorification of the Jones polynomial. Given an oriented diagram $D$ representing a link $L$, Khovanov constructed a family of $\mathbb{Z}$-graded chain complexes whose bigraded homology groups $H^{i,j}(D)$ are link invariants categorifying the Jones polynomial of the link. The groups $H^{i,j}(L)$, known as the \textit{Khovanov homology groups} of $L$, are indexed by the {\it homological} grading $i$ and the {\it quantum} grading $j$. 

This powerful invariant provides geometric and topological information of a link: it gives a lower bound on the slice genus of a knot \cite{Rasmussen_2010} (this allowed to give the first combinatorial proof of Milnor's Conjecture), and can detect fiberedness among positive links \cite{Kegel_Manikandan_Mousseau_Silvero_2023}. Surprisingly, it detects the unknot \cite{Kronheimer_Mrowka_2011}, both trefoils \cite{Baldwin_Sivek_2022}, the figure eight knot \cite{Baldwin_Dowlin_Levine_Lidman_Sazdanovic_2021}, and the cinquefoil $T(2,5)$ \cite{Baldwin_Hu_Sivek_2024}, among others.  

It is common to represent the Khovanov homology of a given link in a table, where the columns (resp. rows) are indexed by the homological index $i$ (resp. quantum index $j$). If the group $H^{i,j}(L)$ is non-trivial for certain values of $i$ and $j$, we include it in the cell indexed by $(i,j)$. Since every link has finitely many non-trivial homology groups, the indices in its Khovanov homology table range between the minimal and maximal values of $i$ (resp. $j$) for which there exists a non-trivial group $H^{i,*}(L)$ (resp. $H^{*,j}(L)$). 

There is a number of works devoted to compute Khovanov homology at certain homological and/or quantum gradings of some families of links. In the present paper we focus on braid positive links, i.e., those links which are closure of positive braids in terms of Artin generators. 
As a consequence of \cite{Khovanov_2003,Stosic_2005, Przytycki_Sazdanovic_2014, Przytycki_Silvero_2020}, the first two columns and lowest two rows of the Khovanov homology table of braid positive links is well known (in this paper we will refer to these groups as the $\Lcorner_{2,2}$-shape of the Khovanov homology of the link). More precisely, given a positive word $w$ representing a braid $\beta \in \mathbb{B}_n$ on $n$ strands, write $\underline{j} = l(w) - n$, where $l(w)$ is the length of $w$. Then, the associated closed braid $\widehat{\beta}$, if not split, satisfies:
$$H^{i,j}(\widehat{\beta})  = \left\{ \begin{array}{cl} \mathbb{Z} &  \mbox{ if } i=0 \, \mbox{ and} \, \, j \in \{\underline{j}, \underline{j}+2\};  \\  0 &  \mbox{ if } i=0 \, \mbox{ and} \, \, j \notin \{\underline{j}, \underline{j}+2\}; \\ 0 &  \mbox{ if }  i \neq 0 \, \, \mbox{ and} \, \, j \in \{\underline{j}, \underline{j}+2\};  \\ 0 &  \mbox{ if }  i=1; \\ 0 &  \mbox{ if } i<0 \, \mbox{ or } \, \, j<\underline{j}. \\  \end{array} \right.$$ 

In this work we go a step further and determine the $\Lcorner_{4,3}$-shape of the Khovanov homology of braid positive links with braid index at most $3$. We state our main result:

\begin{theorem}\label{th:main}
The Khovanov homology of a closed positive $3$-braid $\beta$ is as one of the Tables~\ref{tab:H(1)}--\ref{table:Kh_case_C4}. Moreover, $\beta$ is conjugate to a braid belonging to either $N=\{ 1, \sigma_1, \sigma_1^2, \sigma_1\sigma_2, \sigma_1^2 \sigma_2^2, \Delta \}$ (Tables~\ref{tab:H(1)}--\ref{tab:H(aba)}, respectively) or to some of the families $\mathbf{C1}$, $\mathbf{C2}$, $\mathbf{C3}$, $\mathbf{C4}$ (Tables~\ref{table:Kh_case_C1}--\ref{table:Kh_case_C4}, respectively), where
\[ \arraycolsep=1.4pt\def\arraystretch{1.8}
\begin{array}{rclcrcl} 
\mathbf{C1} & = & \{ \sigma_1^{k_1} \; | \; k_1 \geq 3 \}, & \text{\hspace{2cm}} & \mathbf{C4a} & = & \{ \beta \in \mathbb{B}_3 \; | \;  \inf(\beta) > 0 \} \setminus \{ \Delta \}, \\
\mathbf{C2} & = & \{ \sigma_1^{k_1}\sigma_2^2 \; | \; k_1 \geq 3 \}, & & \mathbf{C4b} & = & \{ \beta \in \mathbb{B}_3 \; | \; \inf_s(\beta) = 0 \text{ and } \operatorname{sl}(\beta) \geq 4 \}, \\
\mathbf{C3} & = & \{ \sigma_1^{k_1}\sigma_2^{k_2} \; | \; k_1,k_2 \geq 3 \}, & & \mathbf{C4} &  = & \mathbf{C4a} \cup \mathbf{C4b}.
\end{array}\]
\end{theorem}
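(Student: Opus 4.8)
The plan is to reduce the computation to a finite list of cases by first organizing the conjugacy classes of positive 3-braids via Garside theory, and then computing Khovanov homology within each class using skein long exact sequences together with the known results for torus links $T(2,k)$ and $T(3,k)$.

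First I would recall the Garside structure of $\mathbb{B}_3 = \langle \sigma_1, \sigma_2 \mid \sigma_1\sigma_2\sigma_1 = \sigma_2\sigma_1\sigma_2\rangle$ with Garside element $\Delta = \sigma_1\sigma_2\sigma_1$, and the invariants $\inf$, $\inf_s$ (the summit infimum, i.e.\ the maximal infimum over the conjugacy class) and the canonical length. The first step is a classification statement: every positive 3-braid is conjugate either to one of the six ``small'' braids in $N = \{1, \sigma_1, \sigma_1^2, \sigma_1\sigma_2, \sigma_1^2\sigma_2^2, \Delta\}$, or to a braid in one of the families $\mathbf{C1},\dots,\mathbf{C4}$. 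The families $\mathbf{C1}$, $\mathbf{C2}$, $\mathbf{C3}$ are the split or ``almost split'' cases ($\sigma_1^{k_1}$, $\sigma_1^{k_1}\sigma_2^2$, $\sigma_1^{k_1}\sigma_2^{k_2}$), whose closures are connected sums or split unions of $(2,k)$-torus links and Hopf links, so their Khovanov homology is obtained from the Künneth formula and the explicit computation of $\mathrm{Kh}(T(2,k))$. The family $\mathbf{C4}$ is split into $\mathbf{C4a}$, braids with $\inf > 0$ other than $\Delta$ itself (so $\Delta$ divides $\beta$ and $\beta$ is ``far from'' reducible), and $\mathbf{C4b}$, braids with $\inf_s = 0$ and self-linking number $\operatorname{sl}(\beta) \geq 4$; together with the excluded small cases these exhaust everything. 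This classification is the combinatorial heart and I would derive it by analysing left normal forms and cyclic sliding / cycling to reach the summit set, checking by hand which short summit braids occur.

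For the homological computation in each family I would proceed by induction on braid length using the unoriented skein triangle relating $\widehat{\beta}$, $\widehat{\beta\sigma_i^{-1}}$ (smoothing) and the closure with a crossing removed. For positive 3-braids one can always peel off a generator $\sigma_i$ to relate the closure of a length-$\ell$ positive braid to closures of shorter positive braids and of a positive 2-braid, and the low-degree part of the long exact sequence stabilizes. Concretely, I expect to show that the $\Lcorner_{4,3}$-shape (first four columns, three lowest rows) depends only on the conjugacy class data $(\inf, \inf_s, \operatorname{sl})$ and a bounded amount of extra information, because the contributions from the ``complicated'' part of $\beta$ land in homological degree $\geq 4$ or quantum degree above the third row. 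This is exactly why the number of determined rows and columns grows with $\inf(\beta)$: a large power of $\Delta$ pushes the interesting part of the complex up and to the right. The $\Lcorner_{2,2}$-shape stated in the introduction gives the base of the induction, and I would combine it with Stošić-type stability for torus braids and a direct analysis of the $\mathbf{C4}$ generators.

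The main obstacle will be the family $\mathbf{C4}$, and within it distinguishing $\mathbf{C4a}$ from $\mathbf{C4b}$: here the braid is genuinely pseudo-Anosov-like (not a torus braid), so there is no reduction to $(2,k)$-torus links, and one must control the skein long exact sequence carefully enough to see that the answer only depends on whether $\inf > 0$ or $(\inf_s = 0, \operatorname{sl}\geq 4)$. The delicate point is ensuring no extra homology appears in the first four columns beyond what the invariants predict — this requires either a dimension/rank count via the Euler characteristic (the Jones polynomial of a positive 3-braid, which is computable from the Burau representation), ruling out cancellation, or an explicit long exact sequence chase showing the connecting maps vanish in the relevant bidegrees. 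I would also need to verify that the finitely many small cases in $N$ and the boundary cases of $\mathbf{C1}$–$\mathbf{C3}$ (e.g.\ $k_i = 3$ versus $k_i$ large) are correctly assigned to their tables, which is a routine but lengthy check of explicit Khovanov homology computations.
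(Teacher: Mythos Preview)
Your classification step via Garside normal forms and summit sets is essentially the paper's Proposition~2.2, and your treatment of $\mathbf{C1}$--$\mathbf{C3}$ by recognising the closures as a split union $T(2,k)\sqcup U$ or connected sums $T(2,k_1)\# T(2,k_2)$ is correct and in fact cleaner than what the paper does for $\mathbf{C2}$ and $\mathbf{C3}$ (the paper runs the same skein argument there too, only invoking the split-union formula for $\mathbf{C1}$). Two small corrections: in this paper $\operatorname{sl}$ denotes \emph{syllable length}, not self-linking number; and $\mathbf{C4a}$ and $\mathbf{C4b}$ share the same table (Table~\ref{table:Kh_case_C4}), so there is nothing to distinguish between them homologically.

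The genuine gap is in your handling of $\mathbf{C4}$. You correctly propose induction on length via the skein long exact sequence at a chosen crossing, with $D_A$ the closure of a shorter positive braid. But your description of $D_B$ as ``a positive 2-braid'' is wrong: when $\inf(\beta)>0$ the $B$-smoothing of a crossing in $\widehat{\Delta^p\sigma_1 x}$ still carries the $\Delta^p$ part, and $D_B$ is not a torus link. The paper's key technical contribution, which your proposal lacks, is to recognise that $D_B$ is isotopic to a \emph{rational} link diagram $D_R$, and then to produce (Proposition~4.3) an explicit reduced alternating rational diagram $D_R'$ equivalent to $D_R$ while tracking exactly how the writhe changes. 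Because $D_R'$ is reduced alternating, one knows $\underline{i}(D_B)=-n(D_R')$ and $\underline{j}(D_B)=j_{\min}(D_R')$ precisely, and the writhe bookkeeping converts the desired vanishing of the $D_B$ terms in the long exact sequence into concrete numerical inequalities (the paper's (5.3)) that can be checked by hand in each subcase $\Lambda_1^+,\dots,\Lambda_5^+$. Your proposed substitutes --- an Euler-characteristic rank count via the Jones polynomial, or showing connecting maps vanish --- do not give this kind of grading control: the Jones polynomial cannot rule out cancelling pairs of torsion or free summands, and there is no obvious reason the connecting maps should vanish without first knowing the relevant $H^{*,*}(D_B)$ groups are zero in the target range. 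Without the rational-link step you have no mechanism to bound $\underline{i}(D_B)$ and $\underline{j}(D_B)$, and the induction does not close.
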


\begin{table}[!htb]
    \begin{minipage}[b]{.3\linewidth}
      \centering
      \captionsetup{width=.8\linewidth}
        \begin{tiny}
           \begin{tblr}{|c||c|}
            \hline
            \backslashbox{\!$j$\!}{\!$i$\!} & $0$ \\
            \hline
            \hline
            $3$  & $ \Rone $ \\
            \hline
            $1$  & $ \Rmor{3} $ \\
            \hline
            $-1$  & $ \Rmor{3} $ \\
            \hline
            $-3$  & $ \Rone $ \\
            \hline
            \end{tblr}\end{tiny}
            \caption{$H(\hat{1})$.}\label{tab:H(1)}
    \end{minipage} 
    \begin{minipage}[b]{.3\linewidth}
      \centering
      \captionsetup{width=.8\linewidth}
        \begin{tiny}
           \begin{tblr}{|c||c|}
            \hline
            \backslashbox{\!$j$\!}{\!$i$\!} & $0$ \\
            \hline
            \hline
            $2$  & $ \Rone $ \\
            \hline
            $0$  & $ \Rmor{2} $ \\
            \hline
            $-2$  & $ \Rone $ \\
            \hline
            \end{tblr}\end{tiny}
            \caption{$H(\widehat{\sigma_1})$.}\label{tab:H(a)}
            
    \end{minipage} 
    \begin{minipage}[b]{.3\linewidth}
      \centering
       \captionsetup{width=.8\linewidth}
       \begin{tiny}
        \begin{tblr}{|c||c|c|c|}
            \hline
            \backslashbox{\!$j$\!}{\!$i$\!} & $0$ & $1$ & $2$ \\
            \hline
            \hline
            $7$  &   &   & $ \Rone $ \\
            \hline
            $5$  &   &   & $ \Rmor{2} $ \\
            \hline
            $3$  & $ \Rone $ &   & $ \Rone $ \\
            \hline
            $1$  & $ \Rmor{2} $ &   &   \\
            \hline
            $-1$  & $ \Rone $ &   &   \\
            \hline
        \end{tblr}   \end{tiny}
        \caption{$H(\widehat{\sigma_1^2})$.}\label{tab:H(aa)}
     
    \end{minipage}%
    
\end{table}

\begin{table}[!htb]
    \centering
    \begin{minipage}[b]{.3\linewidth}
      \centering
      \captionsetup{width=.8\linewidth}
        \begin{tiny}
           \begin{tblr}{|c||c|}
            \hline
            \backslashbox{\!$j$\!}{\!$i$\!} & $0$ \\
            \hline
            \hline
            $1$  & $ \Rone $ \\
            \hline
            $-1$  & $ \Rone $ \\
            \hline
         \end{tblr}  \end{tiny}
         \caption{$H(\widehat{\sigma_1\sigma_2})$.}\label{tab:H(ab)}
    \end{minipage} 
    \begin{minipage}[b]{.3\linewidth}
      \centering
      \captionsetup{width=.8\linewidth}
        \begin{tiny}
           \begin{tblr}{|c||c|c|c|c|c|}
                \hline
                \backslashbox{\!$j$\!}{\!$i$\!} & $0$ & $1$ & $2$ & $3$ & $4$ \\
                \hline
                \hline
                $11$  &   &   &   &   & $ \Rone $ \\
                \hline
                $9$  &   &   &   &   & $ \Rone $ \\
                \hline
                $7$  &   &   & $ \Rmor{2} $ &   &   \\
                \hline
                $5$  &   &   & $ \Rmor{2} $ &   &   \\
                \hline
                $3$  & $ \Rone $ &   &   &   &   \\
                \hline
                $1$  & $ \Rone $ &   &   &   &   \\
                \hline
            \end{tblr}  \end{tiny}
            \caption{$H(\widehat{\sigma_1^2\sigma_2^2})$.}\label{tab:H(aabb)}
    \end{minipage} 
    \begin{minipage}[b]{.3\linewidth}
      \centering
      \captionsetup{width=.8\linewidth}
       \begin{tiny}
        \begin{tblr}{|c||c|c|c|}
            \hline
            \backslashbox{\!$j$\!}{\!$i$\!} & $0$ & $1$ & $2$ \\
            \hline
            \hline
            $6$  &   &   & $ \Rone $ \\
            \hline
            $4$  &   &   & $ \Rone $ \\
            \hline
            $2$  & $ \Rone $ &   &   \\
            \hline
            $0$  & $ \Rone $ &   &   \\
            \hline
        \end{tblr} \end{tiny}
        \caption{$H(\widehat{\Delta})$.}\label{tab:H(aba)}
	\end{minipage}
\end{table}

\begin{table}[!htb]
    \begin{minipage}[b]{.49\linewidth}
      \centering
       \begin{tiny}
\begin{tblr}{|c||c|c|c|c|c|c|c|}
                \hline
                \backslashbox{\!$j$\!}{\!$i$\!} & $0$ & $1$ & $2$ & $3$ & $\cdots$ \\
                \hline
                \hline
                $\vdots$             &   &   &   &  & \SetCell[r=4]{c}{$W_{\widehat{\beta}}$}      \\
                \cline{1-5}
                $\underline{j}+10$  &   &   &   & $ \Rone $ &   \\
                \cline{1-5}
                $\underline{j}+8$  &   &   &   & $ \Rone \oplus \Tone{2}  $ &   \\
                \cline{1-5}
                $\underline{j}+6$  &             &   &  $ \Rone $   & $ \Tone{2} $ &   \\
                \hline
                $\underline{j}+4$  & $ \Rone $   &   & $ \Rone $ & &   \\
                \hline
                $\underline{j}+2$  & $ \Rone^2 $ &   &   & &    \\
                \hline
                $\underline{j}$    & $ \Rone $   &   &   & &  \\
                \hline
                \end{tblr} \end{tiny}
        \caption{$H(\widehat{\beta})$ with $\beta\in \mathbf{C1}$.}\label{table:Kh_case_C1}
    \end{minipage}%
    \begin{minipage}[b]{.49\linewidth}
      \centering
        \begin{tiny}
            \begin{tblr}{|c||c|c|c|c|c|c|c|}
                \hline
                \backslashbox{\!$j$\!}{\!$i$\!} & $0$ & $1$ & $2$ & $3$ & $\cdots$ \\
                \hline
                \hline
                $\vdots$             &   &   &   &  & \SetCell[r=3]{c}{$X_{\widehat{\beta}}$}         \\
                \cline{1-5}
                $\underline{j}+8$  &   &   &   & $ \Rone  $ & \\
                \cline{1-5}
                $\underline{j}+6$  &             &   &   $ \Rone  $  & $\Tone{2} $ & \\
                \hline
                $\underline{j}+4$  &           &   & $ \Rone^2 $ & &   \\
                \hline
                $\underline{j}+2$  & $ \Rone $ &   &   & &    \\
                \hline
                $\underline{j}$    & $ \Rone $   &   &   & &    \\
                \hline
            \end{tblr} \end{tiny}
            \caption{$H(\widehat{\beta})$ with $\beta\in \mathbf{C2}$.}\label{table:Kh_case_C2}
    \end{minipage} 
\end{table}

\begin{table}[!htb]
    \begin{minipage}[b]{0.49\textwidth}
        \centering
        \begin{tiny}
        \begin{tblr}{|c||c|c|c|c|c|c|c|}
            \hline
            \backslashbox{\!$j$\!}{\!$i$\!} & $0$ & $1$ & $2$ & $3$ & $\cdots$ \\
            \hline
            \hline
            $\vdots$             &   &   &   &  &    \SetCell[r=3]{c}{$Y_{\widehat{\beta}}$}    \\
            \cline{1-5}
            $\underline{j}+8$  &   &   &   & $ \Rone^2  $ &  \\
            \cline{1-5}
            $\underline{j}+6$  &  & &     & $(\Tone{2})^2$  & \\
            \hline
            $\underline{j}+4$  &        &  & $ \Rone^2 $ & &   \\
            \hline
            $\underline{j}+2$  & $ \Rone $ &  &    & &    \\
            \hline
            $\underline{j}$    & $ \Rone $   &    &   & &    \\
            \hline
        \end{tblr} \end{tiny}
        \caption{$H(\widehat{\beta})$ with $\beta\in \mathbf{C3}$.}\label{table:Kh_case_C3}
    \end{minipage} 
    \begin{minipage}[b]{0.49\textwidth}
        \centering
        \begin{tiny}
        \begin{tblr}{|c||c|c|c|c|c|c|c|}
            \hline
            \backslashbox{\!$j$\!}{\!$i$\!} & $0$ & $1$ & $2$ & $3$ & $\cdots$ \\
            \hline
            \hline
            $\vdots$             &   &   &   &  & \SetCell[r=3]{c}{$Z_{\widehat{\beta}}$}        \\
            \cline{1-5}
            $\underline{j}+8$  &   &   &   & $ \Rone $ &  \\
            \cline{1-5}
            $\underline{j}+6$  &   &   &   & $ \Tone{2} $ & \\
            \hline
            $\underline{j}+4$  &   &   & $ \Rone $ & &   \\
            \hline
            $\underline{j}+2$  & $ \Rone $ &   &   & &   \\
            \hline
            $\underline{j}$    & $ \Rone $ &   &   & &   \\
            \hline
        \end{tblr} \end{tiny}
        \caption{$H(\widehat{\beta})$ with $\beta\in \mathbf{C4}$.}\label{table:Kh_case_C4}
    \end{minipage}
\end{table}

Several authors have already analyzed Khovanov homology of certain families of braid positive links on $3$ strands: in \cite{Turner_2008} Turner computed Khovanov homology of the torus links $T(3,q)$ with coeffients in $\mathbb{Q}$ or $\mathbb{Z}_p$ for an odd prime $p$ (compare to the work by Stošić about Khovanov homology of torus knots \cite{Stosic_2007} and to that of Benheddi about reduced Khovanov homology of $T(3,q)$ over $\mathbb{Z}_2$ \cite{Benheddi_2017}). Chandler, Lowrance, Sazdanovi\'c and Summers \cite{Chandler_Lowrance_Sazdanovic_Summers_2022} computed Khovanov homology for those links arising as closures of the first four families $\Omega_0 - \Omega_3$ of 3-braids in Murasugi's classification (i.e., those corresponding to powers of the half twist $\Delta = \sigma_1\sigma_2\sigma_1$ concatenated with a unique simple factor), while in \cite{Przytycki_Silvero_2024} Przytycki and Silvero determined the extreme Khovanov homology groups (i.e., those corresponding to the lowest row in the Khovanov homology table) for closed braids on at most $4$ strands. See also \cite{Lowrance_2011}. Our results are complementary to the aforementioned previous works: we analyze all closed positive $3$-braids (and not just some specific families), and we completely describe the first four columns and three lowest rows of their Khovanov homology tables over $\mathbb{Z}$. Moreover, combining Theorem \ref{th:main} with a result by Jaeger \cite{Jaeger_2011} we can explicitly determine the Khovanov homology table of closed positive 3-braids for a \textit{bigger} number of columns and rows. The exact part of the table that is determined depends on the \textit{infimum} of the braid, which is a parameter appearing in its Garside normal form (see Definition~\ref{Th:left_normal_form}). More precisely, we obtain the following:

\begin{theorem}\label{main2}
    Let $\beta$ be a positive 3-braid and write $p=\inf(\beta)$. Define the quantities $\mathbf{i}(p) =  4 \lfloor \frac{p}{2} \rfloor + 3$ and $\mathbf{j}(p)= \underline{j}(\widehat{\beta}) +6 \lfloor \frac{p}{2} \rfloor + 4$. Then, $H^{i,j}(\widehat{\beta})$ for every $(i,j)$ with $i=0,1, \dots, \mathbf{i}(p)$ or $j=\underline{j}(\widehat{\beta}), \underline{j}(\widehat{\beta}) + 2, \dots, \mathbf{j}(p)$ is as shown in one of the Tables \ref{tab:Delta_p_even}--\ref{tab:Delta_p_C4}. The precise table corresponding to $\widehat{\beta}$ can be deduced from its normal form.
\end{theorem}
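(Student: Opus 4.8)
The plan is to combine Theorem~\ref{th:main}, which describes the $\Lcorner_{4,3}$-shape of the Khovanov homology of any closed positive $3$-braid, with the periodicity result of Jaeger \cite{Jaeger_2011}. Jaeger's theorem states (roughly) that adding a full twist $\Delta^2$ to a $3$-braid shifts a large part of its Khovanov homology table by a fixed amount in both gradings; more precisely, the homology of $\widehat{\Delta^2\beta}$ agrees, in a range of homological degrees growing linearly in the number of full twists, with a shifted copy of the homology of $\widehat{\beta}$. Concretely, the shift is $(4,6)$ per full twist: each $\Delta^2$ contributes $+6$ crossings, hence shifts $\underline{j}$ by $+6$, and shifts the homological window by $+4$. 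This is exactly why the quantities $\mathbf{i}(p)=4\lfloor p/2\rfloor+3$ and $\mathbf{j}(p)=\underline{j}(\widehat\beta)+6\lfloor p/2\rfloor+4$ appear: starting from the $\Lcorner_{4,3}$-shape (the $p=0$ case, columns $0$–$3$ and rows $\underline j,\underline j+2,\underline j+4$) and applying Jaeger's shift $\lfloor p/2\rfloor$ times extends the known region by $4\lfloor p/2\rfloor$ columns and $6\lfloor p/2\rfloor$ rows.

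The key steps, in order, are as follows. First, I would recall from the Garside-theoretic preliminaries that every positive $3$-braid $\beta$ with $\inf(\beta)=p$ can be written (up to conjugacy, which does not change the closure or its Khovanov homology) as $\Delta^p$ times a product of at most two permutation braids, or more usefully, that after conjugation $\beta = \Delta^{2\lfloor p/2\rfloor}\gamma$ where $\gamma$ is a positive $3$-braid with $\inf(\gamma)\in\{0,1\}$; the braid $\gamma$ falls into one of the cases $N$, $\mathbf{C1}$–$\mathbf{C4}$ of Theorem~\ref{th:main}. Second, I would invoke Theorem~\ref{th:main} to read off the $\Lcorner_{4,3}$-shape of $H(\widehat\gamma)$ from the appropriate one of Tables~\ref{tab:H(1)}–\ref{table:Kh_case_C4}. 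Third, I would apply Jaeger's periodicity result $\lfloor p/2\rfloor$ times to transport this data to $H(\widehat\beta)$, checking that the range in which Jaeger's isomorphism holds covers all of $i=0,\dots,\mathbf{i}(p)$ and all of $j=\underline j(\widehat\beta),\dots,\mathbf{j}(p)$. Finally, I would organize the resulting descriptions into the Tables~\ref{tab:Delta_p_even}–\ref{tab:Delta_p_C4}, splitting into the cases dictated by the parity of $p$ (since $\Delta^{2k+1}=\Delta^{2k}\cdot\Delta$ and the extra $\Delta$ shifts by $(2,3)$, producing a different table shape) and by which family $\gamma$ belongs to.

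The main obstacle I expect is verifying the precise range of validity of Jaeger's isomorphism and making sure it is tight enough to reach exactly the columns up to $\mathbf{i}(p)$ and rows up to $\mathbf{j}(p)$ — in particular, confirming that the "boundary'' cells where Jaeger's shifted copy meets the $\Lcorner_{4,3}$-data of the next layer are correctly accounted for, with no overlap mismatch or off-by-one error in the homological window. A secondary subtlety is handling the odd-$p$ case cleanly: one must check that the single leftover $\Delta$ interacts correctly both with the stack of full twists (Jaeger) and with the $\Lcorner_{4,3}$-computation (Theorem~\ref{th:main}), which may require treating $\inf(\beta)$ odd versus even as genuinely separate table templates rather than a uniform formula. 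Once the range bookkeeping is pinned down, the remaining work is the routine but lengthy task of assembling the case-by-case tables, which I would relegate to the figures and a short case analysis.
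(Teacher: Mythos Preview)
Your approach is essentially identical to the paper's: write $\beta=\Delta^{2\lfloor p/2\rfloor}\gamma$ with $\inf(\gamma)\in\{0,1\}$, conjugate $\gamma$ into one of the families of Theorem~\ref{th:main} (using that $\Delta^2$ is central), read off the $\Lcorner_{4,3}$-shape there, and then apply Jaeger's theorem $\lfloor p/2\rfloor$ times. Two minor corrections: each application of Jaeger determines $3$ additional rows (not $6$, since the quantum grading jumps by $2$), and the odd-$p$ case is handled not by an extra $(2,3)$ shift from a lone $\Delta$ but by absorbing that $\Delta$ into $\gamma$, which then lands in $\mathbf{C4a}$ (or is $\Delta$ itself) --- the no-overlap concern you raise is exactly what the Remark preceding the proof dispatches, using that $\overline{i}(\widehat{\Delta^2 r(w)})\leq 5$ while column $i=1$ of $H(\widehat{w})$ is always trivial.
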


Our results also provide a criterion to obstruct braid positivity among links with braid index $3$. In particular, as a consequence of the fact that any positive braid link of braid index $3$ is realized as a closed positive braid on $3$ strands \cite[Th. 1.3]{Stoimenow_2017}, we establish the following result:

\begin{corollary} Let $L$ be a link whose Khovanov homology is not as any of Tables 1--10. Then:
\begin{enumerate}[label=(\roman*),topsep=0pt]
\item If $L$ is braid positive, then its braid index is at least $4$.
\item If the braid index of $L$ is $3$, then $L$ is not braid positive.
\end{enumerate}
\end{corollary}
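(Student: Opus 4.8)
The plan is to deduce the corollary directly from Theorem~\ref{th:main} together with the realization result \cite[Th.~1.3]{Stoimenow_2017}. It suffices to prove~(i): statement~(ii) is then immediate, since a link of braid index exactly $3$ that were braid positive would, by~(i), be forced to have braid index at least $4$. We therefore prove the contrapositive of~(i), namely: \emph{every braid-positive link $L$ of braid index $b\le 3$ has Khovanov homology agreeing with one of Tables~\ref{tab:H(1)}--\ref{table:Kh_case_C4}.}

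We argue according to the value of $b$. If $b=3$, then by \cite[Th.~1.3]{Stoimenow_2017} the link $L$ is the closure of a positive $3$-braid, so Theorem~\ref{th:main} applies directly. If $b=1$, then $L$ is the unknot, i.e.\ $L=\widehat{\sigma_1\sigma_2}$, and Theorem~\ref{th:main} yields Table~\ref{tab:H(ab)}. The case $b=2$ requires a short preliminary observation: here $L=\widehat{\sigma_1^{k}}$ for some $k\in\mathbb{Z}$, and since the Khovanov complex of an all-positive diagram is supported in non-negative homological degrees, braid positivity of $L$ forces $H^{i,j}(L)=0$ for all $i<0$. As $\widehat{\sigma_1^{k}}$ with $k\le-2$ is the mirror of the torus link $T(2,\abs{k})$, hence has non-trivial Khovanov homology in homological degree $-\abs{k}<0$, we conclude $k\ge -1$; thus $L$ is a closed positive $2$-braid ($k\ge1$), the unknot ($k=-1$), or the $2$-component unlink ($k=0$). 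In every one of these subcases $L$ is the closure of a positive $3$-braid --- one applies a single positive Markov stabilization to write $\widehat{\sigma_1^{k}}=\widehat{\sigma_1^{k}\sigma_2}$ when $k\ge1$, while $\widehat{\sigma_1\sigma_2}$ and $\widehat{\sigma_1}\in\mathbb{B}_3$ realize the two degenerate cases --- so Theorem~\ref{th:main} applies once more. This exhausts the possibilities and proves the contrapositive of~(i), hence~(i), and with it~(ii).

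We do not anticipate any genuine obstacle: the mathematical content is entirely carried by Theorem~\ref{th:main} and by \cite[Th.~1.3]{Stoimenow_2017}. Two points deserve a moment of care. First, the reduction of the low braid-index cases $b\le 2$ to closed positive $3$-braids, which is needed because \cite[Th.~1.3]{Stoimenow_2017} is phrased for braid index exactly $3$; this is the elementary discussion above. Second, the hypothesis ``$H(L)$ is not as any of Tables~1--10'' must be read as a statement about exactly those cells of the Khovanov table that Tables~\ref{tab:H(1)}--\ref{table:Kh_case_C4} specify (the $\Lcorner_{4,3}$-shape, together with the few extra cells appearing in Tables~\ref{table:Kh_case_C1}--\ref{table:Kh_case_C4}), so that the contradiction with Theorem~\ref{th:main} is drawn against a portion of the homology that is genuinely determined.
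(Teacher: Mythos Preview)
Your proof is correct and follows the same approach the paper intends: the corollary is deduced from Theorem~\ref{th:main} combined with Stoimenow's realization result \cite[Th.~1.3]{Stoimenow_2017}, with (ii) an immediate consequence of (i). The paper in fact gives no formal proof of the corollary at all---it simply states it as a consequence of those two ingredients---so your write-up is more detailed than the paper's, particularly in handling the cases $b\le 2$ explicitly (the paper only invokes Stoimenow for $b=3$ and leaves $b\le 2$ implicit). Your final paragraph of meta-commentary is accurate but unnecessary for the proof itself.
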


It is worth mentioning that the infimum of a braid is not preserved under conjugation. The maximal value of the infimum in the conjugacy class of a braid $\beta$ is called the \emph{summit infimum} of $\beta$, and it is well known that a braid is conjugate to a positive braid if and only if its summit infimum is non-negative. One can compute this value by using standard procedures from Garside theory. In this paper we will also show how to conjugate a 3-braid $\beta$ to a braid in a suitable family, whose infimum is the summit infimum of $\beta$, in linear time. By doing so, we can determine the maximal number of cells from Theorem~\ref{main2}. From the other perspective, this result implies that part of the Garside structure of a given $3$-braid is captured in the Khovanov homology table of its closure. 

Despite their conceptually simple definitions, computing the Jones polynomial and (therefore) the Khovanov homology of an arbitrary link is an NP-hard problem. However, when the link is given as a closed braid with a fixed number of strands, the computation of its Jones polynomial can be performed in polynomial time with respect to the number of crossings \cite{Morton_Short_1987}. A similar result is conjectured for Khovanov homology \cite{Przytycki_Silvero_2024}. As an application of our results, we prove that:

\begin{proposition}\label{prop:complexity}
The $\Lcorner_{4\lfloor p/2\rfloor+4,3\lfloor p/2\rfloor+3}$-shape of the Khovanov homology of the closure of a 3-braid with summit infimum $p\geq 0$ can be computed in linear time.
\end{proposition}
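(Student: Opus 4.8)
The plan is to reduce the computation to two ingredients, each of which is linear in the number of crossings: first, bringing the $3$-braid to a canonical form from which the relevant table can be read off via Theorem~\ref{main2}; and second, observing that the table itself is then determined by a bounded amount of additional data (such as the summit infimum $p$ and the value $\underline{j}(\widehat{\beta})$, which is just $l(w)-3$ for a positive word $w$ of length $l(w)$). The key point is that Theorem~\ref{main2} expresses the entire $\Lcorner_{\mathbf{i}(p),\mathbf{j}(p)}$-shape as one of the fixed Tables~\ref{tab:Delta_p_even}--\ref{tab:Delta_p_C4}, so once we know (i) which of the families $N$, $\mathbf{C1}$--$\mathbf{C4}$ a conjugate of $\beta$ lies in, (ii) the parity of $p$, and (iii) the integer $p$ itself (together with $\underline{j}$), the output is a closed-form description requiring no link-homological computation at all.

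First I would recall the standard fact from Garside theory that the left normal form of a $3$-braid $\beta$, and hence $\inf(\beta)$, $\sup(\beta)$, and the canonical factors, can be computed in time linear in the word length once the number of strands is fixed; on $\mathbb{B}_3$ the relevant automata and the cycling/decycling operations have bounded size, so there is no hidden quadratic factor. Next I would invoke the constructive part of the paper that precedes this proposition: the algorithm that conjugates a positive $3$-braid to a braid whose infimum equals its summit infimum $p$ and which lies in one of the distinguished families (this is the ``linear time'' conjugation promised in the introduction). Applying this, we obtain a representative $\beta'$ of the conjugacy class with $\inf(\beta')=p$, together with the bookkeeping data needed to match it against the correct row of Theorem~\ref{main2}. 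The conjugation changes neither the link type nor the Khovanov homology, and it changes $\underline{j}$ in a controlled way that we track alongside.

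With $\beta'$ and $p$ in hand, the remaining step is purely symbolic: read off the entries of the $\Lcorner_{4\lfloor p/2\rfloor+4,\,3\lfloor p/2\rfloor+3}$-shape from the appropriate table of Theorem~\ref{main2}. Since $\mathbf{i}(p)=4\lfloor p/2\rfloor+3 \geq 4\lfloor p/2\rfloor+4 -1$ and $\mathbf{j}(p)=\underline{j}(\widehat{\beta})+6\lfloor p/2\rfloor+4 \geq \underline{j}(\widehat{\beta})+2(3\lfloor p/2\rfloor+3)-2$, the shape we are asked for is contained in the region covered by Theorem~\ref{main2} (the ``$\Lcorner_{a,b}$-shape'' convention counts the first $a$ columns and lowest $b$ rows, i.e.\ homological degrees $0,\dots,a-1$ and quantum degrees $\underline{j},\underline{j}+2,\dots,\underline{j}+2(b-1)$), so every requested cell appears in the fixed table. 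Writing down these $O(p)$ entries is linear in $p$, hence linear in the number of crossings, which completes the estimate.

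The main obstacle is not conceptual but bookkeeping: one must check carefully that the index ranges promised here genuinely sit inside those guaranteed by Theorem~\ref{main2} for \emph{every} family and both parities of $p$ — in particular that no requested cell falls in the unknown regions $W_{\widehat\beta}, X_{\widehat\beta}, Y_{\widehat\beta}, Z_{\widehat\beta}$ or their $\Delta^{p}$-shifted analogues — and that the linear-time conjugation algorithm invoked above really does land in a family listed in Theorem~\ref{main2} (rather than merely in one listed in Theorem~\ref{th:main}) while simultaneously achieving $\inf=p$. Both are verifications against the earlier sections rather than new arguments, but they are where the proof must be precise.
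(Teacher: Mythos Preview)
Your overall strategy matches the paper's: compute a canonical conjugate in some $\Lambda_i$ achieving the summit infimum, then read off the table via Theorem~\ref{main2}. The index check that the requested $\Lcorner_{4\lfloor p/2\rfloor+4,\,3\lfloor p/2\rfloor+3}$-shape sits inside the region covered by Theorem~\ref{main2} is correct.

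The gap is that you treat as already established precisely the two claims that constitute the substance of the paper's proof. First, the ``standard fact'' that the left normal form of a $3$-braid is computable in linear time is not standard: generic Garside normal-form algorithms are quadratic in the word length even for fixed strand number, and the paper spends effort showing that in $\mathbb{B}_3$ one can incrementally append letters to a normal form in constant time per letter (by a small case analysis on the last syllable). Second, the ``linear-time conjugation promised in the introduction'' is not proved anywhere \emph{before} this proposition --- that promise is a forward reference to the proof of Proposition~\ref{prop:complexity} itself. Proposition~\ref{prop:representatives_conjugacy_classes_5_families} is constructive, but its proof does not analyse complexity; the paper's proof of the present proposition revisits that induction, observes that each conjugation step is constant time and drops $n_\beta$ by $3$, and only then concludes $O(k)$. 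Your proposal outsources both of these to earlier sections where they do not appear, so as written it is circular.

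A minor point: conjugation preserves the link type of the closure, hence $\underline{j}$ is unchanged; there is nothing to ``track alongside''.
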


Our proofs are based in two main steps: the first one consists of analyzing the Garside structure of $3$-braids to obtain a classification into $5$ families, up to conjugation, and a description of their normal forms (Proposition \ref{prop:representatives_conjugacy_classes_5_families}). Then, given a positive $3$-braid in one of the former families represented by a word $w$, we use Khovanov skein exact sequence and the associated long exact sequence on homology to describe the homology of the associated link represented by $D=\widehat{w}$ in terms of the homology of two simpler diagrams $D_A$ and $D_B$. We select the special crossing in $D$ in such a way that $D_B$ represents a rational (and therefore) alternating link. In Proposition \ref{prop:w(D_R)_w(D_R')} we present an algorithm which recibes as input a standard diagram of a rational link and produces an equivalent alternating rational diagram, while keeping track of the relation on the number of positive and negative crossings in both diagrams. This result will be crucial in the proof of Theorem \ref{th:main}.

Our techniques and results can be dualized to the case of braid negative links. In that case, each homology table would be mirrored (see~\cite[Cor. 11]{Khovanov_2000}). In particular, Theorem \ref{th:main} would explicitly describe the last $4$ columns and the uppermost $3$ rows of the Khovanov homology table of the given (negative) link.

The plan of the paper is as follows: In Section~\ref{sec:braid_groups} we recall some preliminaries on braid groups and their Garside structure, while in Section~\ref{sec:khovanov_homology_semi-adequate} we briefly review the definition of Khovanov homology, with an emphasis on some particularities about semiadequate links. In Section~\ref{sec:rational_links} we present our algorithm transforming a rational link diagram into an equivalent rational alternating diagram, and analyze the relation among the writhes of both diagrams. This result is fundamental in the proof of Theorem~\ref{th:main}, that we defer to Section \ref{sec:L43-shape}. Finally, in Section \ref{sec:final} we prove Theorem~\ref{main2} and Proposition~\ref{prop:complexity}.

\section{Garside structure of braid groups}\label{sec:braid_groups}

In this work, we will use the classical presentation of the braid group on $n \geq 1$ strands, $$ \mathbb{B}_n=\left\langle\sigma_1, \ldots, \sigma_{n-1} \left\lvert\, \begin{array}{cc}
\sigma_i \sigma_j=\sigma_j \sigma_i, & |i-j|>1 \\
\sigma_i \sigma_j \sigma_i=\sigma_j \sigma_i \sigma_j, & |i-j|=1
\end{array}\right.\right\rangle ,$$
introduced by Artin in \cite{Artin_1925}. Every generator $\sigma_i$ and its inverse $\sigma_i^{-1}$ correspond to the geometric braids depicted in Figures \ref{fig:sigma_i} and \ref{fig:sigma_i_inv}, respectively.
\begin{figure}[h!]
\centering
\begin{subfigure}{.2\textwidth}
  \centering
  \includegraphics[width=.75\linewidth]{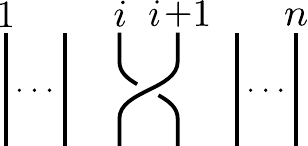}
  \caption{$\sigma_i$}\label{fig:sigma_i}
\end{subfigure}
{\hspace{0.5cm}}
\begin{subfigure}{.2\textwidth}
  \centering
  \includegraphics[width=.75\linewidth]{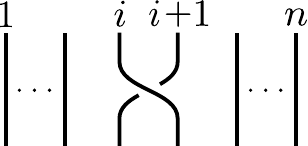}
  \caption{$\sigma_i^{-1}$}\label{fig:sigma_i_inv}
\end{subfigure}
\caption{Artin generators and their inverses.}\label{fig:artin_generators_inverses}
\end{figure}

Each braid $\beta \in \mathbb{B}_n$ can be represented by infinitely many words in the Artin generators and their inverses. The braids that can be represented by a word involving only non-negative powers of the generators are called \emph{positive braids}. The subset of $\mathbb{B}_n$ consisting of all positive braids is a submonoid denoted by $\mathbb{B}_n^+$. Given $\alpha, \beta \in \mathbb{B}_n$, we say that $\alpha \preccurlyeq \beta $ if there exists $\gamma \in \mathbb{B}_n^+$ such that $\alpha \gamma = \beta $. If this is the case, we say that $\alpha$ is a {\em prefix} of $\beta$. This is a lattice order of $\mathbb{B}_n$. In particular, given two $n$-braids $\alpha, \beta$, they admit a unique greatest common divisor $\alpha\wedge \beta$ with respect to $\preccurlyeq$.

There are some parameters of braid words that will play an important role in subsequent sections. The \emph{length of a word} $w$, $l(w)$, counts the number of letters that appear in it. Since the relations in the standard presentation of $\mathbb{B}_n$ are homogeneous, all positive words that represent a positive braid have the same length. The \emph{length of a braid} $\beta \in \mathbb{B}_n^+$, denoted $l(\beta)$, can be defined as the length of any of its positive representatives. The \emph{syllable length} of a word $w=\sigma_{i_1}^{k_1} \cdots \sigma_{i_m}^{k_m}$ with each $k_r \neq 0$, and $\sigma_{i_{r}} \neq \sigma_{i_{r+1}}$ for $r=1, \dots, m-1$, is defined as $\operatorname{sl}(w)=m$.

The Garside element $$\Delta = \sigma_1 (\sigma_2 \sigma_1) \cdots (\sigma_{n-1} \cdots \sigma_2 \sigma_1) \in \mathbb{B}_n$$ satisfies $\sigma_i \Delta^{\pm 1} = \Delta^{\pm 1} \sigma_{n-i}$ for every $i=1, \dots, n-1$, and $\Delta = \sigma_i x_i$ for some $x_i \in \mathbb{B}_n^+$, for each $i=1, \dots, n-1$. Then, given an $n$-braid word, we can replace each letter $\sigma_i^{-1}$ by $x_i \Delta^{-1}$, and slide all occurrences of $\Delta^{-1}$ to the beginning of the word, obtaining another representative of the form $\Delta^p x$, where $p \in \mathbb{Z}$ and $x \in \mathbb{B}_n^+$~\cite{Garside_1969}. In~\cite{Elrifai_Morton_1994} and \cite{Epstein_1992} this was refined, leading to the following notion.
\begin{definition}\label{Th:left_normal_form}
    The \emph{(left) normal form} of $\beta \in \mathbb{B}_n$ is the unique decomposition $\beta = \Delta^p a_1 \cdots a_{\ell}$ so that $p \in \mathbb{Z}$, $\ell \geq 0$, $1 \prec a_k \prec \Delta$ for every $k = 1, \dots, \ell$, and $(a_k a_{k+1}) \wedge \Delta = a_k$ for every $k = 1, \dots, \ell - 1$. Given such a decomposition, each $a_i$ is called a 
 \emph{simple factor} of $\beta$. Moreover, the \emph{infimum} and the \emph{supremum} of $\beta$ are defined as $\inf (\beta)=p$ and $\sup (\beta)=p+\ell$, respectively.
\end{definition}

Given a braid $\beta\in \mathbb{B}_n$, consider its conjugacy class $\beta^{\mathbb{B}_n}$. The \emph{summit infimum} of $\beta$ is defined as $$ {\inf}_s(\beta) = \max \{ \inf(\alpha) \; | \; \alpha \in \beta^{\mathbb{B}_n} \}.$$  
Almost directly from the foundational work of Garside  (see~{\cite[Sec. 4]{Garside_1969}} and \cite[Sec. 1]{Elrifai_Morton_1994} for more details), it follows that this value actually exists, and in {\cite{Elrifai_Morton_1994}} it is shown that \[\operatorname{SS}(\beta) = \{\alpha \in \beta^{\mathbb{B}_n} \mid \inf (\alpha)={\inf}_s(\beta) \}\] is a non-empty and finite subset of $\beta^{\mathbb{B}_n}$. This subset is known as the \emph{summit set} of $\beta$, and its study has been extensive, with the aim of developing efficient algorithms to solve the conjugacy problem in braid groups. We recall that a braid $\beta$ is conjugate to a positive braid if and only if $\inf_s(\beta)\geq 0$.

\subsection{Left normal form of $3$-braids} Given a braid $\beta \in \mathbb{B}_3$, the structure of its normal form is particularly simple. Since $\Delta = \sigma_1 \sigma_2 \sigma_1 = \sigma_2 \sigma_1 \sigma_2$, the positive prefixes of $\Delta$ are $1$, $\sigma_1$, $\sigma_2$, $\sigma_1\sigma_2$, $\sigma_2\sigma_1$ and $\Delta$, and therefore the only possible simple factors in a left normal form are $\sigma_1$, $\sigma_2$, $\sigma_1\sigma_2$, and $\sigma_2\sigma_1$. Each of these four braids has a unique positive word representing it. Therefore, for these four factors, we will sometimes make no distinction between the braid and the positive word.

Observe that if the infimum of a $3$-braid is $0$, then there is a unique word representing its normal form. We define the \emph{syllable length} of such a braid as the syllable length of that word. For instance, if $\beta = \sigma_1.\sigma_1\sigma_2.\sigma_2.\sigma_2.\sigma_2\sigma_1$, where the dots separate the factors in its normal form, then $\operatorname{sl}(\beta) = 3$, as $\beta= \sigma_1^2 \sigma_2^4 \sigma_1$.

The condition $(a_k a_{k+1}) \wedge \Delta = a_k$ in the definition of the left normal form is equivalent to the fact that $a_k$ is the largest positive prefix of $\Delta$ in any decomposition of $a_ka_{k+1}$ as a product of two positive prefixes of $\Delta$. In such a case, we say that the decomposition $a_ka_{k+1}$ is \emph{left weighted}. Note that in $\mathbb{B}_3$ two simple factors are left weighted if and only if the last letter of the first factor coincides with the first letter of the second factor. 

As a consequence of the previous discussion, a word representing a 3-braid is in normal form if and only if it belongs to one of the following families:
\begin{enumerate}[label=(\roman*)]
\item $\Delta^p \sigma_i^{k}$, with $p \in \mathbb{Z}$, $i \in \{1,2\}$ and $k \geq 0$;
\item $ \Delta^p \sigma_i^{k_1} \sigma_{j}^{k_2}\sigma_i^{k_3} \cdots  \sigma_j^{k_{2t}}$, with $p \in \mathbb{Z}$, $\{i,j\} = \{1,2\}$, $t \geq 1$, $k_1, k_{2t} \geq 1$ and  $k_2, \dots, k_{2t-1} \geq 2$;
\item $ \Delta^p \sigma_i^{k_1} \sigma_{j}^{k_2}\sigma_i^{k_3} \cdots  \sigma_i^{k_{2t+1}}$, with $p \in \mathbb{Z}$, $\{i,j\} = \{1,2\}$, $t \geq 1$, $k_1, k_{2t+1} \geq 1$ and $k_2, \dots, k_{2t} \geq 2$.
\end{enumerate} 

Observe that, in the above description, we are not indicating the simple factors of the normal form, but collecting powers of the generators instead.

There exist several classifications of 3-braids up to conjugation (see, for example, \cite[Prop. 2.1]{Murasugi_1974} and \cite[Prop. 3.2]{Truol_2023}). The next result provides a classification of 3-braids that will be useful in the proof of Theorem \ref{th:main}.

\begin{proposition}\label{prop:representatives_conjugacy_classes_5_families}
Every braid in $\mathbb{B}_3$ is conjugate to a braid in one of the following families:
\begingroup
\renewcommand{\arraystretch}{1.8}
$$
\begin{array}{l}
\Lambda_1= \left\{\Delta^p \ | \ p \in \mathbb{Z} \right\}, \\
\Lambda_2= \left\{\Delta^p\sigma_1^{k_1} \ | \ p\in \mathbb{Z},\ k_1>0\right\}, \\
\Lambda_3= \left\{\Delta^{2u}\sigma_1\sigma_2 \ | \ u \in \mathbb{Z} \right\}, \\
\Lambda_4= \left\{\Delta^{2u}\sigma_1^{k_1}\sigma_2^{k_2}\cdots \sigma_2^{k_{2t}} \ | \ u \in \mathbb{Z},\ t>0,\ k_1,\ldots,k_{2t}\geq 2\right\}, \\
\Lambda_5= \left\{\Delta^{2u+1}\sigma_1^{k_1}\sigma_2^{k_2}\cdots \sigma_1^{k_{2t+1}} \ | \ u \in \mathbb{Z},\ t>0,\ k_1,\ldots,k_{2t+1}\geq 2\right\}. 
\end{array}
$$
\endgroup
\end{proposition}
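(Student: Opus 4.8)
Here is how I would approach the proof.

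The plan is to replace $\beta$ by a conjugate lying in its summit set and then exploit the explicit description of left normal forms of $3$-braids recalled above. Since the summit infimum is attained (as noted after Definition~\ref{Th:left_normal_form}), pick a conjugate $\gamma$ of $\beta$ with $\inf(\gamma)=\inf_s(\beta)=:m$ and write its left normal form $\gamma=\Delta^m w$, where $w$ is a positive word of type (i), (ii) or (iii). If $w=1$ then $\gamma=\Delta^m\in\Lambda_1$. If $w$ is a power of a single generator, then, conjugating by $\Delta$ if needed (which, by $\sigma_i\Delta=\Delta\sigma_{3-i}$, interchanges the two generators and fixes $\Delta^m$), we land in $\Lambda_2$. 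If $w=\sigma_{i_1}\sigma_{i_2}$ is a single length-two simple factor, then $\Delta^m\sigma_{i_1}\sigma_{i_2}\in\Lambda_3$ when $m$ is even, while for $m$ odd one checks, again via $\sigma_i\Delta=\Delta\sigma_{3-i}$, that it is conjugate to $\Delta^m\sigma_1^2\in\Lambda_2$. So the substantial case is $w=\sigma_{i_1}^{k_1}\cdots\sigma_{i_s}^{k_s}$ with syllable length $s\geq 2$ and at least two simple factors; recall that then $k_1,k_s\geq 1$ and $k_2,\dots,k_{s-1}\geq 2$.

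The main dichotomy is the parity of $m+s$. If $m+s$ is odd I would perform a single reduction lowering $s$ by one. When $m$ is even (so $s$ is odd), conjugating $\gamma$ by $\sigma_{i_1}^{k_1}$ cyclically moves the first syllable to the right end, where it merges with the last syllable (both carry the same generator), giving $\Delta^{m}w''$ with $w''$ of type (ii) and $s-1$ syllables. When $m$ is odd (so $s$ is even), conjugating $\gamma$ by $\sigma_{i_s}^{k_s}$ and applying $\sigma_i\Delta=\Delta\sigma_{3-i}$ to transport the last syllable across $\Delta^m$ gives $\Delta^{m}w'$ with $w'$ of type (iii) and $s-1$ syllables. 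In either case the new end exponent is a sum of two positive integers, hence $\geq 2$, and the other exponents were middle exponents of $w$, hence $\geq 2$; so the new positive part has all exponents $\geq 2$. If $s-1=1$ we are in $\Lambda_2$; otherwise we have reached the situation where $m+s'$ is even and the positive part has all exponents $\geq 2$.

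The heart of the proof is then the case $m+s$ even with $s\geq 2$ and at least two simple factors, where I claim the end exponents satisfy $k_1,k_s\geq 2$. Suppose $k_1=1$. Then the first simple factor of $w$ is $\sigma_{i_1}\sigma_{i_2}$, and cycling $\gamma$ once — i.e., conjugating by $\Delta^m\sigma_{i_1}\sigma_{i_2}\Delta^{-m}$, which moves that factor to the right end, flipped by $\sigma_1\leftrightarrow\sigma_2$ exactly when $m$ is odd — yields a word in which the last simple factor of $w$ ($\sigma_{i_s}$ if $k_s\geq 2$, a length-two factor if $k_s=1$) multiplied by the factor just appended equals $\Delta$ up to at most one extra generator, via $\sigma_{i_1}\sigma_{i_2}\sigma_{i_1}=\sigma_{i_2}\sigma_{i_1}\sigma_{i_2}=\Delta$; it is precisely the hypothesis that $m$ and $s$ have the same parity that makes this occur. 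Moving the resulting $\Delta$ to the left exhibits a conjugate of $\beta$ of infimum $\geq m+1$, contradicting $m=\inf_s(\beta)$. The case $k_s=1$ is symmetric, applying the same argument to the image of $\gamma$ under the word-reversal anti-automorphism, which fixes $\Delta$ and preserves infima. Hence all exponents of the positive part are $\geq 2$, and conjugating by $\Delta$ if necessary so that it begins with $\sigma_1$ we conclude: $\Delta^{2u}\sigma_1^{k_1}\cdots\sigma_2^{k_{2t}}\in\Lambda_4$ when $m=2u$ (so $s=2t$) and $\Delta^{2u+1}\sigma_1^{k_1}\cdots\sigma_1^{k_{2t+1}}\in\Lambda_5$ when $m=2u+1$ (so $s=2t+1$).

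The step I expect to be most delicate is this parity/half-twist claim: one has to split into the sub-cases $k_s=1$ and $k_s\geq 2$, treat the short normal forms (for instance with only two simple factors, or $s=2$) separately, and verify carefully that the renormalization after cycling really produces a $\Delta$ factor rather than merely rearranging simple factors, so that the infimum strictly increases. The remaining ingredients — the reduction step and the final identification with the families $\Lambda_1,\dots,\Lambda_5$ — are routine manipulations using only $\Delta=\sigma_1\sigma_2\sigma_1=\sigma_2\sigma_1\sigma_2$, the relations $\sigma_i\Delta=\Delta\sigma_{3-i}$, and the description of $3$-braid normal forms recalled above.
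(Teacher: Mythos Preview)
Your proof is correct but takes a genuinely different route from the paper's. The paper does \emph{not} pass to the summit set first; instead it argues by induction on the total number of letters $n_\beta=k_1+\cdots+k_m$ in the non-$\Delta$ part of the left normal form. When $m-p$ is odd it performs the same syllable-merging conjugation you describe, landing directly in $\Lambda_2$, $\Lambda_4$ or $\Lambda_5$. When $m-p$ is even and an end exponent equals $1$, rather than deriving a contradiction, the paper exhibits an explicit conjugation that produces a new $\Delta$ factor and drops $n_\beta$ by $3$, then invokes the induction hypothesis. Thus the paper never appeals to the summit property; instead, the fact that the $\Lambda_i$ reach their summit infimum is recorded afterwards as a separate remark.

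Your approach trades this induction for a single conceptual step: the constraint $k_1,k_s\geq 2$ in the even-parity case is forced by the summit-infimum hypothesis via cycling. This is cleaner and makes the link to Garside theory transparent (indeed, it essentially \emph{proves} the content of the paper's Remark~\ref{remark:summit_infimum_Lambda_i} along the way). The paper's approach, on the other hand, is fully constructive: every case produces an explicit conjugator, and the authors later reuse exactly these conjugations to implement the classification in linear time (Proposition~\ref{prop:complexity}). So your argument is arguably more elegant for the proposition itself, while the paper's argument is tailored to the algorithmic application.
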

\begin{proof}
Let $\beta$ be a 3-braid with $\inf(\beta)=p$. Its left normal form will be $\Delta^p a_1\cdots a_{\ell}$, with $p \in \mathbb{Z}$ and $\ell \geq 0$.

Every simple factor $a_i$ can be written in a unique way as a positive braid word, and the last letter of $a_i$ is equal to the first letter of $a_{i+1}$ for every $i=1,\ldots,\ell-1$. Moreover, up to conjugation by $\Delta$ (which swaps $\sigma_1$ and $\sigma_2$), we can assume that the first letter of $a_1$ is $\sigma_1$. Hence $\beta$ can be written as
$$
    \Delta^p \sigma_{[1]}^{k_1}\sigma_{[2]}^{k_2}\cdots \sigma_{[m]}^{k_m},
$$
where $\sigma_{[i]}$ means $\sigma_1$ if $i$ is odd and $\sigma_2$ if $i$ is even. Furthermore, if $m>0$, then $k_1,k_m\geq 1$ and $k_2,\ldots,k_{m-1}\geq 2$.

We will show the result by induction on the number of letters $n_{\beta}=k_1+\cdots+k_m$ in the non-$\Delta$ part of the above expression. 

If $n_{\beta}=0$ then $\beta=\Delta^p \in \Lambda_1$ and the result holds. If $n_{\beta}=1$ then $\beta=\Delta^p\sigma_1\in\Lambda_2$. If $n_{\beta}=2$ then $\beta$ is equal to either $\Delta^p\sigma_1^2$ or $\Delta^p\sigma_1\sigma_2$. In the former case, $\beta\in \Lambda_2$. In the latter case, if $p$ is even $\beta \in \Lambda_3$, and if $p$ is odd we can conjugate it by $\sigma_2$ to obtain $\Delta^p \sigma_1^2$, which belongs to $\Lambda_2$.

Therefore, we can assume that $n_{\beta}\geq 3$ and that the result holds for braids $\gamma$ with $n_\gamma < n_\beta$. We distinguish the following cases:
\begin{enumerate}

\item If $m=1$, then $\beta\in \Lambda_2$.

\item If $m>1$, we have:
\begin{itemize}

 \item If $m-p$ is odd, then $\sigma_{[m]}$ conjugated by $\Delta^p$ equals $\sigma_{[1]}$. We can then conjugate $\beta$ by $\sigma_{[m]}^{k_m}$ and we obtain
     $$
       \gamma = \Delta^p \sigma_{[1]}^{k_1+k_m}\sigma_{[2]}^{k_2}\cdots \sigma_{[m-1]}^{k_{m-1}},
     $$
      which belongs to either $\Lambda_4$ (if $p$ is even), or $\Lambda_2$ (if $p$ is odd and $m=2$) or $\Lambda_5$ (if $p$ is odd and $m>2$).

  \item If $m-p$ is even, $k_1=1$ and $k_m=1$, then conjugating $\beta$ by $\sigma_{[m]}$ we create a new $\Delta$ factor, we obtain $\gamma=\Delta^{p+1}\sigma_{[2]}^{k_2-1}\sigma_{[3]}^{k_3}\cdots \sigma_{[m-1]}^{k_{m-1}}$ and $n_{\gamma} = n_{\beta} - 3$. 
               
  \item If $m-p$ is even, $k_1=1$ and $k_m>1$, we can conjugate $\beta$ by $\sigma_{[m]}$, creating a new $\Delta$ factor, obtaining $\gamma=\Delta^{p+1}\sigma_{[2]}^{k_2-1}\sigma_{[3]}^{k_3}\cdots \sigma_{[m-1]}^{k_{m-1}}\sigma_{[m]}^{k_m-1}$ and $n_{\gamma} = n_{\beta} - 3$.
  
  \item If $m-p$ is even, $k_1>1$ and $k_m=1$, then conjugating $\beta$ by $\sigma_{[m-1]}\sigma_{[m]}$, we create a new $\Delta$ factor, we get $\gamma=\Delta^{p+1}\sigma_{[1]}^{k_1-1}\sigma_{[2]}^{k_2}\cdots \sigma_{[m-1]}^{k_{m-1}-1}$ and $n_{\gamma} = n_{\beta} - 3$.
  
  \item If $m-p$ is even, $k_1>1$ and $k_m>1$, then $\beta\in \Lambda_4$ if $p$ is even, and $\beta\in \Lambda_5$ if $p$ is odd.

\end{itemize}
\end{enumerate} 
Therefore, every braid either belongs to some $\Lambda_i$ or can be conjugated to a braid $\gamma$ with $n_{\gamma} < n_{\beta}$. The result follows by induction hypothesis.
\end{proof}

\begin{remark}\label{remark:summit_infimum_Lambda_i}
It is straightforward to check that braids in the families $\Lambda_1$ to $\Lambda_5$ belong to their summit set, so they reach their summit infimum. One can see this by checking that iterated \emph{cycling} does not increase the infimum of the elements in each $\Lambda_i$ \cite{Elrifai_Morton_1994}.
\end{remark}

Observe that if we restrict Proposition \ref{prop:representatives_conjugacy_classes_5_families} to the monoid $\mathbb{B}_3^+$, the power of $\Delta$ in each of the families $\Lambda_i$ becomes a non-negative integer.



\section{Some highlights of  Khovanov homology and semiadequate links.}\label{sec:khovanov_homology_semi-adequate}

In this section, we give a brief description of Khovanov homology using the approach introduced by Viro in~\cite{Viro2_2004}. In addition, we review some properties concerning the Khovanov homology of semiadequate links, which will be useful in Section \ref{sec:L43-shape}. 

\subsection{Khovanov homology}\label{subsec:khovanov_homology} Let $D$ be a diagram of an oriented link, $c=c(D)$ its number of crossings, and assume that there is a fixed ordering for the crossings $x_1, \dots, x_c$  of $D$. Each crossing of $D$ can be \emph{smoothed} in two possible ways, $A$ or $B$, as shown in Figure \ref{fig:suavizado_A_B}. Given $s = (s_1, \dots, s_c) \in \{A,B\}^{c}$, we denote by $sD$ the system of circles obtained by performing a $s_k$-smoothing at crossing $x_k$. We will refer to $s$ (or possibly to $sD$) as a \emph{state}. It is common to denote the number of circles in $sD$ by $|sD|$, and the difference between the number of coordinates of $s$ being equal to $A$ and those being equal to $B$ by $\sigma(s)$.

\begin{figure}[h!]
\centering
\includegraphics[scale=0.6]{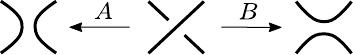}
\caption{$A$ and $B$-smoothing of a crossing.}
\label{fig:suavizado_A_B}
\end{figure}

For every state $s$, it is possible to assign a sign $\pm$ to each circle in $sD$, obtaining an \emph{enhancement} $S$ of~$s$. For every enhanced state $S$ (of $s$), we denote by $\tau(S)$ the difference between the number of circles labeled with $+$ and those labeled with $-$. We define the \textit{degrees}
\[ i(s)=i(S) = \frac{w(D)-\sigma(s)}{2} \quad \text{ and } \quad j(S)=\frac{3w(D)-\sigma(s)+2\tau(S)}{2}, \]
where $w(D)=p(D)-n(D)$ is the \emph{writhe} of $D$, with $p(D)$ and $n(D)$ the numbers of positive and negative crossings of $D$, respectively. The indices $i$ and $j$ are known as \emph{homological} and \emph{quantum index} (or \emph{grading}), respectively.

\begin{definition}\label{def:adjacent_state}
    Let $S$ and $T$ be enhanced states (of states $s$ and $t$, respectively) of an oriented link diagram $D$. We say that $T$ is \emph{adjacent} to $S$ if the following conditions are satisfied: 
    \begin{enumerate}[topsep=-1pt,itemsep=-1ex,partopsep=1ex,parsep=1ex]
\item $i(T)=i(S)+1$ and $j(T)=j(S)$.
\item The states $s$ and $t$ are identical except at one coordinate $k$ associated with the (change) crossing $x=x(s,t)$, where $s_k=A$ and $t_k=B$.
\item The signs assigned to the common circles in $sD$ and $tD$ are equal.
    \end{enumerate}
\end{definition}

As a consequence of Definition \ref{def:adjacent_state}, if $T$ is adjacent to $S$, then either two circles in $sD$ merge into a single circle in $tD$ or one circle in $sD$ splits into two circles in $tD$. The possibilities for the enhancements of the involved circles are $(++ \rightarrow +)$, $(+- \rightarrow -)$, $(-+ \rightarrow -)$, $(+ \rightarrow +-)$, $(+ \rightarrow -+)$ and $(- \rightarrow --)$. We will write $(S:T)=1$ if $T$ is adjacent to $S$ and $(S:T)=0$ otherwise. 

The \emph{Khovanov complex} can be set as follows: for every $i,j \in \mathbb{Z}$, define $C^{i,j}(D)$ as the free $\mathbb{Z}$-module with basis $\{ S \; | \; i(S)=i, j(S)=j \}$. Let us introduce the maps {$d^{i}: C^{i,j}(D) \longrightarrow C^{i+1,j}(D)$} given (on the generators) by $d^{i}(S) = \sum_T (-1)^{\kappa}(S:T)T$, where ${\kappa}$ is the number of $B$-coordinates of $S$ coming after the one that corresponds to the change crossing $x$. It turns out that $d^{i} \circ d^{i-1}=0$ and hence $(C^{*,*}(D),d^{*})$ is a chain complex. By construction, this can be seen as the direct sum (over $j$) of the family of subcomplexes $\{ (C^{*,j}(D),d^*) \}_{j \in \mathbb{Z}}$. 

Khovanov proved that the homology groups $H^{*,*}(D)$ of the Khovanov complex are link invariants~\cite[Sec. 5]{Khovanov_2000}. This allows us to write $H^{*,*}(L)=H^{*,*}(D)$, where $D$ is any diagram representing $L$. These groups are known as the \emph{Khovanov homology groups} of the link.

Given an oriented link diagram $D$, we set \[ j_{\min }(D)=\min \{j(S) \mid S \text { is an enhanced state of } D\} . \]
Similarly, one can define $j_{\max }(D)$. Let $s_A$ (resp. $s_B$) denote the state associating an $A$ label (resp. $B$ label) to every crossing of $D$.

\begin{proposition}[{\cite[Cor. 4.2]{Gonzalez-Meneses_Manchon_Silvero_2018}}]\label{prop:j_min_j_max}
    Let $D$ be an oriented link diagram with $c$ crossings, $n$ negative and $p$ positive. Then $j_{\min }(D)=c-3 n-\left|s_A D\right|$ and $j_{\max }(D)=-c+3 p+\left|s_B D\right|$.
\end{proposition}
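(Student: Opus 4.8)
The plan is to reduce both equalities to an elementary optimization over states and their enhancements. First I would substitute $w(D)=p-n$ into the definition $j(S)=\frac{3w(D)-\sigma(s)+2\tau(S)}{2}$, so that $j(S)$ becomes an explicit affine function of $\sigma(s)$ and $\tau(S)$ only. For a fixed state $s$, an enhancement labels each of the $|sD|$ circles independently by $+$ or $-$, so $\tau(S)$ runs over $\{-|sD|,\,-|sD|+2,\,\dots,\,|sD|\}$; in particular it attains the values $-|sD|$ (all circles $-$) and $|sD|$ (all circles $+$). Hence
\[
j_{\min}(D)=\min_{s\in\{A,B\}^c}\frac{3w(D)-\sigma(s)-2|sD|}{2},
\qquad
j_{\max}(D)=\max_{s\in\{A,B\}^c}\frac{3w(D)-\sigma(s)+2|sD|}{2},
\]
and it only remains to optimize these over states $s$.

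The core of the argument is a monotonicity observation. If two states $s,t$ differ in exactly one coordinate $k$ with $s_k=A$ and $t_k=B$, then $\sigma(t)=\sigma(s)-2$, while $sD$ and $tD$ differ by a single surgery at the crossing $x_k$, so $|tD|=|sD|\pm 1$ — this is precisely the merge/split dichotomy recorded after Definition~\ref{def:adjacent_state}. Consequently, switching an $A$-smoothing to a $B$-smoothing changes $\sigma(s)+2|sD|$ by $-2\pm 2\in\{0,-4\}$, hence never increases it, and changes $2|sD|-\sigma(s)$ by $2\pm 2\in\{0,4\}$, hence never decreases it. Since every state is reached from $s_A$ by a sequence of such switches, and $s_B$ is reached from every state by further such switches, we conclude $\sigma(s)+2|sD|\le\sigma(s_A)+2|s_AD|$ and $2|sD|-\sigma(s)\le 2|s_BD|-\sigma(s_B)$ for all $s$. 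Thus the minimum defining $j_{\min}(D)$ is attained at $s=s_A$, and the maximum defining $j_{\max}(D)$ at $s=s_B$.

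To finish I would simply evaluate at these two states. Since $s_A$ is the all-$A$ state, $\sigma(s_A)=c$; since $s_B$ is the all-$B$ state, $\sigma(s_B)=-c$. Substituting, and using $w(D)=p-n$ together with $c=p+n$ for the last equality on each line,
\[
j_{\min}(D)=\frac{3(p-n)-c-2|s_AD|}{2}=p-2n-|s_AD|=c-3n-|s_AD|
\]
and
\[
j_{\max}(D)=\frac{3(p-n)+c+2|s_BD|}{2}=2p-n+|s_BD|=-c+3p+|s_BD|,
\]
which are the two asserted formulas.

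I do not expect a serious obstacle here: the whole proof is bookkeeping with the degree formula together with the one structural input $|tD|=|sD|\pm1$. The only point deserving a line of care is precisely that structural input — that switching the smoothing at a single crossing must either merge two circles of $sD$ into one or split one circle into two, and can never leave the circle count unchanged. This is the standard fact underlying the cube of resolutions (and is already stated in the paper); a self-contained verification is a local check near the crossing, according to whether the two strands passing through it belong to the same circle of $sD$ or to two distinct ones.
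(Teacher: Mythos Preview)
Your argument is correct. The paper does not supply its own proof of this proposition: it is stated as a citation of \cite[Cor.~4.2]{Gonzalez-Meneses_Manchon_Silvero_2018} and used as a black box, so there is no in-paper proof to compare against. Your derivation --- reducing to the optimization of $\sigma(s)\pm 2|sD|$ over states and using the single-crossing merge/split dichotomy to obtain the required monotonicity --- is the standard elementary argument and matches the one given in the cited reference.
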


Notice that $j_{\min}(D)$ is the minimal value of $j$ for which the complex $(C^{*,j}(D),d^*)$ is non-trivial, and its value depends on the precise diagram of the link. Indeed, given two diagrams $D$ and $D'$ representing the same link, one might have $j_{\min}(D)\neq j_{\min}(D')$. 

If we define $\underline{j}(D)$ as the minimum value of $j$ such that there exists a non-trivial group $H^{i, j}(D)$, then $j_{\min}(D) \leq \underline{j}(D)$. Since Khovanov homology is a link invariant, the value $\underline{j}(D)$ does not depend on the chosen diagram $D$ representing the link $L$, and therefore we can write $\underline{j}(L)=\underline{j}(D)$. In a similar manner, we can define $\underline{i}(L)=\underline{i}(D) = \min\{i \mbox{ } | \mbox{ } H^{i,j}(D) \neq 0\}$. 

Next we briefly recall the long exact sequence on Khovanov homology introduced by Viro in {\cite[Sec.~6.2]{Viro2_2004}}, which will be a key tool in the proof of Theorem \ref{th:main}. Given a crossing $e$ of a link diagram $D$, we denote by $D_A$ and $D_B$ the two diagrams obtained from $D$ by smoothing the crossing $e$ following an $A$ and $B$ label, respectively. There is a long exact sequence relating the (so-called framed) Khovanov homology of the three diagrams. 

In order to rewrite the long exact sequence in terms of (original) Khovanov homology, one needs to take into account the writhes of the involved diagrams. The gradings in the resulting sequence depend on the sign of the smoothed crossing $e$. In particular, if $e$ is positive, then $D_A$ inherits its orientation; however, one should choose an orientation for the diagram $D_B$. Then the long exact sequence  becomes (see {\cite[Lem. 2.2]{Mukherjee_Przytycki_Silvero_Wang_Yang_2017}}):\begin{equation}\label{slab_exact_sequence}     
\begin{array}{rlclclc}
\cdots & \longrightarrow H^{\frac{w\left(D_B\right)-w(D)-3}{2}+i, \frac{3\left(w\left(D_B\right)-w(D)\right)-1}{2}+j}\left(D_B\right) & \longrightarrow & H^{i-1, j}(D) & \longrightarrow & H^{i-1, j-1}\left(D_A\right) \\
& \longrightarrow H^{\frac{w\left(D_B\right)-w(D)-1}{2}+i, \frac{3\left(w\left(D_B\right)-w(D)\right)-1}{2}+j}\left(D_B\right) & \longrightarrow & H^{i, j}(D) & \longrightarrow & H^{i, j-1}\left(D_A\right) \\
& \longrightarrow H^{\frac{w\left(D_B\right)-w(D)+1}{2}+i, \frac{3\left(w\left(D_B\right)-w(D)\right)-1}{2}+j}\left(D_B\right) & \longrightarrow & \cdots \quad .
\end{array} 
    \end{equation}

\subsection{Semiadequate links} 
In \cite{Lickorish_Thistlethwaite_1988} Lickorish and Thistlethwaite introduced  (semi)adequate links as a genaralization of alternating links in the setting of their proof of the First Tait Conjecture. 

\begin{definition}
    A link diagram $D$ is \emph{$A$-adequate} (resp. \emph{B-adequate}) if for each crossing $e$ the two arcs obtained when performing an $A$-smoothing (resp. $B$-smoothing) to $e$ belong to different circles in $s_AD$ (resp. $s_BD$). The diagram $D$ is called \emph{semiadequate} if it is either $A$-adequate or $B$-adequate. If $D$ is both $A$-adequate and $B$-adequate, it is said to be \emph{adequate}. A link is said to be \emph{(semi)adequate} if it admits a (semi)adequate diagram.
\end{definition}

Examples of $A$-adequate diagrams include positive diagrams. Reduced alternating diagrams are adequate. 

Recall that $j_{\min}(D) \leq \underline{j}(D)$. If $D$ is $A$-adequate, the equality holds and $H^{*, j_{\min}}(D) = H^{-n, j_{\min}}(D) = \mathbb{Z}$, with $n$ the number of negative crossings\footnote{$A$-adequate diagrams minimize the number of negative crossings over all diagrams representing a link \cite[Cor. 5.14]{Lickorish_1997}.} in $D$. Khovanov homology of (semi)adequate links has been widely studied; see for example \cite{Asaeda_Przytycki_2004, Dasbach_Lowrance_2020, Przytycki_Silvero_2020}.


\section{Generating alternating diagrams of rational links}\label{sec:rational_links}

Rational links, also known as $2$-bridge links, are those that can be represented by a diagram having two minima and two maxima. Given $a_1, \ldots, a_m \in \mathbb{Z}$, we write $D(a_1, \ldots, a_m)$ for the diagram of a rational link depicted in Figure \ref{fig:standard_diagram_rational}, where a box labeled by $a_i \in \mathbb{Z}$ represents a sequence of $a_i$ twists as in Figure~\ref{fig:twist_positivo} if $a_i\geq0$, or $|a_i|$ twists as in Figure~\ref{fig:twist_negativo} if $a_i<0$. Observe that the parity of $m$ determines whether the diagram aligns with \ref{fig:racional_impar} or with \ref{fig:racional_par}. We will refer to these diagrams as \emph{standard rational diagrams}.

\begin{figure}[h!]
\centering
\begin{subfigure}[b]{.2\textwidth}
  \centering
  \includegraphics[width=.5\linewidth]{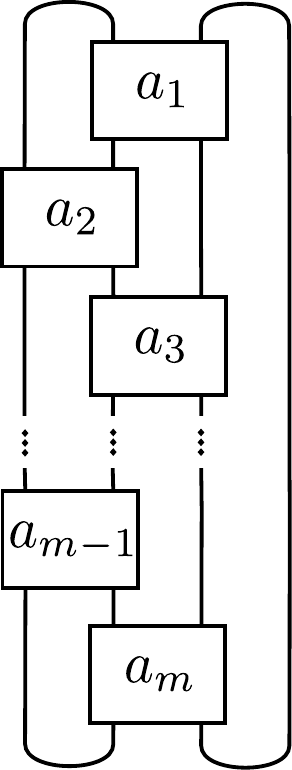}
  \caption{$m$ odd}\label{fig:racional_impar}
\end{subfigure}
{\hspace{0.5cm}}
\begin{subfigure}[b]{.2\textwidth}
  \centering
  \includegraphics[width=.5\linewidth]{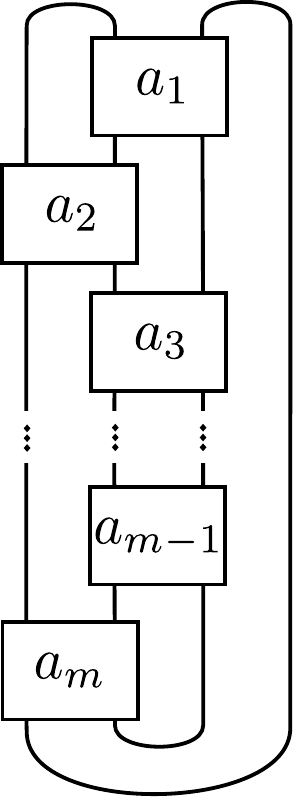}
  \caption{$m$ even}\label{fig:racional_par}
\end{subfigure}%
{\hspace{0.5cm}}
\begin{subfigure}[b]{.2\textwidth}
  \centering
  \includegraphics[width=.8\linewidth]{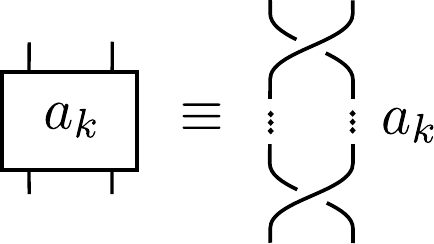}
  \caption{$a_k \geq 0$}\label{fig:twist_positivo}
\end{subfigure}%
{\hspace{0.5cm}}
\begin{subfigure}[b]{.2\textwidth}
  \centering
  \includegraphics[width=.8\linewidth]{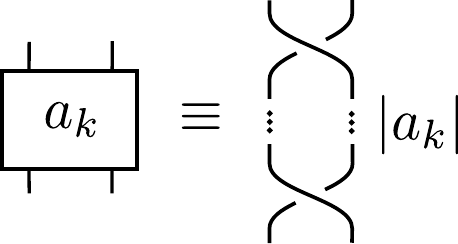}
  \caption{$a_k < 0$}\label{fig:twist_negativo}
\end{subfigure}%
\caption{Standard diagram of a rational link}\label{fig:standard_diagram_rational}
\end{figure}

\begin{remark}
    We will assume the convention $D(a_1, \dots, a_{i-1},0,a_{i+1}, \dots, a_m)=D(a_1, \dots, a_{i-1}+a_{i+1}, \dots, a_m).$
\end{remark}

It is well known that rational links are alternating \cite{Bankwitz_Schumann_1934,Goeritz_1934}. If we assume that every $a_i \neq 0$, standard rational diagrams are alternating if and only if $a_{i}a_{i+1} < 0$ for every $i=1, \dots, m-1$. In Proposition~\ref{prop:w(D_R)_w(D_R')}, we transform any given diagram $D=D(a_1, \ldots, a_m)$ with $a_i \geq 2$ into an equivalent alternating standard rational diagram $D'$, and determine the number of positive and negative crossings of $D'$ in terms of those of $D$. This result will be crucial in the proof of Theorem \ref{th:main}.

Proposition \ref{prop:w(D_R)_w(D_R')} will be based on two transformations that we introduce in the following lemmas.

\begin{lemma}\label{lemma:U_transformation}
    Let $D=D(a_1, \dots, a_m)$ be a standard rational diagram with $m > 1$ and $a_1,a_2 \geq 1$. Then, the diagram
$$
    \mathbf{U}(D) = D(a_1-1,-1, a_2-1, a_3, \dots, a_m)  \\
$$
   is equivalent to $D$. We say that $\mathbf{U}(D)$ is obtained from $D$ by applying a $\mathbf{U}$-transformation.
\end{lemma}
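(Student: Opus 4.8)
The claim is a local equivalence of rational tangles, so the natural strategy is to verify it directly from the calculus of rational tangles (or, equivalently, by an explicit sequence of Reidemeister moves on the twist regions).

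\medskip

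\textbf{Plan.} The statement $\mathbf{U}(D) = D(a_1-1,-1,a_2-1,a_3,\dots,a_m)$ is equivalent to $D(a_1,\dots,a_m)$ asserts that the partial rational tangle built from the first two boxes, namely the ``$a_1$ vertical twists then $a_2$ horizontal twists'' tangle, equals the tangle ``$(a_1-1)$ vertical twists, one $(-1)$ twist, $(a_2-1)$ vertical... '' — in short, it is a statement about a single rational tangle with two twist regions, and it extends to the whole diagram because rational tangles can be combined without interference (the boxes $a_3,\dots,a_m$ are left untouched). So the first thing I would do is reduce to the case $m=2$, i.e. prove the tangle identity
\[
T(a_1,a_2) \;=\; T(a_1-1,-1,a_2-1),
\]
where $T(\cdots)$ denotes the rational tangle obtained by the indicated twisting pattern, and then observe that plugging this identical tangle into the rest of the diagram $D(\cdot,\cdot,a_3,\dots,a_m)$ yields the claimed equivalence of the full diagrams.

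\medskip

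\textbf{Key steps.} (1) Set up the picture precisely: in $D(a_1,a_2,\dots)$ the box $a_1$ is a sequence of $a_1$ crossings in one twist region (say vertical) and $a_2$ is a sequence in the orthogonal region, with the convention from Figures~\ref{fig:twist_positivo}--\ref{fig:twist_negativo}. (2) Perform a single flype / isotopy that slides one crossing out of the $a_1$-region: pull one of the $a_1$ crossings down past the branch point so that it becomes an isolated crossing sitting between the two twist regions; this converts $a_1$ crossings in the first region into $a_1-1$ crossings there plus one crossing in the middle. (3) The sign of that extracted crossing is opposite to that of the remaining ones because of the way the strands reconnect at the corner of the tangle — this is exactly the $-1$ in the notation — and the same manoeuvre absorbs/releases one crossing from the $a_2$ region, turning $a_2$ into $a_2-1$. (4) Finally, check that the planar position of the four tangle endpoints is preserved throughout, so the identity of tangles (not merely of closures) holds, and hence it can be substituted inside $D(a_1,\dots,a_m)$ without altering $a_3,\dots,a_m$. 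A clean alternative for step (2)--(3) is the fraction calculus: a rational tangle is classified by its continued-fraction value, and one checks that both $T(a_1,a_2)$ and $T(a_1-1,-1,a_2-1)$ have the same associated continued fraction, invoking Conway's theorem that equal fractions imply isotopic tangles.

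\medskip

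\textbf{Main obstacle.} The genuine difficulty is purely bookkeeping: getting the crossing \emph{signs} and the \emph{orientation of the twist regions} (vertical versus horizontal, and the over/under convention) exactly right, so that what comes out really is a ``$-1$'' box in the middle and the counts become $a_1-1$ and $a_2-1$ rather than, say, $a_1-1$ and $a_2+1$, or a box of the wrong sign. Since the paper needs to track positive versus negative crossings carefully (this is the whole point of the later Proposition~\ref{prop:w(D_R)_w(D_R')}), the proof should be done with an explicit labelled figure showing the isotopy, and the verification that the endpoint configuration is unchanged should be stated explicitly. Everything else — that isotopy of tangles is preserved under substitution into a larger rational diagram, and that the $\mathbf{U}$-move therefore gives an equivalent link — is routine.
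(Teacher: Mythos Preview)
Your reduction to a local tangle identity is where the proposal breaks. You claim that the sub-tangle built from the first two boxes, $T(a_1,a_2)$, is isotopic \emph{rel boundary} to the sub-tangle $T(a_1-1,-1,a_2-1)$, so that the remaining boxes $a_3,\dots,a_m$ are ``left untouched''. This is false in the relevant sense. A quick sanity check with the continued-fraction invariant: for $a_1=a_2=3$ one gets $3+\tfrac{1}{3}=\tfrac{10}{3}$ for the two-box tangle, while $2+\tfrac{1}{-1+\tfrac{1}{2}}=0$ for the three-box tangle; so Conway's theorem gives \emph{non}-isotopic tangles, and your alternative route (b) would actually disprove the tangle identity you are trying to establish. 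The underlying reason is that passing from $m$ to $m+1$ boxes flips the parity of the diagram (note the paper uses two different templates in Figure~\ref{fig:racional_impar}--\ref{fig:racional_par} according to the parity of $m$): the twist-region orientations of $a_3,\dots,a_m$ are \emph{not} preserved when you insert an extra box at the front, so the complementary tangle is not the same in $D$ and in $\mathbf{U}(D)$.

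The paper's proof accordingly does \emph{not} localize to the first two boxes. It performs a global isotopy of the closed link: after peeling one crossing off the $a_1$ region, the key step is an ``opening a book'' move in which the \emph{entire} diagram (everything except one closure arc joining the $a_1$ and $a_m$ regions) is flipped over. That global flip is exactly what exchanges horizontal and vertical twist regions throughout and accounts for the parity change from $m$ to $m+1$. Your step (4), ``check that the four tangle endpoints are preserved'', is precisely the step that fails for a purely local move and is what forces the global argument. If you want a non-pictorial alternative, you would need to work at the level of the closed $2$-bridge link (Schubert normal form) rather than at the level of tangles, and then argue separately that the resulting identification is realised by an isotopy whose effect on crossing signs can be tracked --- which is the real content needed later for Proposition~\ref{prop:w(D_R)_w(D_R')}.
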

\begin{proof}
In Figure \ref{fig:U_transformation} we describe a $\mathbf{U}$-transformation as a combination of simpler transformations representing isotopies. Observe that in the third transformation, which resembles the action of ``opening a book'' as if the grey rectangle $R$ were the cover, $R$ consists of the entire diagram except a part of the arc connecting the boxes labeled by $a_1$ and $a_m$.  
\end{proof}
    \begin{figure}[h!]
        \centering
        \includegraphics[width=0.8\linewidth]{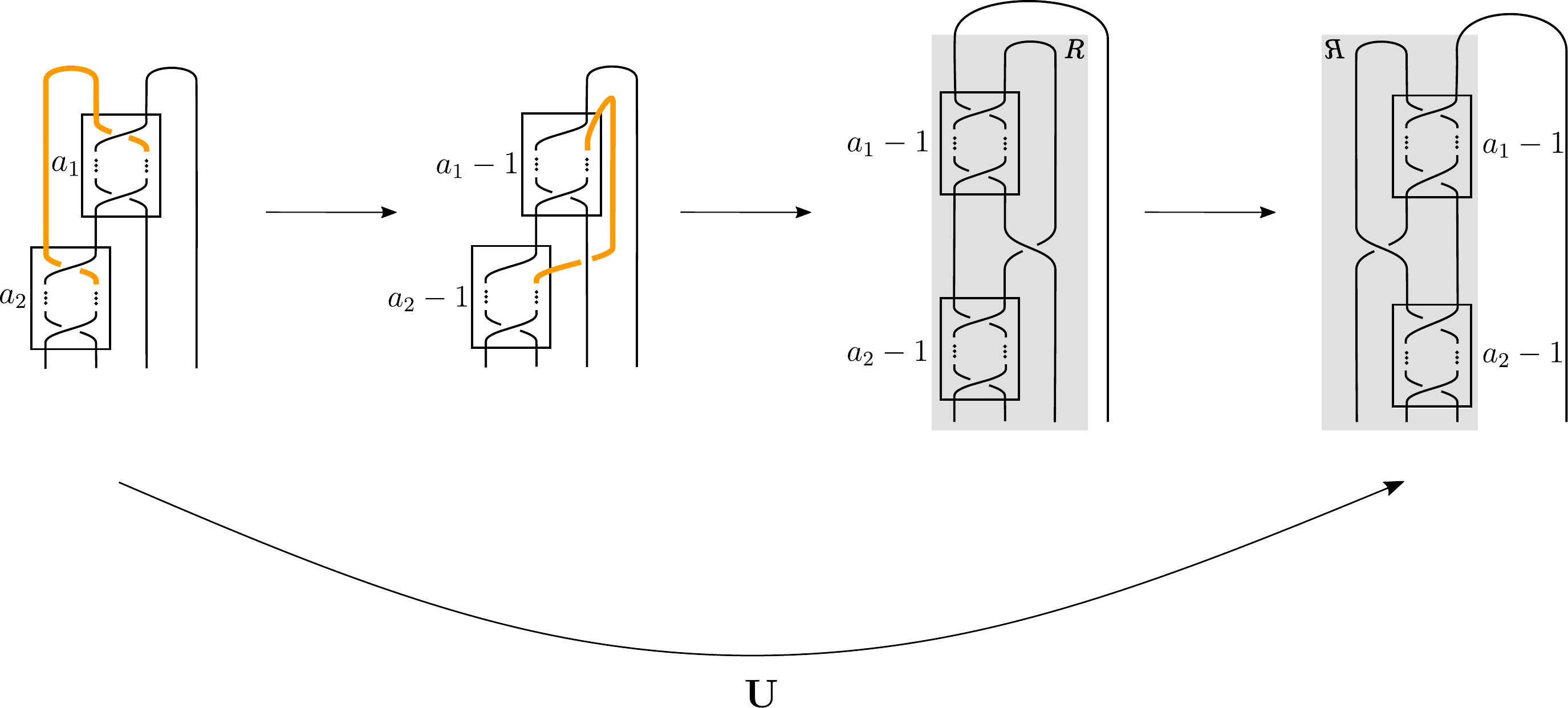}
        \caption{A description of the $\mathbf{U}$-transformation illustrating proof of Lemma~\ref{lemma:U_transformation}.}
        \label{fig:U_transformation}
    \end{figure}

\begin{lemma}\label{lemma:T_transformation}
    Let $D = D(a_1, \dots, a_{i-1}, -1, a_{i+1}, a_{i+2}, \dots, a_m)$ be a standard rational diagram with $m> 3$ and $i \in \{ 2, \dots, m-2\}$, such that $a_{i-1},a_{i+1},a_{i+2} \geq 1$. Then, the diagram  
$$
    \mathbf{T}_i(D) = D(a_1,\dots, \, a_{i-1}+1, \, a_{i+1}, \, -1, \,a_{i+2}-1, \dots, \, a_m)  \\
$$
   is equivalent to $D$. We say that $\mathbf{T}_i(D)$ is obtained from $D$ by applying a $\mathbf{T}_i$-transformation.
\end{lemma}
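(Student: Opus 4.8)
The plan is to argue, just as in the proof of Lemma~\ref{lemma:U_transformation}, by exhibiting the $\mathbf{T}_i$-transformation as a composition of planar isotopies and well-understood local moves on the standard rational diagram. The key observation is that a box labeled $-1$ sitting between boxes $a_{i-1}$ (a ``horizontal'' twist region) and $a_{i+1}, a_{i+2}$ (the next ``vertical'' and ``horizontal'' twist regions) is precisely a single clasp that can be slid across the tangle, converting one crossing of the $a_{i+1}$-box into an extra crossing of the $a_{i-1}$-box and splitting off a new $-1$-box further along. Concretely, I would first isolate the subtangle $D(a_{i-1}, -1, a_{i+1}, a_{i+2})$ inside the big diagram: since $i \in \{2,\dots,m-2\}$, this is an honest rational subtangle with the rest of the diagram (call it $R$, as in Lemma~\ref{lemma:U_transformation}) attached along two arcs, and every move below will be supported in a ball meeting $\partial$ in four points, so it suffices to check the tangle identity
\[
D(a_{i-1},\,-1,\,a_{i+1},\,a_{i+2}) \;=\; D(a_{i-1}+1,\,a_{i+1},\,-1,\,a_{i+2}-1)
\]
as tangles with the same four endpoints.

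The main step is this local tangle identity, and I would prove it by the standard ``flype/clasp-pass'' picture: a $-1$ box between two consecutive twist regions of opposite type is a negative clasp which can be pulled through the crossings of the adjacent twist region one at a time. Pulling it leftward through the $a_{i+1}$-box absorbs one of those crossings into the $a_{i-1}$-box (hence $a_{i-1} \rightsquigarrow a_{i-1}+1$ and what remains of the $a_{i+1}$-box stays as $a_{i+1}$ after reindexing), and continuing to push the clasp past the now-adjacent $a_{i+2}$-box leaves behind a $-1$-box and reduces $a_{i+2}$ by one. This is exactly the kind of isotopy depicted for the $\mathbf{U}$-transformation in Figure~\ref{fig:U_transformation}, and I would present a companion figure showing the sequence of local moves (an ``opening the book'' move on the rectangle $R$, followed by a finger move sliding the clasp, followed by the reverse book move). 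The hypotheses $a_{i-1},a_{i+1},a_{i+2}\ge 1$ guarantee that each intermediate diagram is still a legitimate standard rational diagram (no box acquires a forbidden configuration), and $m>3$ with $i\le m-2$ guarantees that the $a_{i+2}$-box actually exists and is not the terminal box, so the slide is well defined.

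The part I expect to require the most care is not the topology — the isotopy itself is routine once the figure is drawn — but rather the bookkeeping of \emph{which} box is ``vertical'' and which is ``horizontal'' at each stage, since the standard diagram alternates the two types of twist region and the convention for reading off $D(a_1,\dots,a_m)$ (Figures~\ref{fig:racional_impar}--\ref{fig:racional_par}) depends on the parity of the index; I would need to check that the indices $i-1$, $i+1$, $i+2$ sit in the right alternating pattern so that the asserted output $D(a_1,\dots,a_{i-1}+1,a_{i+1},-1,a_{i+2}-1,\dots,a_m)$ is genuinely in standard form and not off by a mirror. Once that indexing is pinned down, the proof is completed by observing that the move is supported away from $R$, so it extends from the tangle identity to the full diagram, exactly as in Lemma~\ref{lemma:U_transformation}.
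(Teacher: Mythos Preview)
Your proposal is correct and takes essentially the same approach as the paper: both prove the lemma by exhibiting the $\mathbf{T}_i$-transformation as an explicit planar isotopy, supported in a neighbourhood of the four relevant boxes, with separate attention to the parity of $i$ (which governs whether the adjacent box is ``horizontal'' or ``vertical''). The paper's proof is literally a reference to a figure split into the two parity cases, and your plan---isolate the subtangle $D(a_{i-1},-1,a_{i+1},a_{i+2})$, verify the local tangle identity by a clasp-slide picture, then extend to the full diagram---is exactly the content such a figure encodes. Your closing remark about the parity bookkeeping being the only delicate point matches precisely what the paper addresses by drawing the even and odd cases separately. One small correction: when $i=m-2$ the box $a_{i+2}$ \emph{is} the terminal box, but this causes no difficulty for the isotopy.
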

\begin{proof}
In Figure \ref{fig:transformation_T_i} we describe a $\mathbf{T}_i$-transformation as a combination of simpler transformations representing isotopies. If $i$ is even (resp. odd), the box labeled $a_{i-1}$ is located at the right (resp. left), as shown in case \ref{fig:T_i_even_transformation} (resp. case \ref{fig:T_i_odd_transformation}).  
\end{proof}
     \begin{figure}[!ht]
     \centering
        \begin{subfigure}{.47\textwidth}
            \centering
            \includegraphics[width=\linewidth]{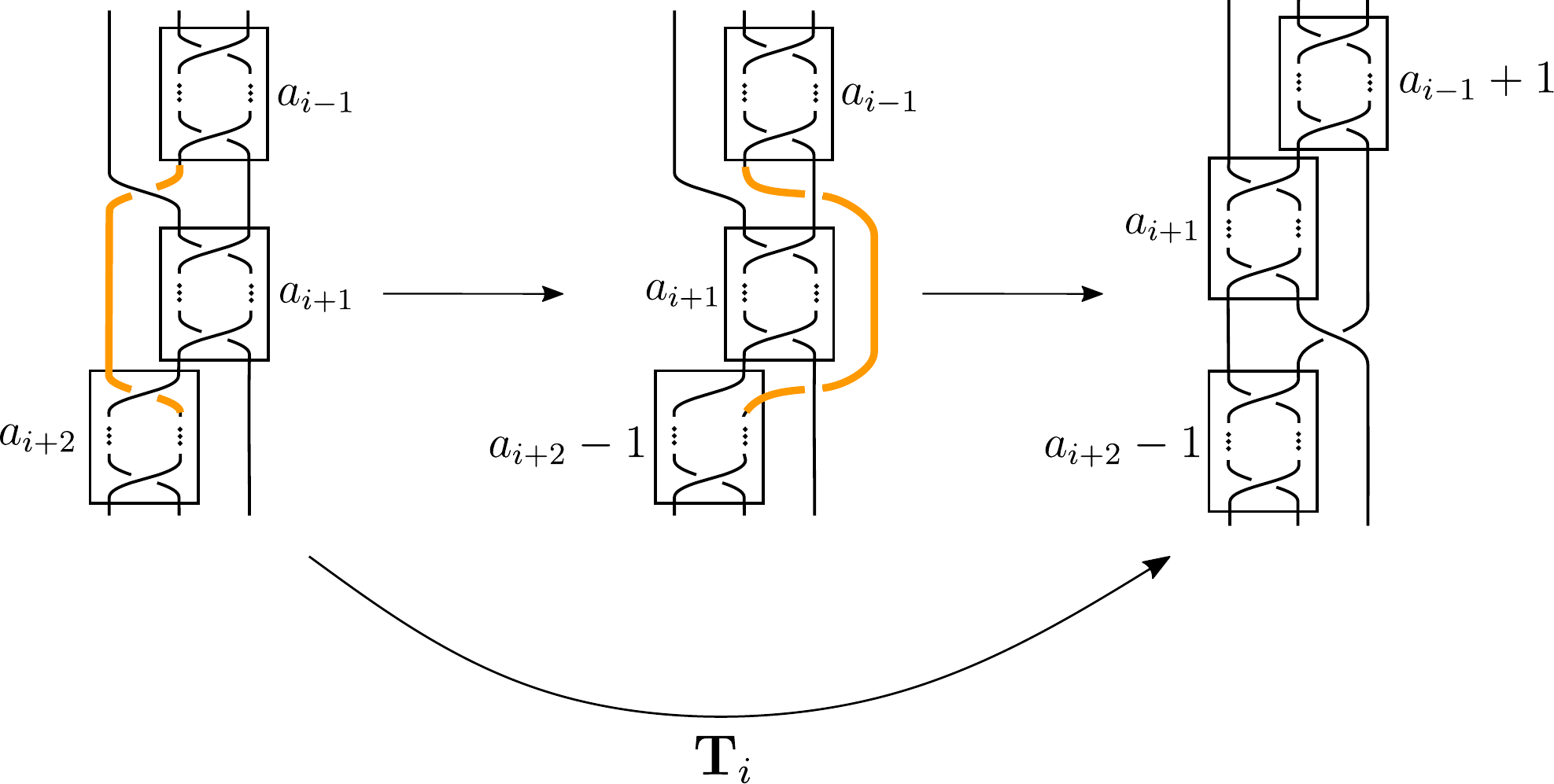}
            \caption{$i$ even}\label{fig:T_i_even_transformation}
        \end{subfigure}%
        {\hspace{0.3cm}}
        \begin{subfigure}{.47\textwidth}
            \centering
            \includegraphics[width=\linewidth]{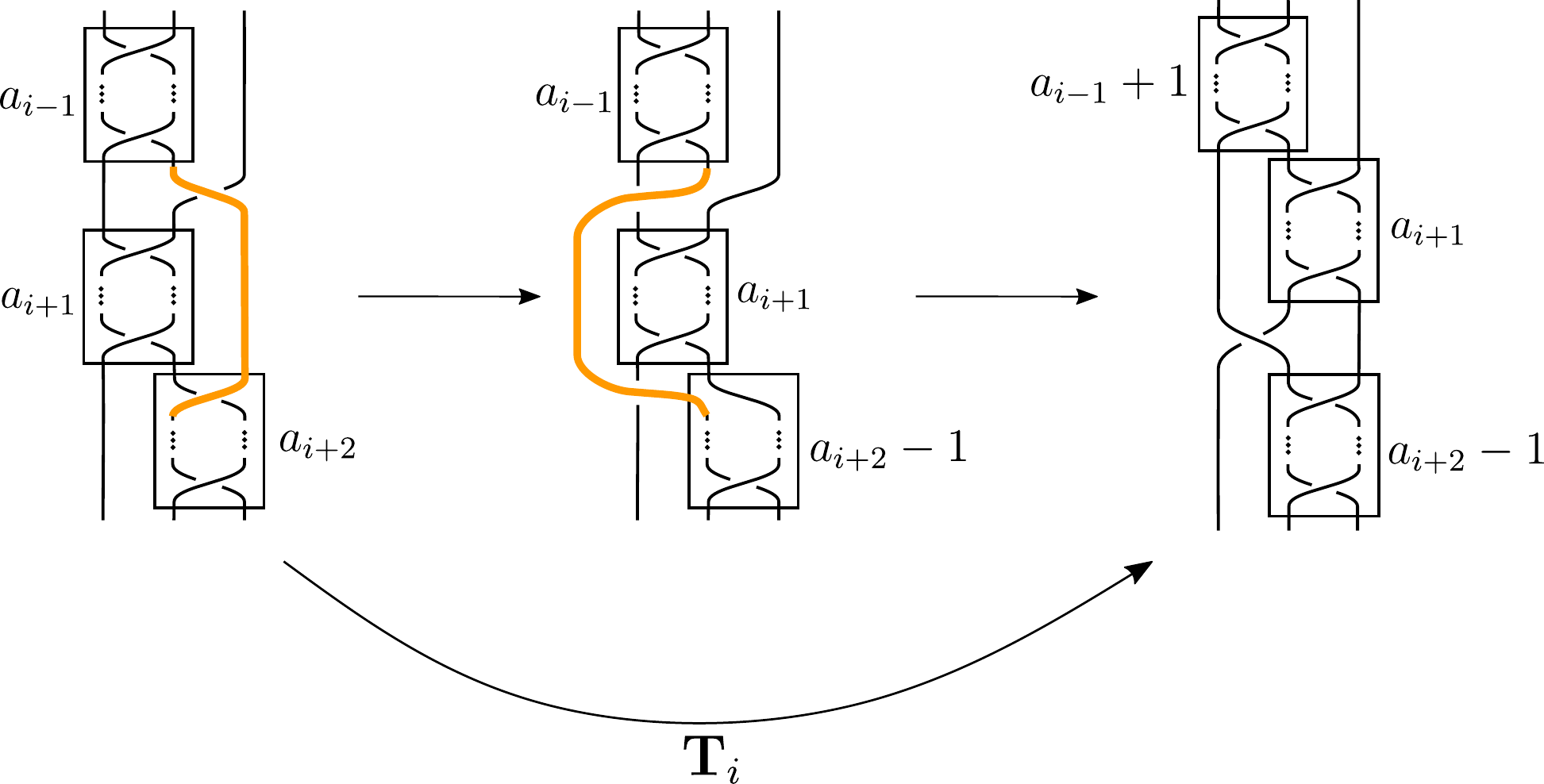}
            \caption{$i$ odd}\label{fig:T_i_odd_transformation}
        \end{subfigure}%
     \caption{A description of the $\mathbf{T}_i$-transformation illustrating proof of Lemma~\ref{lemma:T_transformation}.}\label{fig:transformation_T_i}
    \end{figure}

\begin{proposition}\label{prop:w(D_R)_w(D_R')}
    Let $D=D(a_1, \dots, a_m)$ be a standard rational diagram with $m>1$ and $a_1, \dots, a_m \geq 2$, and let $D'=D(a_1-1, -1, a_2-2, -1, a_3-2, -1, \dots, a_{m-1}-2,-1,a_m-1)$. Then, $D'$ is an alternating diagram equivalent to $D$ and the following relations hold: $p(D')=p(D)$, $n(D')=n(D)-(m-1)$ and $w(D')=w(D)+(m-1)$.
\end{proposition}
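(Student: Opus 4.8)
The plan is to obtain $D'$ from $D$ by an iterated application of the two transformations from Lemmas~\ref{lemma:U_transformation} and \ref{lemma:T_transformation}, carefully tracking the effect on positive and negative crossings at each step. First I would apply a single $\mathbf{U}$-transformation to $D = D(a_1, \dots, a_m)$, which is legitimate since $m > 1$ and $a_1, a_2 \geq 2 \geq 1$, producing
\[
D(a_1 - 1, -1, a_2 - 1, a_3, \dots, a_m).
\]
Then I would repeatedly apply $\mathbf{T}_i$-transformations to ``push'' the $-1$ box rightwards through the diagram: a $\mathbf{T}_2$ turns $D(a_1-1,-1,a_2-1,a_3,a_4,\dots)$ into $D(a_1-1,a_2,-1,a_3-1,a_4,\dots)$, then (after noting $a_2 - 1 \geq 1$ so another $\mathbf{U}$ applies, or by re-examining the indices) continue with the next transformation, and so on, each time converting one more entry $a_k$ into $a_k - 2$ and inserting a new $-1$ immediately before it, while leaving a trailing $-1$ behind. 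I would formalize this as an induction on $m$: the inductive step peels off the processing of $a_2$, reducing to the case of a shorter sequence.

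The bookkeeping on crossings is the heart of the argument, so I would be explicit about the sign conventions. A box labeled $a \geq 0$ contributes $a$ crossings that, in the standard rational diagram, all have the same sign; a box labeled $-1$ contributes a single crossing of the opposite sign. With the orientation induced on the rational link, the positive boxes $a_1, \dots, a_m$ contribute only negative crossings (matching the fact that in $D(a_1,\dots,a_m)$ with all $a_i \geq 2$ one has $p(D) = 0$ before any cancellation — or, more carefully, that the crossings coming from the $a_i$ all carry one sign and the inserted $-1$'s carry the other). Passing from $D$ to $D'$, the total tally of ``$a_i$-type'' crossings drops by $1 + 2(m-2) + 1 = 2(m-1)$ (one from $a_1 \mapsto a_1 - 1$, one from $a_m \mapsto a_m - 1$, and two from each of $a_2, \dots, a_{m-1}$), while $m - 1$ new $-1$ boxes are introduced, contributing $m - 1$ crossings of the opposite sign. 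I would check that the $\mathbf{U}$- and $\mathbf{T}_i$-transformations, being isotopies realized by the ``opening a book'' moves in Figures~\ref{fig:U_transformation} and \ref{fig:transformation_T_i}, do not themselves create or destroy crossings beyond those visible in the relabeling; hence each such move changes $(p, n)$ in a predictable way, and summing gives $p(D') = p(D)$ and $n(D') = n(D) - (m-1)$, whence $w(D') = p(D') - n(D') = w(D) + (m-1)$.

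Finally I would verify that $D'$ is alternating. By the criterion recalled just before Proposition~\ref{prop:w(D_R)_w(D_R')}, a standard rational diagram $D(b_1, \dots, b_N)$ with all $b_k \neq 0$ is alternating iff consecutive entries have opposite signs. In $D' = D(a_1 - 1, -1, a_2 - 2, -1, \dots, a_{m-1} - 2, -1, a_m - 1)$ the entries alternate between the nonnegative values $a_1 - 1 \geq 1$, $a_k - 2 \geq 0$, $a_m - 1 \geq 1$ and the value $-1$, so after applying the zero-absorption convention $D(\dots, 0, \dots) = D(\dots)$ to any vanishing $a_k - 2$, the sign pattern is strictly alternating and the claim follows. (One should double-check that absorbing a zero entry $a_k - 2 = 0$ merges the two adjacent $-1$'s into a $-2$ box, which keeps the alternating pattern intact and does not disturb the crossing count, since $-1$ and $-1$ together still give two crossings of the same sign as the single $-2$ box.)

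\textbf{Main obstacle.} I expect the principal difficulty to be the crossing-sign bookkeeping rather than the topology: one must fix an orientation of the rational link, determine the sign of each box's crossings relative to that orientation (which depends on the parity of the box's position and on whether strands in that twist region are parallel or antiparallel), and confirm that the intermediate diagrams produced during the induction — which temporarily contain a $-1$ box in the \emph{middle} of a run of positive boxes, hence are not alternating — still have the crossing counts the telescoping argument needs. Handling the degenerate cases ($m = 2$; some $a_k = 2$ so that $a_k - 2 = 0$ triggers the absorption convention mid-process) cleanly within the induction is the other place where care is required.
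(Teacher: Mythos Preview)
Your overall strategy---building $D'$ from $D$ by iterated $\mathbf U$- and $\mathbf T$-moves---is the same as the paper's, but there are two genuine problems in the proposal.

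First, your application of $\mathbf T_2$ is miscomputed. By Lemma~\ref{lemma:T_transformation}, $\mathbf T_2$ applied to $D(a_1-1,-1,a_2-1,a_3,\dots)$ yields $D(a_1,\,a_2-1,\,-1,\,a_3-1,\dots)$, not $D(a_1-1,\,a_2,\,-1,\,a_3-1,\dots)$: the entry that gets incremented is the one \emph{before} the $-1$, so $a_1-1$ goes back up to $a_1$. This means a single $\mathbf U$ followed by a rightward sweep of $\mathbf T$'s does not leave the first entry at $a_1-1$; the paper instead pushes the $-1$ all the way to the right (restoring $a_1,\dots,a_{m-2}$), then applies a fresh $\mathbf U$ at the front and pushes again, one step shorter, and so on---$m-1$ applications of $\mathbf U$ in total. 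Your sequence, as written, does not reach $D'$.

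Second, and more seriously, your crossing-sign argument is not correct. The sign of the crossings in a box labeled $a_i>0$ is \emph{not} determined by the sign of $a_i$; it depends on the global orientation (whether the two strands through that twist region are parallel or antiparallel). In particular it is false that all $a_i$-boxes in $D$ contribute crossings of a single sign: already for $D(2,2)$ one orientation gives two positive and two negative crossings. Your telescoping count (drop $2(m-1)$ ``$a_i$-type'' crossings of one sign, add $m-1$ crossings of the opposite sign) would give $w(D')-w(D)=3(m-1)$, not $m-1$. The paper avoids this trap entirely: instead of computing absolute signs, it checks, by exhausting the finitely many local orientations (Figures~\ref{fig:U_transformation_signs} and~\ref{fig:T_1_transformation_signs}), that every $\mathbf U$-move preserves $p$ and decreases $n$ by exactly one, while every $\mathbf T_i$-move preserves both $p$ and $n$. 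Since exactly $m-1$ moves of type $\mathbf U$ are used, the relations $p(D')=p(D)$ and $n(D')=n(D)-(m-1)$ follow immediately, with no need to fix an orientation or track parities. That local, orientation-agnostic analysis is the missing ingredient in your proposal.
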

\begin{proof}
    $D'$ can be obtained from $D$ by applying a sequence of transformations $\mathbf{U}$ and $\mathbf{T}$, which preserve the equivalence class of the link. 
    
    $$ 
    \begin{array}{rclcl}
        D_{m} & = & D, \\
        
        D_{m-1} & = &(\mathbf{T}_{m-1} \circ \cdots \circ \mathbf{T_2} \circ \mathbf{U})(D_{m}) & = &  D (a_1, \dots, a_{m-2}, a_{m-1}-1, -1, a_{m}-1), \\
        
        D_{m-2} & = & (\mathbf{T}_{m-2} \circ \cdots \circ \mathbf{T_2} \circ \mathbf{U})(D_{m-1}) & = & D(a_1, \dots, a_{m-3}, a_{m-2} - 1, -1, a_{m-1}-2, -1, a_{m}-1), \\
        
        \hspace{-0.3cm} \vdots & & \hspace{1.4cm} \vdots & & \hspace{1.7cm} \vdots \\
         
        D_i & = & (\mathbf{T}_{i} \circ \cdots \circ \mathbf{T}_2 \circ \mathbf{U})(D_{i+1}) & = & D(a_1, \dots, a_{i-1}, a_i-1, -1, a_{i+1}-2, -1, \dots, a_{m-1}-2,-1,a_m -1), \\
        
      \hspace{-0.3cm} \vdots & & \hspace{1.4cm} \vdots & & \hspace{1.7cm} \vdots \\
        
        D_2 & = &  (\mathbf{T}_2 \circ \mathbf{U})(D_3) & = & D(a_1, a_2-1, -1, a_3 -2,-1, \dots, a_{m-1}-2, -1, a_m-1), \\
        
        D' & = & \mathbf{U}(D_2) & = & D(a_1-1, -1, a_2-2, -1, a_3-2, -1, \dots, a_{m-1}-2,-1,a_m-1).
        
    \end{array}
    $$

Next we analyze the signs of the crossings in $D$ and $D'$. Observe that in a standard rational diagram all crossings within the same box share the same sign; however, the sign of the crossings in a box labeled by $a_i$ depends on the global orientation of the diagram and therefore is not uniquely determined by the sign of $a_i$. 

A $\mathbf{U}$-transformation preserves the number of positive crossings and reduces the number of negative crossings by one, whereas a $\mathbf{T}_i$-transformation preserves the number of positive and negative crossings. This is shown in Figures~\ref{fig:U_transformation_signs} and~\ref{fig:T_1_transformation_signs}, where all possible orientations have been considered, up to reversing orientation. The blue sign towards a box indicates the sign of the crossings in that box. 

The result follows since $m-1$ transformations of type $\mathbf{U}$ are applied to obtain $D'$ from $D$.
\end{proof}

    \begin{figure}[h!]
    \centering
    \includegraphics[scale=0.25]{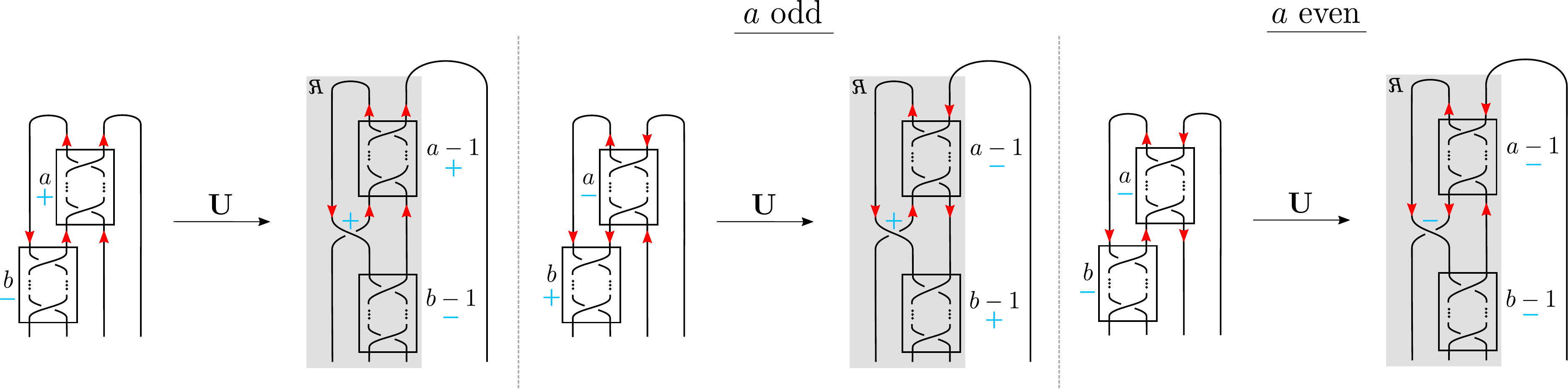}
    \caption{Transformation $\mathbf{U}$: all possible (local) orientations.}
    \label{fig:U_transformation_signs}
    \end{figure}

    \begin{figure}[h!]
    \centering
    \includegraphics[scale=0.25]{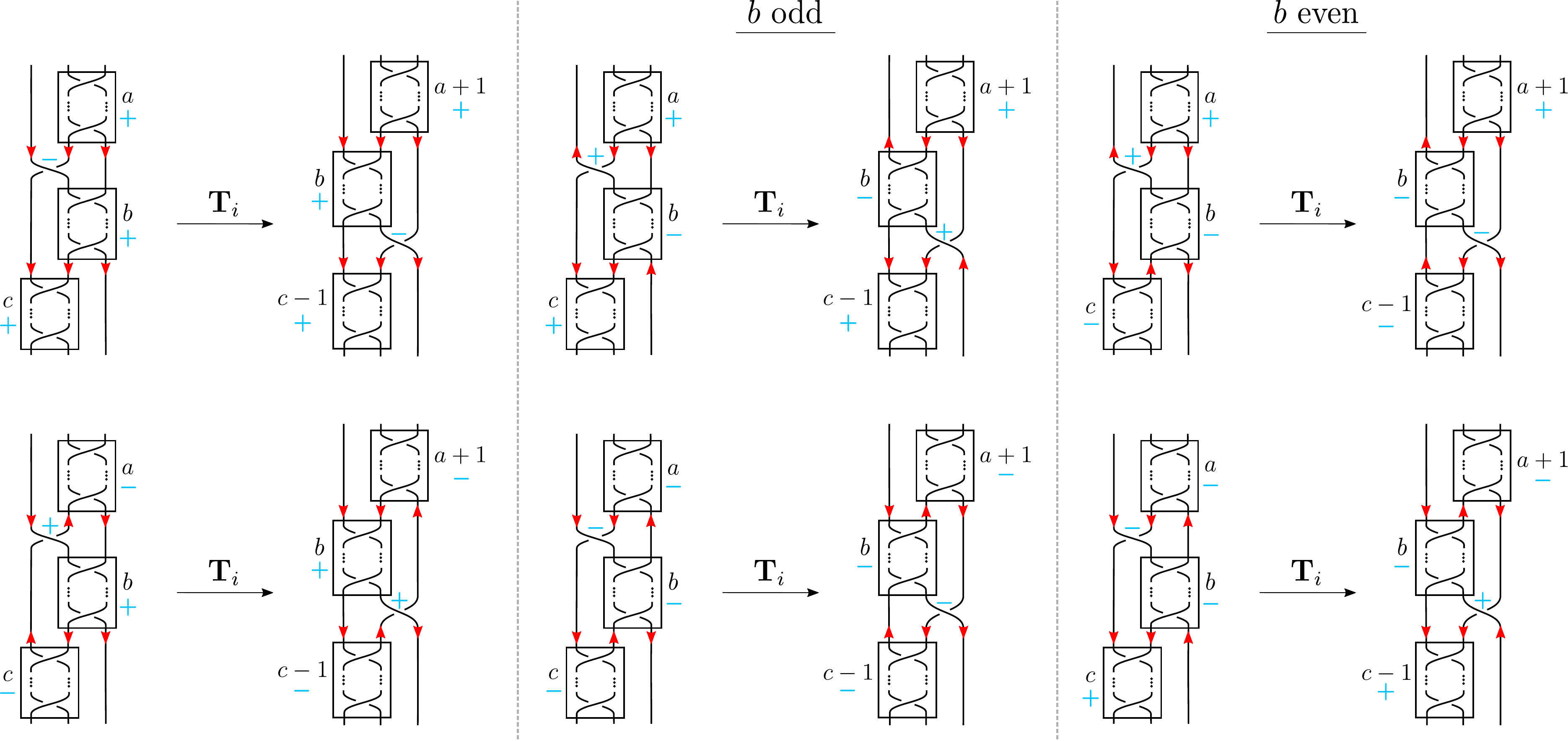}
    \caption{Transformation $\mathbf{T}_i$ ($i$ even): all possible (local) orientations.}
    \label{fig:T_1_transformation_signs}
    \end{figure}

\begin{remark}
    In the setting of Proposition \ref{prop:w(D_R)_w(D_R')}, the diagram $D'$ is as shown in Figure~\ref{fig:D_R_prime_2}, and is therefore reduced and alternating. Hence $D'$ is also $A$-adequate and $|s_AD'| = m+1$ (Figure~\ref{fig:s_AD_R_prime}).
\end{remark}

\begin{figure}[h!]
\centering
\begin{subfigure}{.2\textwidth}
  \centering
  \includegraphics[width=.4\linewidth]{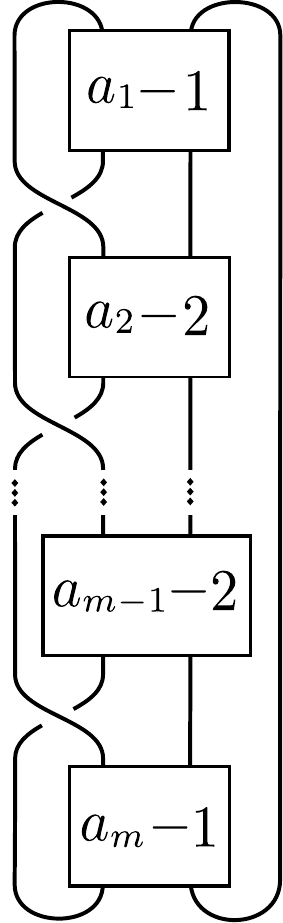}
  \caption{$D'$}\label{fig:D_R_prime_2}
\end{subfigure}%
{\hspace{0.5cm}}
\begin{subfigure}{.19\textwidth}
  \centering
   \includegraphics[width=0.4\linewidth]{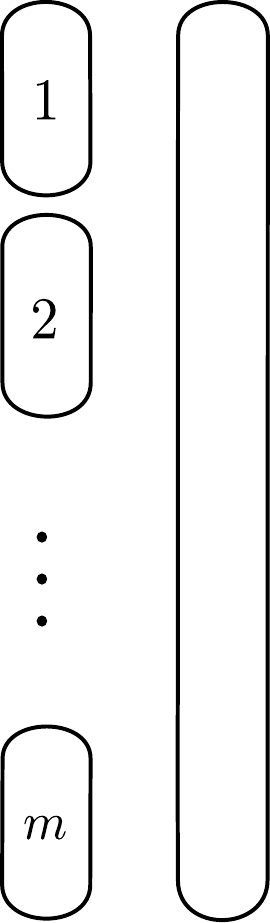}
  \caption{$s_AD'$}\label{fig:s_AD_R_prime}
\end{subfigure}
\caption{Reduced alternating diagram (a) obtained when applying Proposition \ref{prop:w(D_R)_w(D_R')} and the associated $s_A$-state (b).}\label{fig:alternating_rational_diagram}
\end{figure}


\section{The $\Lcorner_{4,3}$-shape of the Khovanov homology of closed positive 3-braids}\label{sec:L43-shape}

It is common to represent non-trivial Khovanov homology groups of a link in a table whose columns are indexed by the homological degree $i$, and its rows are indexed by the quantum degree $j$ (which \textit{jumps} by $2$). Hence, when we refer to the first column (resp. lowest row) of the Khovanov homology of a given link $L$, we mean the smallest homological degree $\underline{i}$ (resp. smallest quantum degree $\underline{j}$) where $H^{i,j}(L)$ is non-trivial. The first $a$ columns (resp. lowest $b$ rows)  thus correspond to homological degrees $i=\underline{i},\underline{i}+1,\dots, \underline{i}+a-1$ (resp. quantum degrees $j=\underline{j}, \underline{j}+2, \dots, \underline{j}+2(b-1)$); we shall refer to all of them collectively as the $\Lcorner_{a,b}$-\emph{shape} of the Khovanov homology. Note that if $D$ is a positive diagram of a link $L$, then $\underline{i}(L)=0$ \cite[Prop. 6.1]{Khovanov_2003} and, since $D$ is $A$-adequate, $\underline{j}(L) = j_{\min}(D) = c(D) - |s_AD|$.

The main purpose of this section is to prove Theorem \ref{th:main}, which establishes a criterion to determine the $\Lcorner_{4,3}$-shape of the Khovanov homology of every closed positive 3-braid. 

\begin{mythm}{1.1}
The Khovanov homology of a closed positive $3$-braid $\beta$ is as one of the Tables~\ref{tab:H(1)}--\ref{table:Kh_case_C4}. Moreover, $\beta$ is conjugate to a braid belonging to either $N=\{ 1, \sigma_1, \sigma_1^2, \sigma_1\sigma_2, \sigma_1^2 \sigma_2^2, \Delta \}$ (Tables~\ref{tab:H(1)}--\ref{tab:H(aba)}, respectively) or to some of the families $\mathbf{C1}$, $\mathbf{C2}$, $\mathbf{C3}$, $\mathbf{C4}$ (Tables~\ref{table:Kh_case_C1}--\ref{table:Kh_case_C4}, respectively), where
\[ \arraycolsep=1.4pt\def\arraystretch{1.8}
\begin{array}{rclcrcl} 
\mathbf{C1} & = & \{ \sigma_1^{k_1} \; | \; k_1 \geq 3 \}, & \text{\hspace{2cm}} & \mathbf{C4a} & = & \{ \beta \in \mathbb{B}_3 \; | \;  \inf(\beta) > 0 \} \setminus \{ \Delta \}, \\
\mathbf{C2} & = & \{ \sigma_1^{k_1}\sigma_2^2 \; | \; k_1 \geq 3 \}, & & \mathbf{C4b} & = & \{ \beta \in \mathbb{B}_3 \; | \; \inf_s(\beta) = 0 \text{ and } \operatorname{sl}(\beta) \geq 4 \}, \\
\mathbf{C3} & = & \{ \sigma_1^{k_1}\sigma_2^{k_2} \; | \; k_1,k_2 \geq 3 \}, & & \mathbf{C4} &  = & \mathbf{C4a} \cup \mathbf{C4b}.
\end{array}\]
\end{mythm}
\begin{proof}[Proof of Theorem~\ref{th:main}:]
Recall that the closures of conjugate braids are equivalent links. By Proposition~\ref{prop:representatives_conjugacy_classes_5_families}, any positive $3$-braid is conjugate to a braid $\beta$ in one of the families $\Lambda_i$ for some $i=1, \ldots, 5$. If $\inf(\beta) >0$, then either $\beta = \Delta$ or it belongs to $\mathbf{C4a}$. Otherwise, Remark \ref{remark:summit_infimum_Lambda_i}
implies that $\inf_s(\beta)=0$ and it follows from the description of families $\Lambda_i$ that its syllable length is either $0$, $1$ or $2t$, with $t>0$. We analyze these cases: 
\begin{itemize}[topsep=0pt, itemsep=-2pt]
\item If $\operatorname{sl}(\beta)=0$ then $\beta$ is trivial. \newline
\item If $\operatorname{sl}(\beta) = 1$ then $\beta = \sigma_1^k$, with $k>0$, and therefore either $\beta$ is $\sigma_1$ or $\sigma_1^2$ or it belongs to $\mathbf{C1}$. \newline
\item If $\operatorname{sl}(\beta)=2$ then $\beta = \sigma_1^{k_1} \sigma_1^{k_2}$, and therefore either $\beta = \sigma_1\sigma_2$ or $\beta = \sigma_1^2\sigma_2^2$ or $\beta$ belongs to either $\mathbf{C2}$ or $\mathbf{C3}$.  \newline
\item If $\operatorname{sl}(\beta)=2t$ with $t>1$, then $\beta \in \Lambda_4$ and it belongs to $\mathbf{C4b}$.
\end{itemize}

Khovanov homology of braids in $N$ is shown in Tables~\ref{tab:H(1)}--\ref{tab:H(aba)}. Then, to complete the proof it suffices to show that, for each $r=\mathbf{1}, \mathbf{2}, \mathbf{3}, \mathbf{4}$, the closure of every braid in $\mathbf{C}r$ has the expected $\Lcorner_{4,3}$-shape in its Khovanov homology. The rest of this section is devoted to prove this fact.
\end{proof}

\subsection{Strategy for the proof of Theorem \ref{th:main}}\label{subsec:strategy_proof_theorem_5.2}
We will address the proof of Theorem \ref{th:main} in two parts. First, in Section \ref{subsection:inf_s>0} we deal with the case $\mathbf{C4a}$: the braids with strictly positive infimum. In Section \ref{subsection:inf_s=0} we deal with the case of $3$-braids with infimum equal to $0$, and it comprises the cases $\mathbf{C1}$,  $\mathbf{C2}$, $\mathbf{C3}$, and $\mathbf{C4b}$.

In all cases we mostly adhere to a common strategy, that we outline here\footnote{Possible exceptions will be specifically handled when needed.}. The proof is by induction on the length of $\beta$. After checking the base cases, we assume that the closures of all braids of length $l' < l(\beta)$ in the same family $\mathbf{C}r$ have the same $\Lcorner_{4,3}$-shape in their Khovanov homology.

\begin{remark}\label{rem:Delta_gamma}
Observe that by Proposition~\ref{prop:representatives_conjugacy_classes_5_families} the braid $\beta$ is conjugate to a braid in one of the families $\Lambda_k$, for $1 \leq k \leq 5$; it is important to know that both braids belong to the same family $\mathbf{C}r$. Moreover, they have the same length, since both braids are positive and conjugate.
\end{remark}

By the above remark, we assume that $\beta$ is in one of the families $\Lambda_k$, for $1\leq k \leq 5$. Let $w$ be the word representing the normal form of $\beta$, with the convention that $\Delta = \sigma_1\sigma_2\sigma_1$ (i.e., $w$ equals the expression in the corresponding family $\Lambda_k$, with the aforementioned convention). We decompose $w$ as $w = (\sigma_1\sigma_2\sigma_1)^p \sigma_1 x$, with $p\geq 0$ as big as possible (if $\beta = \Delta^{q}$, we write $w=(\sigma_1\sigma_2\sigma_1)^{q-1} \sigma_1 \sigma_2 \sigma_1$).

We denote by $D$ the diagram associated to the word $w$ representing the link $\widehat{\beta}$. By performing an $A$ or $B$ smoothing at the first crossing $\sigma_1$ in the non-$\Delta$ part in $w$, we obtain two diagrams, $D_A$ and $D_B$, respectively (see Figure~\ref{fig:smoothings_standard_braid_diagram}).

 \begin{figure}[ht!]
    \centering
    \includegraphics[width=0.7\linewidth]{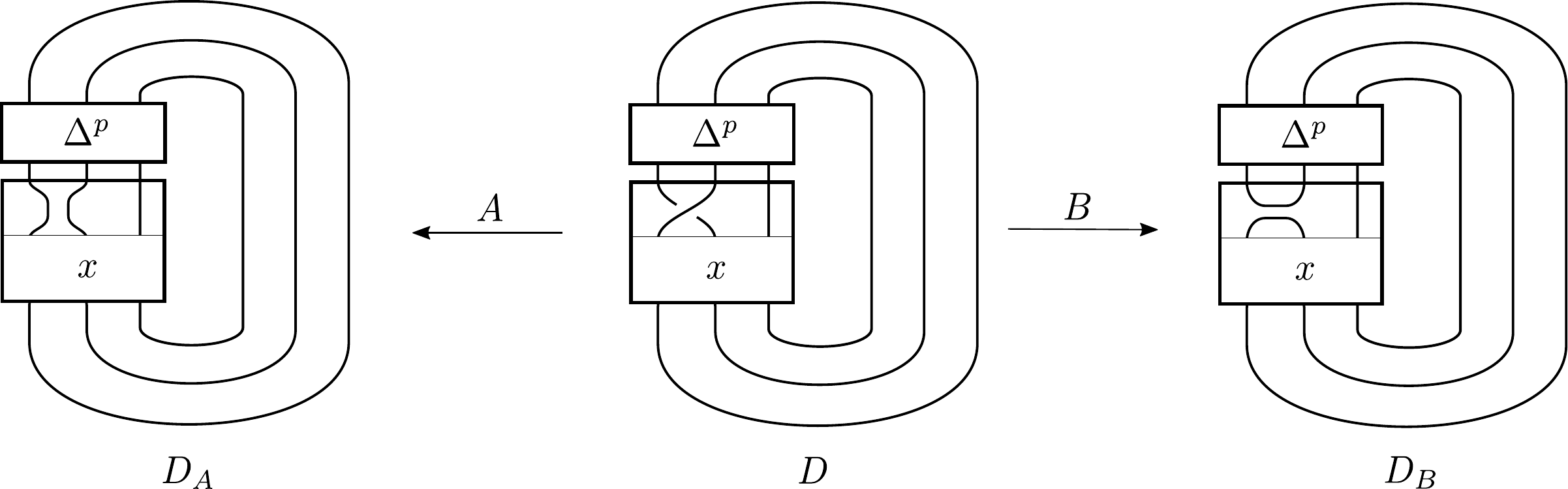}
    \caption{$A$ or $B$ smoothing at the first crossing $\sigma_1$ in the non-$\Delta$ part.}
    \label{fig:smoothings_standard_braid_diagram}
\end{figure}

Diagram $D_A$, whose orientation is inherited from that in $D$, is given as the closure of the word $w_A=\Delta^px$ of length $l(\beta)-1$ and we will prove that the associated braid $\beta_A$ belongs to the same family $\mathbf{C}r$ as $\beta$ for some $r\in \{\mathbf{1,2,3,4}\}$. 

Recall the long exact sequence in Khovanov homology relating the homology groups of $D$, $D_A$ and $D_B$: \begin{equation}\label{eq:shortexactsequencestrategy}    \begin{array}{rcrcl} \cdots & \longrightarrow & H^{\frac{w(D_B)-w(D)-1}{2}+i,\frac{3(w(D_B)-w(D))-1}{2}+j}(D_B) & 
         \longrightarrow & H^{i,j}(D) \longrightarrow H^{i,j-1}(D_A) \\
        & \longrightarrow & H^{\frac{w(D_B)-w(D)+1}{2}+i,\frac{3(w(D_B)-w(D))-1}{2}+j}(D_B) & \longrightarrow & \cdots
    \end{array}  
    \end{equation}

Diagrams $D$ and $D_A$ are positive (hence $A$-adequate) and therefore $\underline{i}(D)=\underline{i}(D_A)=0$ and $\underline{j}(D)=\underline{j}(D_A)+1$ by Proposition \ref{prop:j_min_j_max}. The key point is to prove that $H^{i,j}(D) \cong H^{i,j-1}(D_A)$ for $i\in I = \{0,1,2,3\}$ and $j\in J=\{\underline{j}(D), \, \underline{j}(D)+2, \, \underline{j}(D)+4\}$ (i.e., they have the same $\Lcorner_{4,3}$-shape). Since $D$ and $D_A$ represent the links $\widehat{\beta}$ and $\widehat{\beta_A}$, the result follows by induction hypotesis, since $l(\beta_A) = l(\beta) -1$. 

In order to prove that $H^{i,j}(D)$ and $H^{i,j}(D_A)$ have the same $\Lcorner_{4,3}$-shape it is enough to show that for $i\in I$ and $j\in J$ we have
\begin{equation}\label{inequalities_proof_Th_5.2}
\frac{w(D_B)-w(D) + 1}{2}+i < \underline{i}(D_B) \quad \quad \text{and} \quad \quad \frac{3(w(D_B)-w(D))-1}{2}+j < \underline{j}(D_B),
\end{equation}
since this implies that the corresponding homology groups of $D_B$ in the long exact sequence \eqref{eq:shortexactsequencestrategy} are trivial. Notice that in principle one should consider one additional inequality, similar to the first one in \eqref{inequalities_proof_Th_5.2} but with numerator equals to $w(D_B)-w(D)-1$; since this inequality is weaker than the displayed one, we can omit it. 

The tricky part is to analyze the values of those parameters involving $D_B$ in \eqref{inequalities_proof_Th_5.2}. To do so we transform $D_B$ into an equivalent alternating standard rational diagram $D_R'$, allowing us to rewrite the former inequalities in terms of parameters depending just on $D_R'$ (and not on $D_B$). We explain now how to obtain $D_R'$ from $D_B$ and how their writhes are related. 

We can perform an isotopy to $D_B$ which, roughly speaking, \emph{removes} the crossings corresponding to the $p$ initial factors $\Delta=\sigma_1\sigma_2\sigma_1$ in $w$, once at a time; write $D_B'$ for the obtained diagram (see Figure~\ref{fig:isotopies_Delta_DB_DB_prime}). We make two observations: \\
O1) The arrangement of the strands depends on the parity of $p$, as shown in the rightmost part of Figure~\ref{fig:isotopies_Delta_DB_DB_prime}. \newline
O2) Regardless of the orientation of $D_B$, each factor $\sigma_1\sigma_2\sigma_1$ in $w$ contributes with two negative and one positive crossings to $D_B$, as shown in the leftmost part of Figure \ref{fig:isotopies_Delta_DB_DB_prime}.

Next, if the first syllable of $x$ is $\sigma_1^k$, we perform $k$ Reidemeister I moves to the associated $k$ removable negative twists in $D_B'$. We write $D_R$ for the resulting diagram, which is of the form shown in Figures \ref{fig:racional_impar} and \ref{fig:racional_par}, and therefore corresponds to a standard rational diagram. This fact together with O2 leads to the following relation:  
\begin{equation}\label{eq:w(D_B)_w(D_R)} w(D_B) = w(D_R) - p - k.  \end{equation}

\begin{figure}[ht!]
    \centering
    \includegraphics[width=0.99\linewidth]{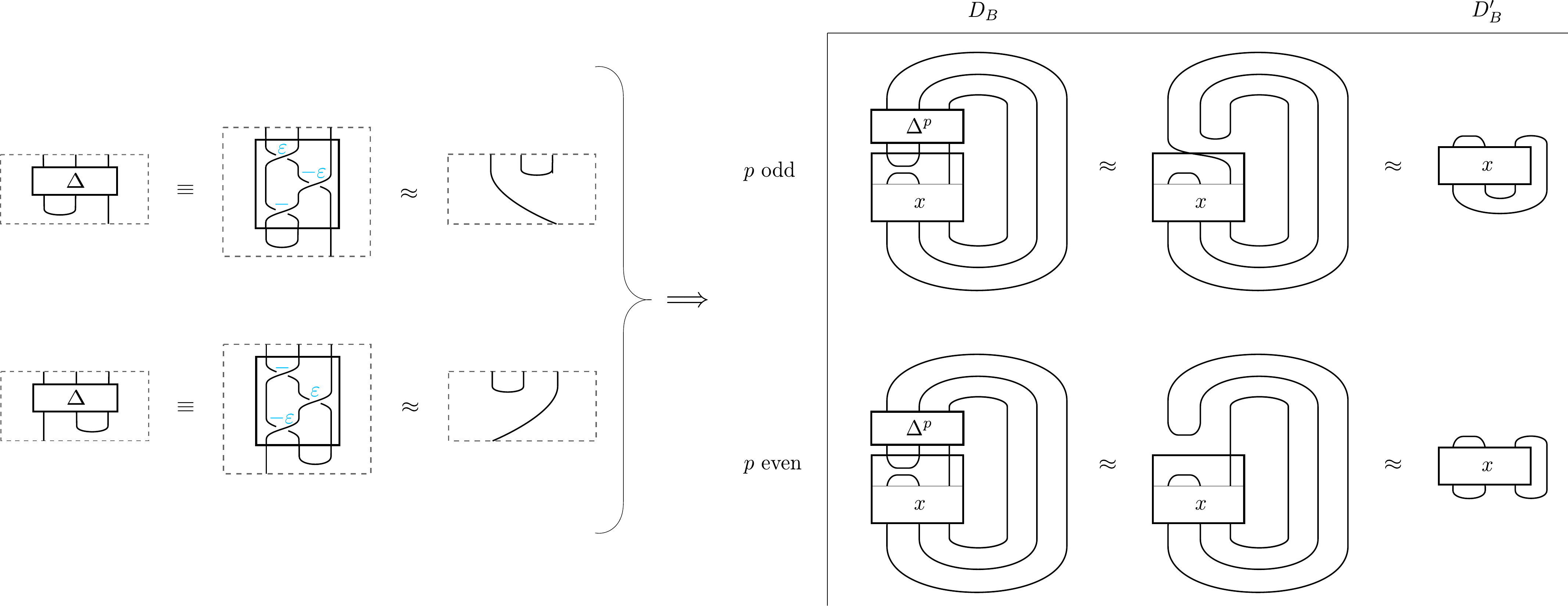}
    \caption{Isotopy from $D_B$ to $D_B'$.}
    \label{fig:isotopies_Delta_DB_DB_prime}
\end{figure}

Now, by Proposition~\ref{prop:w(D_R)_w(D_R')} we can transform $D_R$ into a reduced alternating diagram $D_R'$ as the one shown in Figure~\ref{fig:D_R_prime_2}. For our purposes it is crucial not just that $D_R'$ is reduced and alternating (and therefore adequate and $H$-thin \cite[Th. 3.12]{Lee_2005}) but also the fact that by combining equation \eqref{eq:w(D_B)_w(D_R)} and Proposition~\ref{prop:w(D_R)_w(D_R')} we know the precise relation between $w(D_B)$ and $w(D_R')$.

Since $D_B$ and $D_R'$ are equivalent diagrams, it is straightforward that $\underline{j}(D_R')= \underline{j}(D_B)$ and $\underline{i}(D_R')= \underline{i}(D_B)$. The advantage of using diagram $D_R'$ to analyze these parameters relies on the fact that $D_R'$ represents an $H$-thin link (and therefore its Khovanov homology is supported on two adjacent diagonals), and since it is adequate, $\underline{j}(D_R') = j_{\min}(D_R')$ and $H^{*, \underline{j}}(D_R') = H^{-n(D_R'), \underline{j}}(D_R') = \mathbb{Z}$, and therefore \begin{equation}\label{eq:real_i_min}\underline{i}(D_R') = -n(D_R').\end{equation}

Hence, the proof of Theorem \ref{th:main} boils down to proving \eqref{inequalities_proof_Th_5.2}, the base case of the induction for each family $\mathbf{C}r$, and the fact that 
the braids $\beta$ and $\beta_A$ belongs to the same family $\mathbf{C}r$, for $r=\mathbf{1,2,3,4}$; we will see that under certain circumstances, it might happen that $\beta \in \mathbf{C4b}$ while $\beta_A\in \mathbf{C4a}$, but since the $\Lcorner_{4,3}$-shape of the homology of these families agree, the inductive argument will work. The rest of this section is devoted to the proof of these three assertions for each of the families $\mathbf{C}r$.

\subsection{3-braids with infimum greater than 0}\label{subsection:inf_s>0} In this section we focus on braids with infimum greater or equal to one. Recall that in  Proposition~\ref{prop:representatives_conjugacy_classes_5_families} the conjugations transforming $\beta$ into a braid in some family $\Lambda_k$ do not decrease the infimum. Therefore, in this case Proposition~\ref{prop:representatives_conjugacy_classes_5_families}  and Remark~\ref{rem:Delta_gamma} can be rewritten by replacing each $\Lambda_k$ occurrence by the corresponding family $\Lambda_k^+$, where: 
$$
\begin{array}{l}
\Lambda_1^{+}= \left\{\Delta^p \ | \ p \geq 1 \right\} \subset \Lambda_1 , \\
\Lambda_2^{+}= \left\{\Delta^p\sigma_1^{k_1} \ | \  p \geq 1,\ k_1>0\right\} \subset \Lambda_2, \\
\Lambda_3^{+}= \left\{\Delta^{2u}\sigma_1\sigma_2 \ | \ u \geq 1 \right\} \subset \Lambda_3, \\
\Lambda_4^{+}= \left\{\Delta^{2u}\sigma_1^{k_1}\sigma_2^{k_2}\cdots \sigma_2^{k_{2t}} \ | \ u \geq 1,\ t>0,\ k_1,\ldots,k_{2t}\geq 2\right\} \subset \Lambda_4, \\
\Lambda_5^{+}= \left\{\Delta^{2u+1}\sigma_1^{k_1}\sigma_2^{k_2}\cdots \sigma_1^{k_{2t+1}} \ | \ u \geq 0,\ t>0,\ k_1,\ldots,k_{2t+1}\geq 2\right\} \subset \Lambda_5. 
\end{array}
$$

\begin{proof}[Proof of the case $\mathbf{C4a}$ of Theorem \ref{th:main}]
     Let $\beta$ denote a 3-braid with positive infimum, $\beta \neq \Delta$. Following the general strategy depicted in Section \ref{subsec:strategy_proof_theorem_5.2}, we will use induction on $l=l(\beta)$; note that $l \geq 3$. As $\Delta$ is excluded from case $\mathbf{C4a}$, the base case would be $l=4$, which corresponds to the braid $\beta = \Delta \sigma_1$ (up to conjugation). The link $\widehat{\Delta \sigma_1}$ satisfies the statement, as shown in Table \ref{tab:abaa}. 
     
     Let $l \geq 5$ and suppose that for every $\beta' \in \mathbb{B}_3^+$ with positive infimum, $\beta' \neq \Delta$ and $l(\beta') \leq l-1$ the Khovanov homology of $\widehat{\beta'}$ has the expected $\Lcorner_{4,3}$-shape. Let $\beta \in \mathbf{C4a}$ with $l(\beta) = l$. According to Remark~\ref{rem:Delta_gamma} we can assume without loss of generality that $\beta \in \Lambda_k^{+}$ for some $k$. We will address five subcases, one for each of the families $\Lambda_k^{+}$ above. Observe that for each subcase the corresponding braid $\beta_A$ leading to $D_A$ belongs to the family $\mathbf{C4a}$, and therefore the induction hypothesis applies. We start analyzing the case $\Lambda_5^{+}$. 

\begin{table}[!h]
\centering
\begin{tiny}
\begin{tblr}{|c||c|c|c|c|}
\hline
\backslashbox{\!$j$\!}{\!$i$\!} & $0$ & $1$ & $2$ & $3$ \\
\hline
\hline
$9$  &   &   &   & $ \Rone $ \\
\hline
$7$  &   &   &   & $ \Tone{2} $ \\
\hline
$5$  &   &   & $ \Rone $ &   \\
\hline
$3$  & $ \Rone $ &   &   &   \\
\hline
$1$  & $ \Rone $ &   &   &   \\
\hline
\end{tblr} \end{tiny}
\caption{$H(\widehat{\Delta \sigma_1})$.}\label{tab:abaa}
\end{table}

    \textbf{Subcase} $\mathbf{C4a.5}$ ($w \in \Lambda_5^{+}$). 
    Let $D$ be the diagram associated to a word $w \in \Lambda_5^+$, and consider diagrams $D_A$ and $D_B$ obtained as explained in Section~\ref{subsec:strategy_proof_theorem_5.2} (see Figure~\ref{fig:C4a.5}). Diagram $D_B$ can be isotopied into an equivalent standard rational diagram $D_R$, and by applying Proposition \ref{prop:w(D_R)_w(D_R')} we obtain the reduced alternating diagram $D_R'$ shown in the rightmost part of Figure~\ref{fig:C4a.5}.

    \begin{figure}[h!]
    \centering
    \includegraphics[scale=0.24]{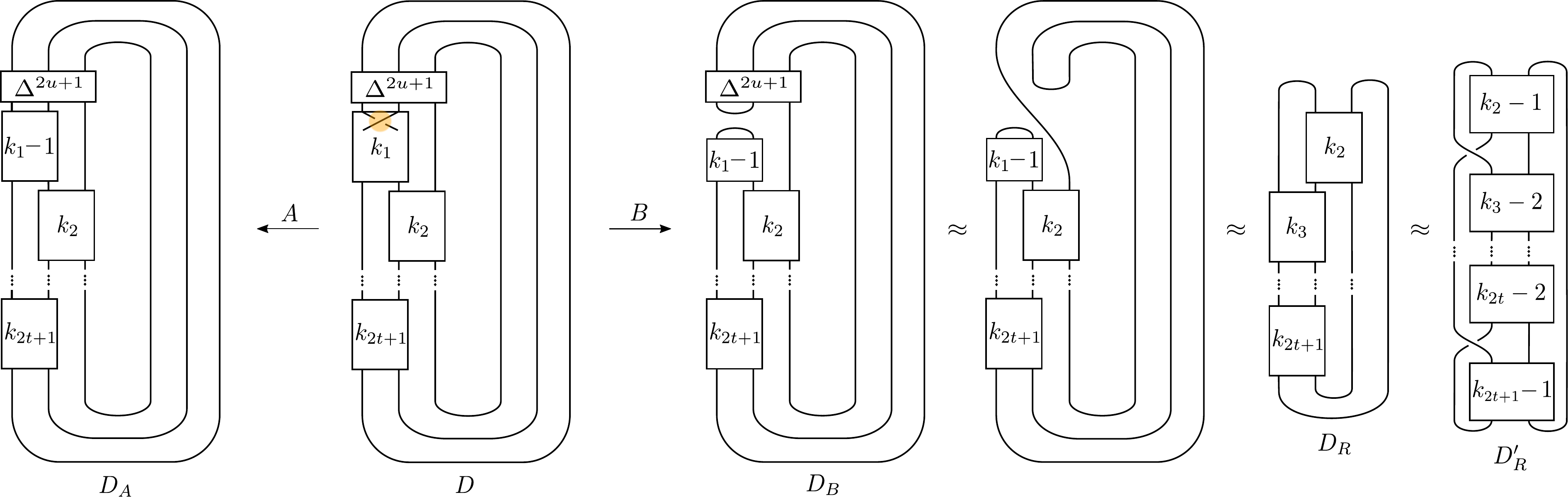}
    \caption{Diagrams illustrating the proof of subcase $\mathbf{C4a.5}$. The smoothed crossing is highlighted in yellow.}
    \label{fig:C4a.5}
    \end{figure}

The orientation of $D_B$ is not fixed, but by (\ref{eq:w(D_B)_w(D_R)}) we get that $$w(D_B)=w(D_R)-(2u+1)-(k_1-1) = w(D_R)-2u-k_1,$$ regardless of its orientation. As there are $2t$ boxes in $D_R$, from Proposition~\ref{prop:w(D_R)_w(D_R')} we have that $w(D_R)=w(D_R')-(2t-1)$, so $$w(D_B)=w(D_R')-2u-2t-k_1+1.$$ 

We analyze now extreme values for homological and quantum degrees of the Khovanov homology of $D_R'$ (which coincide with those of $D_B$, since both diagrams are equivalent). Recall that, since $D_R'$ is reduced and alternating (hence adequate) it holds that $\underline{i}(D_R')=-n(D_R')$. Moreover, since $|s_AD_R'|=2t+1$ (see Figure~\ref{fig:s_AD_R_prime}) applying Proposition~\ref{prop:j_min_j_max} leads to 
$$\underline{j}(D_R') = c(D_R')-3n(D_R')- (2t+1),$$
where $c(D_R')=\kappa - k_1 - (2t-1)$ and $\kappa =  k_1 + \cdots + k_{2t+1}$.

With regard to the diagram $D$, note that $$c(D)=6u+3+\kappa = w(D), \quad n(D)=0 \quad \mbox{ and } \quad |s_AD|=3,$$ and since it is positive (and therefore $A$-adequate) we get $$\underline{j}(D)= 6u+3+\kappa-3=6u+\kappa.$$ 

Recall that our goal is to prove inequalities \eqref{inequalities_proof_Th_5.2}, which we rewrite using the data computed above: 
        \begin{equation}\label{inequality_i_C4a.5}
   \begin{array}{crcl}
    &  \frac{1}{2}( w(D_B)-w(D)+1) + i & < & \underline{i}(D_B)=\underline{i}(D_R') \\
    \Leftrightarrow &   w(D_R') -8u -2t - k_1 - \kappa -1 +2i & < & -2n(D_R') \\
   \Leftrightarrow &  c(D_R') -8u -2t - k_1 - \kappa -1 +2i & < & 0 \\
   \Leftrightarrow &  2i & < & 8u + 4t + 2k_1 \\
    \end{array} 
    \end{equation}
    and
\begin{equation}\label{inequality_j_C4a.5}
    \begin{array}{crcl}
     & \frac{1}{2}\left[ 3 (w(D_B)-w(D))-1 \right] + j  & < & \underline{j}(D_B)= \underline{j}(D_R') \\
   \Leftrightarrow  &   3\left[(w(D_R')-2u-2t-k_1+1)-(6u+3+\kappa)\right]-1 + 2j & < & 2c(D_R') - 6n(D_R') - 2(2t+1) \\
    \Leftrightarrow &  3(p(D_R')-n(D_R'))  - 24u - 6t -3k_1 - 3\kappa -7 +2j  & < & 2p(D_R') - 4n(D_R') -4t -2   \\
    \Leftrightarrow & c(D_R') - 24u - 6t  -3k_1 - 3\kappa -7 +2j& < & -4t-2 \\
    \Leftrightarrow & 2j-4 & < & 24u +4t  + 4k_1 + 2\kappa. 
    \end{array} 
    \end{equation}

As we discussed while outlining the general strategy, we need to prove the above inequalities for $i=0,1,2,3$ and $j = 6u+\kappa, 6u+\kappa+2, 6u+\kappa+4$, and therefore it is enough to prove them for the maximum values. For $i=3$, the last inequality in (\ref{inequality_i_C4a.5}) holds, since $ 8u + 4t + 2k_1 \geq 8 \cdot 0 + 4 \cdot 1+ 2 \cdot 2 = 8$. For $j = 6u + \kappa + 4$, the last inequality in (\ref{inequality_j_C4a.5}) is equivalent to $4 <  12u + 4t +4k_1$, which is true since $12u + 4t +4k_1 \geq  12\cdot 0 + 4\cdot 1  + 4 \cdot 2 = 12 $. 
   
\vspace{0.17cm}

\textbf{Subcase} $\mathbf{C4a.4}$  ($w \in \Lambda_4^{+}$).  
The corresponding diagrams $D$, $D_A$ and $D_B$ are depicted in Figure~\ref{fig:C4a.4}. As in the previous case, the goal is to prove inequalities \eqref{inequalities_proof_Th_5.2} for $i\in \{0,1,2,3\}$ and $j\in \{\underline{j}(D), \, \underline{j}(D)+2, \, \underline{j}(D)+4\}$.

\begin{figure}[h!]
\centering
\includegraphics[scale=0.24]{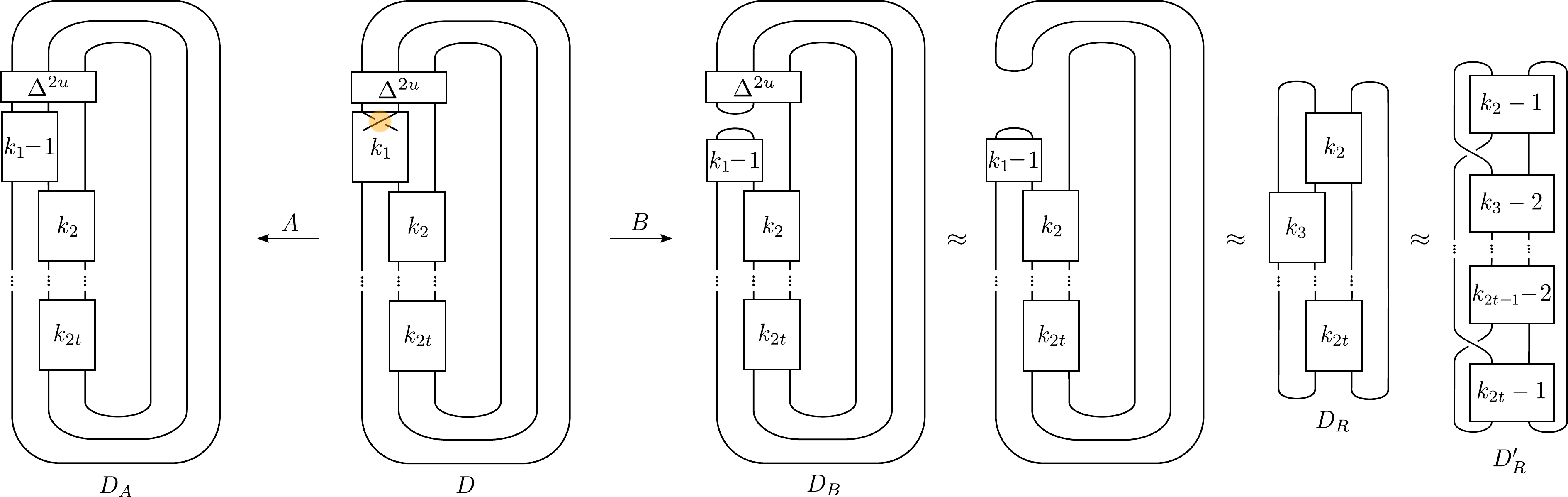}
\caption{Diagrams illustrating the proof of subcase $\mathbf{C4a.4}$.}
\label{fig:C4a.4}
\end{figure}

We list all the necessary data (which can be computed in the same way as we did in subcase $\mathbf{C4a.5}$): 
    \[ \begin{array}{rcl}
        w(D_B)  & = & w(D_R') - 2u - 2t - k_1 + 3, \\
        \underline{j}(D_R') & = & c(D_R') - 3n(D_R') - 2t, \\
        c(D_R')  & = & \kappa - 2t - k_1 + 2, \text{ with } \kappa = k_1 + \cdots + k_{2t}, \\
        \underline{i}(D_R') & = & -n(D_R'), \\
         w(D) & = & c(D) \; \; = \; \; 6u + \kappa, \\
        \underline{j}(D) & = & 6u + \kappa -3.
    \end{array} \]

As in the analysis of the previous case, we transform \eqref{inequalities_proof_Th_5.2} into equivalent inequalities using the above data: 
 \begin{equation}\label{inequality_i_C4a.4}
        \begin{array}{crcl}
         & \frac{1}{2}\left( w(D_B)-w(D)+1 \right) + i  & < & \underline{i}(D_R') \\
       \Leftrightarrow  &   w(D_R') - 8u - 2t - k_1 - \kappa + 4 + 2i & < & -2n(D_R') \\
        \Leftrightarrow &  c(D_R') - 8u - 2t - k_1 - \kappa + 4 + 2i & < & 0  \\
        \Leftrightarrow & 6 + 2i & < & 8u + 4t + 2k_1
        \end{array} 
    \end{equation} 
    and 
    \begin{equation}\label{inequality_j_C4a.4}
        \begin{array}{crcl}
         & \frac{1}{2}\left[ 3 (w(D_B)-w(D))-1 \right] + j  & < & \underline{j}(D_R') \\
       \Leftrightarrow  &   3(w(D_R')-2u-2t-k_1+3-6u- \kappa)-1+ 2j & < & 2c(D_R') - 6n(D_R') - 4t \\
        \Leftrightarrow &  3(p(D_R')-n(D_R'))  - 24u - 6t -3k_1 - 3\kappa +8 +2j  & < & 2p(D_R') - 4n(D_R') -4t    \\
        \Leftrightarrow & c(D_R') - 24u - 6t  -3k_1 - 3\kappa + 8 + 2j& < & -4t \\
        \Leftrightarrow & 10 + 2j & < & 24u +4t  + 4k_1 + 2\kappa. 
        \end{array} 
    \end{equation}
    
Again, it is enough to prove inequalities above for the maximum values $i=3$ and $j=6u+\kappa+1$. For $i=3$, the last inequality in (\ref{inequality_i_C4a.4}) holds, since $8u + 4t + 2k_1 \geq  8 \cdot 1 + 4 \cdot 1 + 2 \cdot 2 = 16$. For $j = 6u + \kappa + 1$, the last inequality in (\ref{inequality_j_C4a.4}) is equivalent to $12 < 12u + 4t +4k_1$, which is true since $12u + 4t +4k_1 \geq 12\cdot 1 + 4\cdot 1 + 4 \cdot 2 = 24 $. 

\vspace{0.17cm}
   
 \textbf{Subcase} $\mathbf{C4a.3}$ ($w \in \Lambda_3^{+}$). The corresponding diagrams $D$, $D_A$ and $D_B$ obtained as explained in Section \ref{subsec:strategy_proof_theorem_5.2} are shown in Figure~\ref{fig:C4a.3}. In this case $D_B$ is a (non-trivial) diagram of the trivial knot. In fact, the associated diagram $D_R$ contains a single (positive) crossing. Therefore, the only non-trivial groups in the homology of $D_B$ are $H^{0,1}(D_B)=H^{0,-1}(D_B) = \mathbb{Z}$. The strategy outlined in the previous cases can be simplified, and using the fact that $w(D_B)=1-2u$ when analyzing \eqref{eq:shortexactsequencestrategy} the statement holds. 

\begin{figure}[h!]
\centering
\includegraphics[scale=0.24]{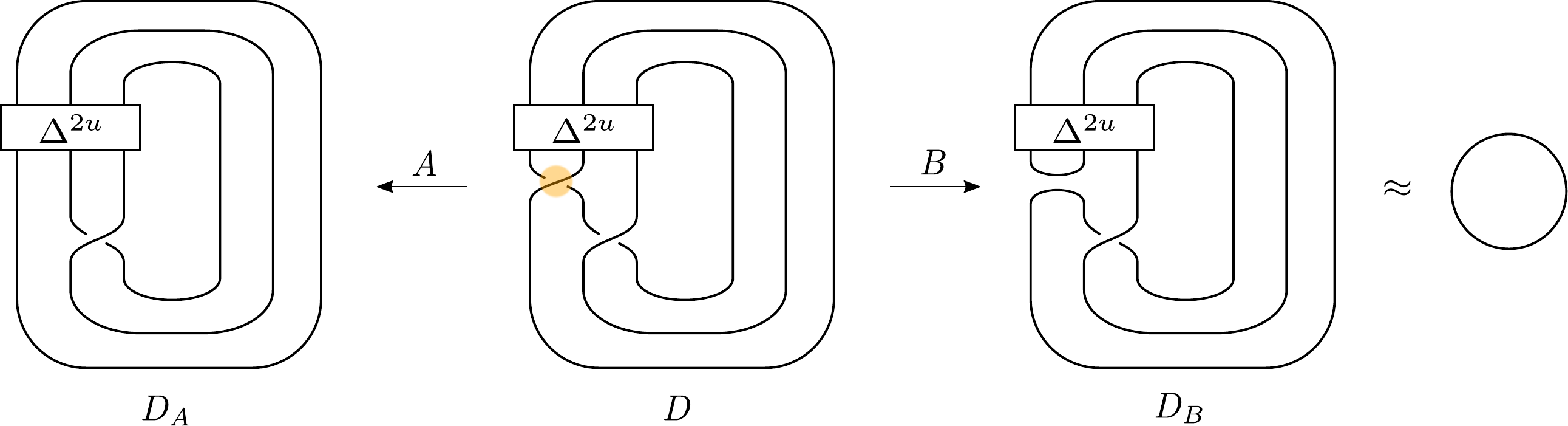}
\caption{Diagrams illustrating the proof of subcase $\mathbf{C4a.3}$.}
\label{fig:C4a.3}
\end{figure}

Notice that our results agree with \cite[Cor. 5.7]{Chandler_Lowrance_Sazdanovic_Summers_2022} when restricting to $u>0$. 

 \vspace{0.17cm}
 
\textbf{Subcase} $\mathbf{C4a.2}$  ($w \in \Lambda_2^{+}$).
Diagrams $D$, $D_A$ and $D_B$ obtained when following the strategy in Section~\ref{subsec:strategy_proof_theorem_5.2} are shown in Figure~\ref{fig:C4a.2}. 

\begin{figure}[h!]
\centering
\includegraphics[scale=0.24]{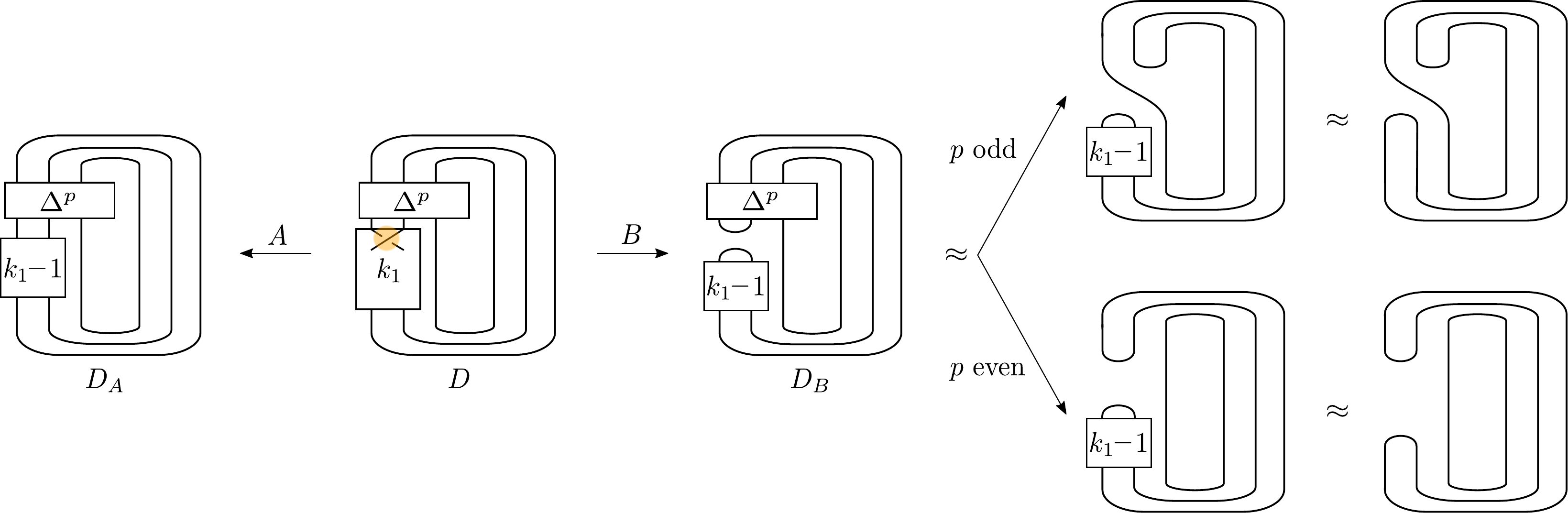}
\caption{Diagrams illustrating the proof of subcase $\mathbf{C4a.2}$.}
\label{fig:C4a.2}
\end{figure}

Observe that $D_B$ is a (non-trivial) diagram of the unknot if $p$ is odd, and a (non-standard) diagram of the $2$-component trivial link if $p$ is even, and therefore we get:  if $p$ is odd, the non-trivial homology groups are $H^{0,-1}(D_B)=H^{0,1}(D_B)=\mathbb{Z}$; if $p$ is even, the non-trivial homology groups are $H^{0,-2}(D_B)=H^{0,2}(D_B)=\mathbb{Z}$ and $H^{0,0}(D_B)=\mathbb{Z}^2$.

 We list the needed data (which can be computed directly from the diagrams in Figure~\ref{fig:C4a.2}):
 $$w(D_B)=1-p-k_1, \; \quad \underline{i}(D_B) = 0, \; \quad  \underline{j}(D_B)=-\frac{1}{2}(3+(-1)^p),   \;  \quad  w(D)=3p+k_1,  \;  \quad \underline{j}(D) = 3p+k_1-3.$$

After substituting the above parameters in \eqref{inequalities_proof_Th_5.2} the inequalities are rewritten as 
\begin{equation}\label{inequality_i_j_C4a.2}
     2 + 2i < 4p + 2k_1 \quad \text{ and } \quad 5 + (-1)^p + 2j < 12p + 6k_1.
 \end{equation}

We prove them for the maximal values $j=3p+k_1+1$ and $i=3$. For $i=3$, the leftmost inequality in (\ref{inequality_i_j_C4a.2}) holds if $p \geq 2$ or $k_1 \geq 3$. For $j = 3p+k_1+1$, the rightmost inequality in (\ref{inequality_i_j_C4a.2}) holds, since $p \geq 1$ and $k_1 \geq 1$. The pathological situations correspond to the base case $(p,k_1)=(1,1)$ and the case $(p,k_1)=(1,2)$. In the latter, $w = \Delta \sigma_1^2$ and $w_A = \Delta \sigma_1$, and both words give rise to two braids whose closure have the same $\Lcorner_{4,3}$-shape in their Khovanov homology, as shown in Tables \ref{tab:abaa} and \ref{tab:abaaa}, which agrees with that of Table~\ref{table:Kh_case_C4}.

 \begin{table}[!h]
\centering
\begin{tiny}
    \begin{tblr}{|c||c|c|c|c|c|}
\hline
\backslashbox{\!$j$\!}{\!$i$\!} & $0$ & $1$ & $2$ & $3$ & $4$ \\
\hline
\hline
$12$  &   &   &   &   & $ \Rone $ \\
\hline
$10$  &   &   &   & $ \Rone $ & $ \Rone $ \\
\hline
$8$  &   &   &   & $ \Tone{2} $ &   \\
\hline
$6$  &   &   & $ \Rone $ &   &   \\
\hline
$4$  & $ \Rone $ &   &   &   &   \\
\hline
$2$  & $ \Rone $ &   &   &   &   \\
\hline
\end{tblr} \end{tiny}
\caption{$H(\widehat{\Delta \sigma_1^2})$.}\label{tab:abaaa}
\end{table}
  
   \vspace{0.17cm}
   
 \textbf{Subcase} $\mathbf{C4a.1}$  ($w \in \Lambda_1^{+}$). In {\cite[Cor. 5.7]{Chandler_Lowrance_Sazdanovic_Summers_2022}}, the Khovanov homology of 
links $\widehat{\Delta^p}$ is computed, and when $p>1$ one gets the expected $\Lcorner_{4,3}$-shape in the homology.
\end{proof}

In this section we have analyzed the Khovanov homology of the links associated to all $3$-braids with infimum greater than zero but one case, that of the braid $\beta = \Delta$, which is included in the set $N$ in the statement of Theorem \ref{th:main}.

\subsection{3-braids with infimum equal to 0}\label{subsection:inf_s=0} Now discuss positive 3-braids which do not belong to the conjugacy class of any braid of the form $\Delta \beta$ with $\beta \in \mathbb{B}_3^+$. This corresponds to braids whose conjugacy class contains a braid in the family $\{1,\sigma_1, \sigma_1^2, \sigma_1\sigma_2, \sigma_1^2\sigma_2^2\}$ (whose Khovanov homology can be easily computed) or in families $\mathbf{C1, C2, C3}$ and $\mathbf{C4b}$. In this section we prove Theorem~\ref{th:main} for these families.

\begin{proof}[Proof of Case  $\mathbf{C1}$ of Theorem \ref{th:main}] The closure of $\beta = {\sigma_1^{k_1}} \in \mathbb{B}_3$ consists of a disjoint union of the torus link $T(2,k_1)$ and an unknotted component, for each $k_1 \geq 3$. We could prove the result by induction on the length of $\beta$ by applying the strategy outlined in Section \ref{subsec:strategy_proof_theorem_5.2}. The result also holds from \cite[Prop. 26]{Khovanov_2000}, where the Khovanov homology of torus links $T(2,k_1)$ was computed, and the formulas to compute the Khovanov homology of a disjoint union of links \cite[Cor. 12]{Khovanov_2000}. 
\end{proof}

\begin{proof}[Proof of Cases  $\mathbf{C2}$ and $\mathbf{C3}$ of Theorem \ref{th:main}]
Let $\beta = \sigma_1^{k_1} \sigma_2^{k_2}$ with $k_1 \geq 3$ and $k_2 \geq 2$.  
If $k_2=2$ then $\beta$ belongs to $\mathbf{C2}$ and it belongs to $\mathbf{C3}$ otherwise. We follow the strategy in Section \ref{subsec:strategy_proof_theorem_5.2} and proceed by induction on the length of the braid $l(\beta) = k_1+k_2$. The base cases correspond to the braids $\sigma_1^3 \sigma_2^2$ and $\sigma_1^3 \sigma_2^3$, whose closures have the Khovanov homology presented in Tables \ref{tab:H(aaabb)} and \ref{tab:H(aaabbb)}. We can assume without loss of generality that $k_1 \geq k_2$ (otherwise, we conjugate the braid by $\Delta$, interchanging $\sigma_1$ and $\sigma_2$) and assume that the statement holds for those braids $\sigma_1^{k'_1} \sigma_2^{k'_2}$ of length $l'<k_1+k_2$. 
 
\begin{table}[!htb]
    \begin{minipage}[b]{.4\linewidth}
      \centering
          \begin{tiny}
          \begin{tblr}{|c||c|c|c|c|c|c|}
            \hline
            \backslashbox{\!$j$\!}{\!$i$\!} & $0$ & $1$ & $2$ & $3$ & $4$ & $5$ \\
            \hline
            \hline
            $14$  &   &   &   &   &   & $ \Rone $ \\
            \hline
            $12$  &   &   &   &   &   & $ \Tone{2} $ \\
            \hline
            $10$  &   &   &   & $ \Rone $ & $ \Rone $ &   \\
            \hline
            $8$  &   &   & $ \Rone $ & $ \Tone{2} $ &   &   \\
            \hline
            $6$  &   &   & $ \Rmor{2} $ &   &   &   \\
            \hline
            $4$  & $ \Rone $ &   &   &   &   &   \\
            \hline
            $2$  & $ \Rone $ &   &   &   &   &   \\
            \hline
           \end{tblr}
           \end{tiny}
    \caption{$H(\widehat{\sigma_1^3\sigma_2^2})$.}\label{tab:H(aaabb)}
    \end{minipage}
    \begin{minipage}[b]{.4\linewidth}
      \centering
        \begin{tiny}
        \begin{tblr}{|c||c|c|c|c|c|c|c|}
            \hline
            \backslashbox{\!$j$\!}{\!$i$\!} & $0$ & $1$ & $2$ & $3$ & $4$ & $5$ & $6$ \\
            \hline
            \hline
            $17$  &   &   &   &   &   &   & $ \Rone $ \\
            \hline
            $15$  &   &   &   &   &   & $ \Rone $ & $ \Tone{2} $ \\
            \hline
            $13$  &   &   &   &   &   & $ \Rone \oplus \Tone{2} $ &   \\
            \hline
            $11$  &   &   &   & $ \Rmor{2} $ & $ \Rone $ &   &   \\
            \hline
            $9$  &   &   &   & $ \Tmor{2}{2} $ &   &   &   \\
            \hline
            $7$  &   &   & $ \Rmor{2} $ &   &   &   &   \\
            \hline
            $5$  & $ \Rone $ &   &   &   &   &   &   \\
            \hline
            $3$  & $ \Rone $ &   &   &   &   &   &   \\
            \hline
        \end{tblr}
        \end{tiny}
    \caption{$H(\widehat{\sigma_1^3\sigma_2^3})$.}\label{tab:H(aaabbb)}
    \end{minipage}
    \end{table}

Following Section \ref{subsec:strategy_proof_theorem_5.2} we smooth the crossing corresponding to the first $\sigma_1$ occurrence in $w$ to obtain diagrams $D_A$ and $D_B$ shown in Figure~\ref{fig:Th1_C3}. Diagram $D_A$ corresponds to the standard diagram of the closure of the braid $\sigma_1^{k_1-1}\sigma_2^{k_2}$, which satisfies the statement by the induction hypothesis.

    \begin{figure}[h!]
    \centering
    \includegraphics[scale=0.24]{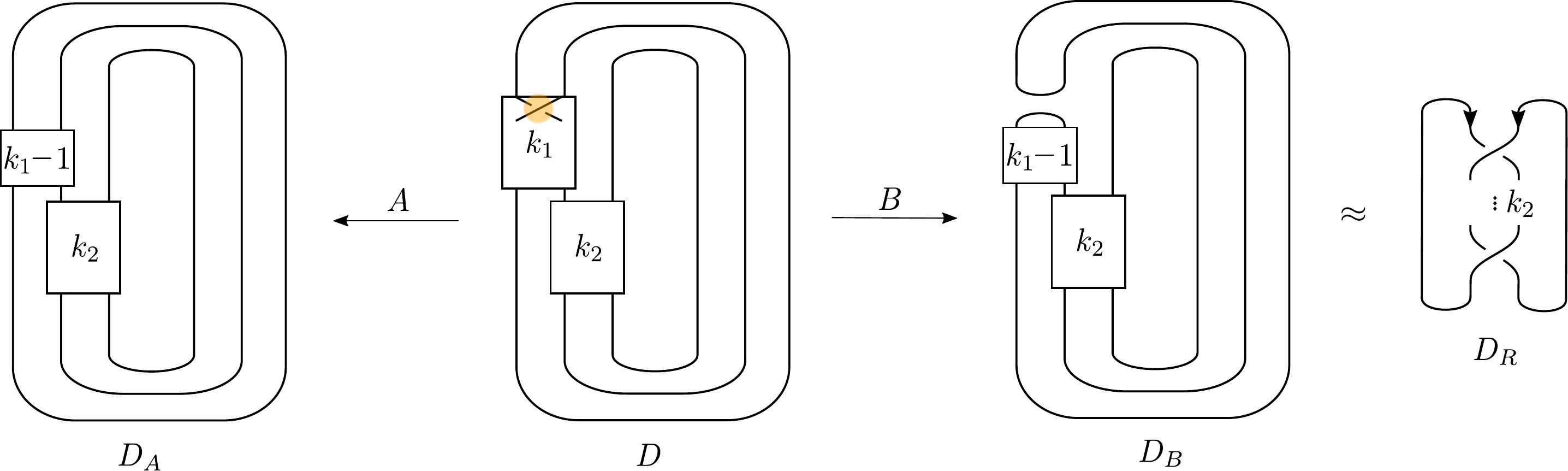}
    \caption{$A$ and $B$-smoothing of a crossing in $D$.}
    \label{fig:Th1_C3}
    \end{figure}
    
Observe that diagrams $D_B$ and $D_R$ are equivalent diagrams of the torus link $T(2,k_2)$, and we can orient them in such a way that $D_R$ becomes positive (and therefore $A$-adequate, as $D$), and we get
$$w(D_B)=k_2-k_1+1, \quad \underline{i}(D_B)=0, \quad \underline{j}(D_B) = j_{\min}(D_R)=k_2-2, \quad w(D)=k_1+k_2 \quad \underline{j}(D)= k_1+k_2-3.$$

We can rewrite inequalities in \eqref{inequalities_proof_Th_5.2} by using the above parameters. Moreover, we need to prove them for $i=0,1,2,3$ and $j=k_1+k_2-3,\, k_1+k_2-1, \, k_1+k_2+1$, so it is enough to prove them for maximum values of $i$ and $j$ :
    \begin{equation}\label{eq:inequality_i_C2_C3} \begin{array}{crcl}
     &  \frac{1}{2}(w(D_B)-w(D)+1) + i & < & \underline{i}(D_B) \\
    \Leftrightarrow  &  \frac{1}{2}( -2k_1+2) +3 & < & 0 \\
     \Leftrightarrow &   4 & < & k_1 \\
    \end{array} 
    \end{equation}
    and
    \begin{equation}\label{eq:inequality_j_C2_C3} \begin{array}{crcl}
     &  \frac{1}{2}[3(w(D_B)-w(D))-1] + j & < & \underline{j}(D_B) \\
    \Leftrightarrow  &  \frac{1}{2}( -6k_1+2) +k_1+k_2+1 & < & k_2-2 \\
     \Leftrightarrow &   4 & < & 2k_1 \\
    \end{array} 
    \end{equation}

Since $k_1\geq 4$, the inequality (\ref{eq:inequality_j_C2_C3}) holds. However, inequality  (\ref{eq:inequality_i_C2_C3}) holds when $k_1 \geq 5$. The pathological cases are $(k_1,k_2) = (4,2)$ corresponding to a braid in the family $\mathbf{C2}$, and $(k_1,k_2) \in \{ (4,3),(4,4) \}$ corresponding to braids in $\mathbf{C3}$. For these three cases, we computed their Khovanov homology and check that they have the desired $\Lcorner_{4,3}$-shape (i.e., that corresponding to the families they belong to), as shown in Tables \ref{tab:H(aaaabb)}--\ref{tab:H(aaaabbbb)}.
 \end{proof}
 
\begin{table}[!htb]
    \begin{minipage}[b]{.4\linewidth}
      \centering
          \begin{tiny}
          \begin{tblr}{|c||c|c|c|c|c|c|c|}
                \hline
                \backslashbox{\!$j$\!}{\!$i$\!} & $0$ & $1$ & $2$ & $3$ & $4$ & $5$ & $6$ \\
                \hline
                \hline
                $17$  &   &   &   &   &   &   & $ \Rone $ \\
                \hline
                $15$  &   &   &   &   &   & $ \Rone $ & $ \Rone $ \\
                \hline
                $13$  &   &   &   &   & $ \Rone $ & $ \Tone{2} $ &   \\
                \hline
                $11$  &   &   &   & $ \Rone $ & $ \Rmor{2} $ &   &   \\
                \hline
                $9$  &   &   & $ \Rone $ & $ \Tone{2} $ &   &   &   \\
                \hline
                $7$  &   &   & $ \Rmor{2} $ &   &   &   &   \\
                \hline
                $5$  & $ \Rone $ &   &   &   &   &   &   \\
                \hline
                $3$  & $ \Rone $ &   &   &   &   &   &   \\
                \hline
            \end{tblr}
            \end{tiny}
    \caption{$H(\widehat{\sigma_1^4\sigma_2^2})$.}\label{tab:H(aaaabb)}
    \end{minipage}
    \begin{minipage}[b]{.4\linewidth}
      \centering
          \begin{tiny}
          \begin{tblr}{|c||c|c|c|c|c|c|c|c|}
               \hline
                \backslashbox{\!$j$\!}{\!$i$\!} & $0$ & $1$ & $2$ & $3$ & $4$ & $5$ & $6$ & $7$ \\
                \hline
                \hline
                $20$  &   &   &   &   &   &   &   & $ \Rone $ \\
                \hline
                $18$  &   &   &   &   &   &   & $ \Rone $ & $ \Tone{2} $ \\
                \hline
                $16$  &   &   &   &   &   & $ \Rone $ & $ \Rone \oplus \Tone{2} $ &   \\
                \hline
                $14$  &   &   &   &   & $ \Rone $ & $ \Rone \oplus \Tone{2} $ &   &   \\
                \hline
                $12$  &   &   &   & $ \Rmor{2} $ & $ \Rmor{2} $ &   &   &   \\
                \hline
                $10$  &   &   &   & $ \Tmor{2}{2} $ &   &   &   &   \\
                \hline
                $8$  &   &   & $ \Rmor{2} $ &   &   &   &   &   \\
                \hline
                $6$  & $ \Rone $ &   &   &   &   &   &   &   \\
                \hline
                $4$  & $ \Rone $ &   &   &   &   &   &   &   \\
                \hline
           \end{tblr}
           \end{tiny}
    \caption{$H(\widehat{\sigma_1^4\sigma_2^3})$.}\label{tab:H(aaaabbb)}
    \end{minipage}
\end{table}

\begin{table}[!htb]
       \begin{tiny}
       \begin{tblr}{|c||c|c|c|c|c|c|c|c|c|}
            \hline
            \backslashbox{\!$j$\!}{\!$i$\!} & $0$ & $1$ & $2$ & $3$ & $4$ & $5$ & $6$ & $7$ & $8$ \\
            \hline
            \hline
            $23$  &   &   &   &   &   &   &   &   & $ \Rone $ \\
            \hline
            $21$  &   &   &   &   &   &   &   & $ \Rmor{2} $ & $ \Rone $ \\
            \hline
            $19$  &   &   &   &   &   &   & $ \Rone $ & $ \Tmor{2}{2} $ &   \\
            \hline
            $17$  &   &   &   &   &   & $ \Rone $ & $ \Rmor{2} \oplus \Tone{2} $ &   &   \\
            \hline
            $15$  &   &   &   &   & $ \Rmor{2} $ & $ \Rone \oplus \Tone{2} $ &   &   &   \\
            \hline
            $13$  &   &   &   & $ \Rmor{2} $ & $ \Rmor{3} $ &   &   &   &   \\
            \hline
            $11$  &   &   &   & $ \Tmor{2}{2} $ &   &   &   &   &   \\
            \hline
            $9$  &   &   & $ \Rmor{2} $ &   &   &   &   &   &   \\
            \hline
            $7$  & $ \Rone $ &   &   &   &   &   &   &   &   \\
            \hline
            $5$  & $ \Rone $ &   &   &   &   &   &   &   &   \\
            \hline
        \end{tblr} \end{tiny}
    \caption{$H(\widehat{\sigma_1^4\sigma_2^4})$.}\label{tab:H(aaaabbbb)}
\end{table}

\begin{proof}[Proof of Case  $\mathbf{C4b}$ of Theorem \ref{th:main}]
Let $\beta$ be a braid with $\inf_s(\beta)=0$ and $\operatorname{sl}(\beta) = 4$. Then we can consider the diagram $D$ associated to a word $w$ representing the link $\widehat{\beta}$. We can assume that $w = \sigma_1^{k_1} \sigma_2^{k_2} \cdots \sigma_1^{k_{2t-1}} \sigma_2^{k_{2t}}$, with $t\geq 2$ and $k_1 = \max\{k_1, \ldots, k_{2t}\}$ (recall that conjugating a braid by $\Delta$ permutes $\sigma_1$ and $\sigma_2$). Moreover we can assume that $k_i\geq 2$ for $i=1, \ldots, 2t$, since otherwise we could conjugate $\beta$ to obtain a braid with infimum greater than $0$, yielding a contradiction with the fact that $\mbox{inf}_s(\beta) = 0$.

Following the general strategy described in Section \ref{subsec:strategy_proof_theorem_5.2}, we proceed by induction on $l=l(\beta)$. Since $\operatorname{sl}(\beta)\geq 4$, the base case corresponds to $l=8$, and thus $\beta = \sigma_1^2 \sigma_2^2\sigma_1^2 \sigma_2^2$. The link $\widehat{\sigma_1^2 \sigma_2^2\sigma_1^2 \sigma_2^2}$ satisfies the statement, as shown in Table \ref{tab:H(aabbaabb)}. Let $l\geq 9$.

\begin{table}[!htb]
\begin{tiny}
    \begin{tblr}{|c||c|c|c|c|c|c|c|c|c|}
        \hline
        \backslashbox{\!$j$\!}{\!$i$\!} & $0$ & $1$ & $2$ & $3$ & $4$ & $5$ & $6$ & $7$ & $8$ \\
        \hline
        \hline
        $21$  &   &   &   &   &   &   &   &   & $ \Rone $ \\
        \hline
        $19$  &   &   &   &   &   &   &   &   & $ \Rone $ \\
        \hline
        $17$  &   &   &   &   &   & $ \Rone $ & $ \Rone $ &   &   \\
        \hline
        $15$  &   &   &   &   &   & $ \Tone{2} $ & $ \Tone{2} $ &   &   \\
        \hline
        $13$  &   &   &   & $ \Rone $ & $ \Rmor{3} $ & $ \Rone $ &   &   &   \\
        \hline
        $11$  &   &   &   & $ \Tone{2} $ & $ \Rmor{2} $ &   &   &   &   \\
        \hline
        $9$  &   &   & $ \Rone $ &   &   &   &   &   &   \\
        \hline
        $7$  & $ \Rone $ &   &   &   &   &   &   &   &   \\
        \hline
        $5$  & $ \Rone $ &   &   &   &   &   &   &   &   \\
        \hline
    \end{tblr}
    \end{tiny}
\caption{$H(\widehat{\sigma_1^2\sigma_2^2\sigma_1^2\sigma_2^2})$.}\label{tab:H(aabbaabb)}
\end{table}

\newpage

In Figure~\ref{fig:C4b} we show diagrams $D$, $D_A$ and $D_B$. We distinguish two cases: 
\begin{itemize}[topsep=0pt]
\item If $k_1 > 2$, then $D_A$ is the standard diagram of the closure of a braid $\gamma$ with $\inf_s(\gamma)=0$, $\operatorname{sl}(\gamma)\geq 4$ and $l(\gamma) = l-1$, and therefore by the induction hypothesis the homology of $\widehat{\gamma}$ has the expected $\Lcorner_{4,3}$-shape. 

\item If $k_1=2$, then then $D_A$ is the standard diagram of the closure of a braid $\gamma$ which is conjugate to a braid (not equal to $\Delta$) with a strictly positive infimum (i.e., a braid in $\mathbf{C4a}$), and its closure also provides the expected $\Lcorner_{4,3}$-shape, as proved in Section \ref{subsection:inf_s>0}. 
\end{itemize}

    \begin{figure}[h!]
    \centering
    \includegraphics[scale=0.24]{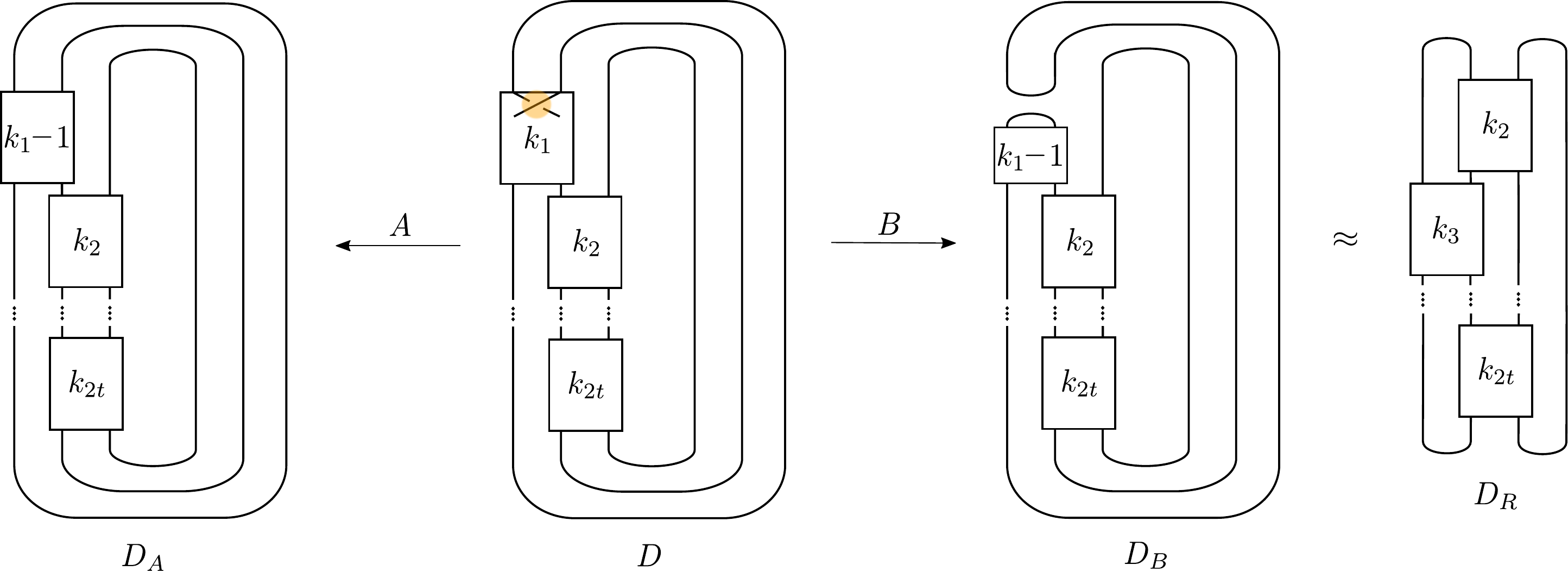}
    \caption{$A$ and $B$-smoothing of a crossing in $D$.}
    \label{fig:C4b}
    \end{figure}

To analyze inequalities in \eqref{inequalities_proof_Th_5.2}, we proceed as in the proof of subcase $\mathbf{C4a.4}$ setting $u=0$. Although $u=0$ is not considered in that subcase, the proof is analogous, since the parameters $t,k_1 \geq 2$ and at least one of them is greater than or equal to 3 (since $l \geq 9$), so the final inequalities remain true.
\end{proof}

The only cases of braids with summit infimum equal to zero that have not been covered so far correspond to those conjugate to $1$, $\sigma_1$, $\sigma_1^2$, $\sigma_1\sigma_2$ and $\sigma_1^2\sigma_2^2$, which are braids in the set $N$ in the statement of Theorem~\ref{th:main}.

\section{The $\Lcorner_{4\lfloor p/2\rfloor+4,3\lfloor p/2\rfloor+3}$-shape for closed 3-braids with infimum $p \geq 0$}\label{sec:final} 

In this section we will use Theorem \ref{th:main} and a result by T. Jaeger in \cite{Jaeger_2011} to obtain explicit expressions for the first $4\lfloor p/2\rfloor + 4$ columns and $3\lfloor p/2\rfloor + 3$ lowest rows of the Khovanov homology of any closed positive $3$-braid, where $p \geq 0$ is the infimum of the braid.

\begin{theorem}[{\cite[Th. 4.4.1]{Jaeger_2011}}]\label{th:Jaeger}
Given a word $w$ representing a positive 3-braid, consider the braid word
$$
r(w)= \begin{cases} 1 & \text { if } w=1, \\ \sigma_1 & \text { if } \operatorname{sl}(w)=1, \\ \sigma_1 \sigma_2 & \text { if } \operatorname{sl}(w) \geq 2. \end{cases}
$$

\begin{enumerate}[label=(\roman*)]
    \item  There exists a bigraded $\mathbb{Z}$-module $M_w$ such that\footnote{For any bigraded $\mathbb{Z}$-module $X=X^{i,j}$, $X\{ k \}$ (resp. $X[k]$) denotes the $\mathbb{Z}$-module given by $(X\{k\})^{i,j}=X^{i,j-k}$ (resp. $(X[k])^{i,j}=X^{i-k,j}$).}
$$
H(\widehat{w}) \cong H(\widehat{r(w)})\{l(w)-l(r(w))\}  \oplus M_w.
$$

\item It holds that $H(\widehat{\Delta^2 w}) \cong  H(\widehat{\Delta^2 r(w)})\{l(w)-l(r(w))\} \oplus M_w[4]\{12\} $. 
\end{enumerate}

\end{theorem}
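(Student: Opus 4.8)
The plan is to lift the problem to the homotopy category of complexes over the Bar--Natan cobordism category $\mathcal{BN}_3$ of crossingless $(3,3)$-tangles, where the complex $\mathcal{C}(\beta)$ of a positive $3$-braid is built by tensoring the elementary complexes $\mathcal{C}(\sigma_i)=\big(\,\mathbf{1}\to e_i\,\big)$ (up to shift), and where closing the tangle up (the Markov trace) and taking homology recovers $H(\widehat{\beta})$. The five crossingless $(3,3)$-tangles are the identity $\mathbf{1}$ and the through-degree-one diagrams $e_1,e_2,e_1e_2,e_2e_1$, and the whole argument is organized around the \emph{through-degree} filtration, with $\mathbf{1}$ of through-degree $3$ and the other four of through-degree $1$. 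The closure functor is additive and sends $\{k\}\mapsto\{k\}$, $[k]\mapsto[k]$ and $\oplus\mapsto\oplus$, so it suffices to establish both statements at the level of $\mathcal{C}(\beta)$ and then apply it.

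For part (i) the key structural observation is that for a \emph{positive} word $w$ the identity tangle $\mathbf{1}$ occurs in $\mathcal{C}(\beta)$ exactly once, in homological degree $0$ (the oriented resolution of a positive braid is the identity, and a single $0\!\to\!1$ change can never raise through-degree), so the through-degree-one terms form a genuine subcomplex $C_w\subseteq\mathcal{C}(\beta)$ with one-dimensional quotient $\mathbf{1}$. I would then simplify $\mathcal{C}(\beta)$ by repeated \emph{delooping} ($e_ie_i\simeq e_i\{1\}\oplus e_i\{-1\}$) together with the Temperley--Lieb relations $e_ie_je_i=e_i$ and \emph{Gaussian elimination} of the resulting isomorphism components, reducing $\mathcal{C}(\beta)$ to a complex supported on the five objects above. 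The claim to prove is that in this reduced form the summand attached to the unique copy of $\mathbf{1}$ is always homotopy equivalent to $\mathcal{C}(r(w))\{l(w)-l(r(w))\}$ — i.e. the way $\mathbf{1}$ is glued to the through-degree-one part depends only on the coarse type $r(w)$ — yielding a splitting $\mathcal{C}(\beta)\simeq \mathcal{C}(r(w))\{l(w)-l(r(w))\}\oplus C_w'$ with $C_w'$ in through-degree one. Applying the closure functor and setting $M_w:=H\big(\widehat{C_w'}\big)$ gives exactly the isomorphism in (i).

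For part (ii) I would use that $\Delta^2$ is central in $\mathbb{B}_3$, so $\mathcal{C}(\Delta^2 w)\simeq \mathcal{C}(\Delta^2)\otimes\big(\mathcal{C}(r(w))\{l(w)-l(r(w))\}\oplus C_w'\big)$, and treat the two pieces separately. On the first piece one gets $\mathcal{C}(\Delta^2 r(w))\{l(w)-l(r(w))\}$ directly from $\mathcal{C}(\Delta^2)\otimes\mathcal{C}(r(w))\simeq\mathcal{C}(\Delta^2 r(w))$ and additivity of the tensor product. The heart of (ii) is the identity $\mathcal{C}(\Delta^2)\otimes X\simeq X[4]\{12\}$ for every through-degree-one object $X$: since $\Delta^2$ is central and acts by a scalar on the two-dimensional irreducible Temperley--Lieb representation, its reduced complex must act on the through-degree-one part as a pure shift, and a finite delooping/Gaussian-elimination computation of $\mathcal{C}(\Delta^2)$ absorbed into a cap pins this shift down to $[4]\{12\}$. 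By naturality of this equivalence, feeding the complex $C_w'$ through it gives $C_w'[4]\{12\}$, whose closure contributes $M_w[4]\{12\}$; assembling the two pieces yields (ii).

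The main obstacle is the splitting claim in part (i): producing an \emph{honest direct summand} $\mathcal{C}(r(w))\{\cdot\}$ rather than merely the short exact sequence $0\to C_w\to\mathcal{C}(\beta)\to\mathbf{1}\to 0$, whose connecting map $\mathbf{1}\to C_w[1]$ is generically nonzero (indeed $\mathcal{C}(\beta)$ never splits as $\mathbf{1}\oplus C_w$). Controlling this gluing map uniformly in $w$ — showing it factors through the through-degree-one tail of $r(w)$ and that this tail splits off — is where the real work lies, and I expect to handle it by induction on $l(w)$ via the simplification above, tracking how each extra generator modifies the reduced complex. The secondary technical point is the explicit value $[4]\{12\}$ of the full-twist shift on through-degree one; the quantum part $\{12\}$ is forced by the decategorified eigenvalue of $\Delta^2$ on the two-dimensional representation ($q^{12}$, consistent with the length increase contributed by $\Delta^2$ and with the sample tables), and the homological part $[4]$ is then fixed by the finite homotopy computation.
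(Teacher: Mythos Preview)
The paper does not supply its own proof of this statement: Theorem~\ref{th:Jaeger} is quoted from Jaeger's thesis \cite[Th.~4.4.1]{Jaeger_2011} and used as a black box in Section~\ref{sec:final}. There is therefore no in-paper argument to compare your proposal against.

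That said, your outline is very much in the spirit of how such a result is proved, and---as far as one can infer from the statement and the surrounding literature---close to Jaeger's own approach: work in the homotopy category of complexes over Bar--Natan's $(3,3)$-tangle category, exploit the through-degree filtration to isolate the unique identity-tangle term of $\mathcal{C}(w)$ (which for a positive word sits in homological degree $0$), and for (ii) show that tensoring with $\mathcal{C}(\Delta^2)$ acts on through-degree-one complexes by the shift $[4]\{12\}$. You have also correctly located the two nontrivial steps: the honest splitting in (i), and the explicit determination of the full-twist shift in (ii).

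One remark that may save you effort on (i): once you grant the known facts about the first two columns of Khovanov homology of closed positive braids (recalled in the introduction of the paper), part (i) is essentially a tautology at the level of homology rather than a tangle-level splitting. Indeed, $H(\widehat{r(w)})$ is concentrated in homological degree $0$, and for a non-split closed positive braid one has $H^{0,*}(\widehat{w})\cong H(\widehat{r(w)})\{l(w)-l(r(w))\}$ and $H^{1,*}(\widehat{w})=0$; so one may simply \emph{define} $M_w:=\bigoplus_{i\geq 2}H^{i,*}(\widehat{w})$ and the direct-sum decomposition in (i) holds. The Bar--Natan machinery is only genuinely needed for (ii), where you must transport $M_w$ through multiplication by $\Delta^2$. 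There your plan---compute $\mathcal{C}(\Delta^2)\otimes e_i$ up to homotopy via delooping and Gaussian elimination and read off the shift $[4]\{12\}$---is the right one; the decategorified eigenvalue argument you give pins down the quantum shift but, as you note, the homological shift $[4]$ has to come from the finite homotopy computation.
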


Statement $(i)$ of Theorem $\ref{th:Jaeger}$ tells us that the Khovanov homology of a positive $3$-braid represented by $w$ can be decomposed as a direct sum of two $\mathbb{Z}$-modules. One of those is the Khovanov homology of the closure of $r(w)$ (with a proper shifting in the $j$-degree), which is an $n$-component trivial link, where $n \in \{ 1,2,3\}$ depends on $w$; in fact, $\widehat{r(w)}$ is the unknot, provided $w \neq 1, \sigma_i^{k}$. In any case, $M_w$ comprises \emph{almost all} the Khovanov homology of $\widehat{w}$, except for ``some pieces'' collected in $H(\widehat{r(w)})$. Statement $(ii)$ enables us to determine the Khovanov homology of $\widehat{\Delta^2 w}$ from the homology of $\widehat{w}$, by adding a suitable shift of  $H(\widehat{\Delta^2 r(w)})$ to a suitable shift of $M_w$.

\begin{remark}
    The underlying principle in deriving $H(\widehat{\Delta^2w})$ from $H(\widehat{w})$ through Theorem \ref{th:Jaeger} consists of removing certain pieces (specifically $H(\widehat{r(w)})$) from $H(\widehat{w})$ and then incorporating the block represented by $H(\widehat{\Delta^2r(w)})$. It is important to observe that the homology of $\widehat{r(w)}$ is concentrated at $i=0$ (see Tables \ref{tab:H(1)}, \ref{tab:H(a)} and \ref{tab:H(ab)}), and the smallest $j$ for which $H^{i,j}(\widehat{r(w)})\{l(w)-l(r(w))\} \neq 0$ coincides with $\underline{j}(\widehat{w})$. Therefore, the pieces that are removed correspond to the first column of $H(\widehat{w})$. In addition, it is worth noting that the shifting in $i$-degree in statement $(ii)$ implies that there are no nontrivial groups overlapping (as a direct sum), since $\overline{i}(\widehat{\Delta^2 r(w)}) \leq 5$ in any case (see Tables \ref{tab:abaaba}--\ref{tab:abaabaab}) and the column $i=1$ of $H(\widehat{w})$ does not contain non-trivial groups (see Theorem \ref{th:main} or \cite{Stosic_2005}). 
\end{remark}

\begin{table}[!htb]
 \begin{minipage}[b]{.3\linewidth}
      \centering
       \captionsetup{width=.7\linewidth}
\begin{tiny}
\begin{tblr}{|c||c|c|c|c|c|}
\hline
\backslashbox{\!$j$\!}{\!$i$\!} & $0$ & $1$ & $2$ & $3$ & $4$ \\
\hline
\hline
$13$  &   &   &   &   & $ \Rmor{2} $ \\
\hline
$11$  &   &   &   & $ \Rone $ & $ \Rmor{3} $ \\
\hline
$9$  &   &   &   & $ \Tone{2} $ & $ \Rone $ \\
\hline
$7$  &   &   & $ \Rone $ &   &   \\
\hline
$5$  & $ \Rone $ &   &   &   &   \\
\hline
$3$  & $ \Rone $ &   &   &   &   \\
\hline
\end{tblr} \end{tiny}
\caption{$H(\widehat{\Delta^2}).$}\label{tab:abaaba}
\end{minipage}
\begin{minipage}[b]{.3\linewidth}
      \centering
       \captionsetup{width=.8\linewidth}
\begin{tiny}
\begin{tblr}{|c||c|c|c|c|c|c|}
\hline
\backslashbox{\!$j$\!}{\!$i$\!} & $0$ & $1$ & $2$ & $3$ & $4$ & $5$ \\
\hline
\hline
$16$  &   &   &   &   &   & $ \Rone $ \\
\hline
$14$  &   &   &   &   &   & $ \Tone{2} $ \\
\hline
$12$  &   &   &   & $ \Rone $ & $ \Rmor{2} $ &   \\
\hline
$10$  &   &   &   & $ \Tone{2} $ & $ \Rone $ &   \\
\hline
$8$  &   &   & $ \Rone $ &   &   &   \\
\hline
$6$  & $ \Rone $ &   &   &   &   &   \\
\hline
$4$  & $ \Rone $ &   &   &   &   &   \\
\hline
\end{tblr} \end{tiny}
\caption{$H(\widehat{\Delta^2 \sigma_1}).$}\label{tab:abaabaa}
\end{minipage}
\hspace{0.15cm}
\begin{minipage}[b]{.32\linewidth}
      \centering
       \captionsetup{width=.8\linewidth}
\begin{tiny}
\begin{tblr}{|c||c|c|c|c|c|c|}
\hline
\backslashbox{\!$j$\!}{\!$i$\!} & $0$ & $1$ & $2$ & $3$ & $4$ & $5$ \\
\hline
\hline
$17$  &   &   &   &   &   & $ \Rone $ \\
\hline
$15$  &   &   &   &   &   & $ \Rone $ \\
\hline
$13$  &   &   &   & $ \Rone $ & $ \Rone $ &   \\
\hline
$11$  &   &   &   & $ \Tone{2} $ & $ \Rone $ &   \\
\hline
$9$  &   &   & $ \Rone $ &   &   &   \\
\hline
$7$  & $ \Rone $ &   &   &   &   &   \\
\hline
$5$  & $ \Rone $ &   &   &   &   &   \\
\hline
\end{tblr} \end{tiny}
\caption{$H(\widehat{\Delta^2 \sigma_1 \sigma_2}).$}\label{tab:abaabaab}
\end{minipage}
\end{table}

\begin{example} Consider the positive $3$-braid $\beta$ represented by $w=\sigma_1^5 \sigma_2^4$. We have that $l(w)=9$, $r(w)=\sigma_1 \sigma_2$ and $l(r(w))=2$. By statement $(i)$ of Theorem \ref{th:Jaeger}, we can decompose the Khovanov homology of $\widehat{\beta}$ as
\[ H(\widehat{\beta})= H(\widehat{\sigma_1 \sigma_2})\{7\} \oplus M_w . \]
The $\mathbb{Z}$-module $M_w$ corresponds to the yellow block in Table \ref{tab:H(aaaaabbbb)}. Table \ref{tab:H(Delta2_aaaaabbbb)} shows the Khovanov homology of $\widehat{\Delta^2 \beta}$, which can be computed by adding $H(\widehat{\Delta^2 \sigma_1 \sigma_2})\{ 7 \}$ (grey block) to $M_w[4]\{12\}$ (yellow block). This illustrates statement $(ii)$ of Theorem \ref{th:Jaeger}.
    \begin{table}[!htb]
    \begin{minipage}{\linewidth}
      \centering
          \begin{tiny}
          \begin{tblr}{colspec={|c||c|c|c|c|c|c|c|c|c|c|},cell{2-10}{4-11} = {yellow}}
\hline
\backslashbox{\!$j$\!}{\!$i$\!} & $0$ & $1$ & $2$ & $3$ & $4$ & $5$ & $6$ & $7$ & $8$ & $9$ \\
\hline
\hline
$26$  &   &   &   &   &   &   &   &   &   & $ \Rone $ \\
\hline
$24$  &   &   &   &   &   &   &   &   & $ \Rone $ & $ \Tone{2} $ \\
\hline
$22$  &   &   &   &   &   &   &   & $ \Rmor{2} $ & $ \Rone \oplus \Tone{2} $ &   \\
\hline
$20$  &   &   &   &   &   &   & $ \Rone $ & $ \Rone \oplus \Tmor{2}{2} $ &   &   \\
\hline
$18$  &   &   &   &   &   & $ \Rmor{2} $ & $ \Rmor{2} \oplus \Tone{2} $ &   &   &   \\
\hline
$16$  &   &   &   &   & $ \Rone $ & $ \Rone \oplus \Tmor{2}{2} $ &   &   &   &   \\
\hline
$14$  &   &   &   & $ \Rmor{2} $ & $ \Rmor{3} $ &   &   &   &   &   \\
\hline
$12$  &   &   &   & $ \Tmor{2}{2} $ &   &   &   &   &   &   \\
\hline
$10$  &   &   & $ \Rmor{2} $ &   &   &   &   &   &   &   \\
\hline
$8$  & $ \Rone $ &   &   &   &   &   &   &   &   &   \\
\hline
$6$  & $ \Rone $ &   &   &   &   &   &   &   &   &   \\
\hline
\end{tblr}
           \end{tiny}
    \caption{$H(\widehat{\sigma_1^5\sigma_2^4})$.}\label{tab:H(aaaaabbbb)}
    \end{minipage}
    \end{table}
    
    \begin{table}[!htb]
        \begin{minipage}{\linewidth}
      \centering
          \begin{tiny}
          \begin{tblr}{colspec={|c||c|c|c|c|c|c|c|c|c|c|c|c|c|c|},cell{2-10}{8-15} = {yellow}, cell{9-15}{2-7}={grey}}
\hline
\backslashbox{\!$j$\!}{\!$i$\!} & $0$ & $1$ & $2$ & $3$ & $4$ & $5$ & $6$ & $7$ & $8$ & $9$ & $10$ & $11$ & $12$ & $13$ \\
\hline
\hline
$38$  &   &   &   &   &   &   &   &   &   &   &   &   &   & $ \Rone $ \\
\hline
$36$  &   &   &   &   &   &   &   &   &   &   &   &   & $ \Rone $ & $ \Tone{2} $ \\
\hline
$34$  &   &   &   &   &   &   &   &   &   &   &   & $ \Rmor{2} $ & $ \Rone \oplus \Tone{2} $ &   \\
\hline
$32$  &   &   &   &   &   &   &   &   &   &   & $ \Rone $ & $ \Rone \oplus \Tmor{2}{2} $ &   &   \\
\hline
$30$  &   &   &   &   &   &   &   &   &   & $ \Rmor{2} $ & $ \Rmor{2} \oplus \Tone{2} $ &   &   &   \\
\hline
$28$  &   &   &   &   &   &   &   &   & $ \Rone $ & $ \Rone \oplus \Tmor{2}{2} $ &   &   &   &   \\
\hline
$26$  &   &   &   &   &   &   &   & $ \Rmor{2} $ & $ \Rmor{3} $ &   &   &   &   &   \\
\hline
$24$  &   &   &   &   &   & $ \Rone $ &   & $ \Tmor{2}{2} $ &   &   &   &   &   &   \\
\hline
$22$  &   &   &   &   &   & $ \Rone $ & $ \Rmor{2} $ &   &   &   &   &   &   &   \\
\hline
$20$  &   &   &   & $ \Rone $ & $ \Rone $ &   &   &   &   &   &   &   &   &   \\
\hline
$18$  &   &   &   & $ \Tone{2} $ & $ \Rone $ &   &   &   &   &   &   &   &   &   \\
\hline
$16$  &   &   & $ \Rone $ &   &   &   &   &   &   &   &   &   &   &   \\
\hline
$14$  & $ \Rone $ &   &   &   &   &   &   &   &   &   &   &   &   &   \\
\hline
$12$  & $ \Rone $ &   &   &   &   &   &   &   &   &   &   &   &   &   \\
\hline
\end{tblr}
            \end{tiny}
    \caption{$H(\widehat{\Delta^2\sigma_1^5\sigma_2^4})$.}\label{tab:H(Delta2_aaaaabbbb)}
    \end{minipage}
    \end{table}
\end{example}

\begin{mythm}{1.2}
    Let $\beta$ be a positive 3-braid and write $p=\inf(\beta)$. Define the quantities $\mathbf{i}(p) =  4 \lfloor \frac{p}{2} \rfloor + 3$ and $\mathbf{j}(p)= \underline{j}(\widehat{\beta}) +6 \lfloor \frac{p}{2} \rfloor + 4$. Then, $H^{i,j}(\widehat{\beta})$ for every $(i,j)$ with $i=0,1, \dots, \mathbf{i}(p)$ or $j=\underline{j}(\widehat{\beta}), \underline{j}(\widehat{\beta}) + 2, \dots, \mathbf{j}(p)$ is as shown in one of the Tables \ref{tab:Delta_p_even}--\ref{tab:Delta_p_C4}. The precise table corresponding to $\widehat{\beta}$ can be deduced from its normal form.
\end{mythm}
\begin{proof}
    We can write $\beta$ as $\Delta^{2 \lfloor \frac{p}{2} \rfloor} \gamma$ such that $\gamma=\Delta^{\inf (\gamma)} a_1 \cdots a_{\ell}$ is written in normal form, with $\inf(\gamma) \in \{ 0, 1\}$. The braid $\gamma$  must be conjugate to a braid $\gamma'$ fitting into one of the families $\mathbf{C}r$, $r=\mathbf{1}, \mathbf{2}, \mathbf{3}, \mathbf{4}$, or into $\{ 1, \sigma_1, \sigma_1^2, \sigma_1\sigma_2, \sigma_1^2\sigma_2^2, \Delta\}$. As $\Delta^2$ commutes with every generator in the braid group, $\beta$ must be conjugate to $\beta' = \Delta^{2 \lfloor \frac{p}{2} \rfloor} \gamma'$, their closures are equivalent and they have the same Khovanov homology. 
    
    If $\gamma'$ is not one of the exceptional cases, the $\Lcorner_{4,3}$-shape of the Khovanov homology of its closure is determined by Theorem \ref{th:main}. After applying $\lfloor \frac{p}{2} \rfloor$ times Theorem \ref{th:Jaeger}  we are done, since each time it is applied 4 more columns and 3 more rows are determined. Then we would obtain the $\Lcorner_{4\lfloor p/2\rfloor+4,3\lfloor p/2\rfloor+3}$-shape corresponding to $\beta'$, which aligns with the degrees described in the statement. 

    If $\gamma'$ is an exceptional case, we do not need to use Theorem \ref{th:main}, but use the Khovanov homology of its closure directly and apply again $\lfloor \frac{p}{2} \rfloor$ times Theorem \ref{th:Jaeger}.
\end{proof}

\begin{myprop}{1.4}
The $\Lcorner_{4\lfloor p/2\rfloor+4,3\lfloor p/2\rfloor+3}$-shape of the Khovanov homology of the closure of a 3-braid with summit infimum $p\geq 0$ can be computed in linear time.
\end{myprop}
\begin{proof}
Suppose we are given a braid word of length $k$. That is, a word in the letters $\{\sigma_1^{\pm 1}, \sigma_2^{\pm 1}\}$. The standard way to compute its left normal form starts with the following steps:
\begin{itemize}
\item Replace each $\sigma_1^{-1}$ with $\Delta^{-1}\sigma_1\sigma_2$, and each $\sigma_2^{-1}$ with $\Delta^{-1}\sigma_2\sigma_1$.
    
\item Slide each instance of $\Delta^{-1}$ to the left, conjugating all the letters on its left.
\end{itemize}

Since conjugation by $\Delta^{-1}$ swaps $\sigma_1$ and $\sigma_2$, this process can be done in time $O(k)$, and we obtain an equivalent braid word of the form $\Delta^p s_1\cdots s_t$, where $p\in \mathbb Z$, $s_i\in\{\sigma_1,\sigma_2\}$ for every $i=1,\ldots, t$, and $t\leq 2k$. This is not yet the left normal form of the input braid.

We will collect consecutive instances of $\sigma_1$ and consecutive instances of $\sigma_2$, to write braid words as $\Delta^p \sigma_{[i_1]}^{k_1}\cdots \sigma_{[i_m]}^{k_m}$, where $\sigma_{[i_j]}\in \{\sigma_1,\sigma_2\}$ and $\sigma_{[i_j]}\neq \sigma_{[i_{j+1}]}$ for every $j=1,\ldots,m-1$. We know that such a word will be in left normal form if and only if $k_2,\ldots,k_{m-1}\geq 2$.  We will store such a word as a list of integers $(p,a,[k_1,\ldots,k_m],b)$, where $a$ is the index of $\sigma_{[1]}$ and $b$ is the index of $\sigma_{[m]}$. The number $b$ is superfluous, as $b=a$ if $m$ is odd and $b=3-a$ if $m$ is even, but it will be helpful for explaining the forthcoming algorithm. In case the braid word is just $\Delta^p$, we will store it as $(p,0,[],0)$.

Suppose that we are given a braid word $(p,a,[k_1,\ldots,k_m],b)$ in left normal form, representing a braid $\alpha$, and let us denote $n_{\alpha}=k_1+\cdots+k_m$. Given $i\in \{1,2\}$, we will now see that we can compute the left normal form of $\alpha\sigma_i$ in constant time. Indeed, by the properties of normal forms, we have the following situations:
\begin{itemize}

\item If $n_{\alpha}=0$, the normal form of $\alpha\sigma_i$ is $(p,i,[1],i)$.

\item If $i=b$, the normal form of $\alpha\sigma_i$ is $(p,a,[k_1,\ldots,k_m+1],i)$.

\item If $i\neq b$, $k_m=1$ and $n_\alpha=1$, the normal form of $\alpha\sigma_i$ is $(p,a,[1,1],i)$.

\item If $i\neq b$, $k_m=1$ and $n_{\alpha}=2$, the normal form of $\alpha\sigma_i$ is $(p+1,0,[],0)$.

\item If $i\neq b$, $k_m=1$ and $n_{\alpha}>2$, the normal form of $\alpha\sigma_i$ is $(p+1,3-a,[k_1,\ldots,k_{m-1}-1],3-i)$.

\item If $i\neq b$ and $k_m>1$, the normal form of $\alpha\sigma_i$ is $(p,a,[k_1,\ldots,k_m,1],i)$.

\end{itemize}

Notice that in each case we just need to check the final letter of the normal form, the last exponent and whether the sum of exponents is either 1, 2, or greater. And we need to add/remove/modify at most 4 items of the list. Hence, this can be done in constant time.

Now we go back to our braid $\Delta^p s_1\cdots s_t$. We will compute its left normal form in time $O(t)=O(k)$. First, $\Delta^p s_1$ is already in left normal form. Now we assume we have computed the left normal form of $\Delta^p s_1\cdots s_{i-1}$, and we compute the left normal form of $\Delta^p s_1\cdots s_i$ in constant time, as we saw above. Hence, after $t$ steps we have computed the normal form of the whole braid in time $O(t)$.

Let $\Delta^p \sigma_{[1]}^{k_1}\cdots \sigma_{[m]}^{k_m}$ be the left normal form of the input braid $\beta$, which has been computed in time $O(k)$. We notice that $n_{\beta}\leq 2k$. We can now apply the conjugations explained in Proposition~\ref{prop:representatives_conjugacy_classes_5_families}, to transform the input braid $\beta$ into a conjugate which belongs to one of the families $\Lambda_i$. We assume that the initial letter of the non-$\Delta$ part, if it exists, is $\sigma_1$ (if it were $\sigma_2$, we assume that the input braid is the conjugate of $\beta$ by $\Delta$). Hence, the left normal form of $\beta$ is stored as $(p,1,[k_1,\ldots,k_m],b)$, where $b=1$ if $m$ is odd and $b=2$ if $m$ is even. To simplify the expessions, we will just store $(p,[k_1,\ldots,k_m])$, and also the number $n_{\beta}=k_1+\cdots+k_m$ (which can be computed in time $O(k)$).

Now we follow the proof of Proposition~\ref{prop:representatives_conjugacy_classes_5_families}. If $n_\beta\leq 2$, either $\beta$ already belongs to some $\Lambda_i$ or it can be conjugated to a braid in $\Lambda_2$ in constant time. We can then assume that $n_{\beta}\geq 3$, and we consider several cases, either to check that $\beta$ belongs to some $\Lambda_i$ or to conjugate it to another braid $\beta'$ with $n_{\beta'} < n_{\beta}$.
\begin{enumerate}

\item If $m=1$ then $\beta\in \Lambda_2$.

\item If $m>1$:
\begin{itemize}
  
  \item If $m-p$ is odd, set $\beta'=(p,[k_1+k_m,k_2,\ldots,k_{m-1}])$, which belongs to either $\Lambda_2$, or $\Lambda_4$ or $\Lambda_5$.
  
  \item If $m-p$ is even, $k_1=1$ and $k_m=1$, set $\beta'=(p+1,[k_2-1,k_3,\ldots,k_{m-1}])$ and $n_{\beta'}=n_\beta-3$.
      
  \item If $m-p$ is even, $k_1=1$ and $k_m>1$, set $\beta'=(p+1,[k_2-1,k_3,\ldots,k_{m-1},k_{m}-1])$ and $n_{\beta'}=n_\beta-3$.
  
  \item If $m-p$ is even, $k_1>1$ and $k_m=1$, set $\beta'=(p+1,[k_1-1,k_2,\ldots,k_{m-1}-1])$ and $n_{\beta'}=n_\beta-3$.
  
  \item If $m-p$ is even, $k_1>1$ and $k_m>1$, then $\beta\in \Lambda_4$.
  
\end{itemize}
\end{enumerate}

Notice that each case can be computed in constant time, and either we obtain an element in some $\Lambda_i$, or we obtain an element the sum of whose exponents is 3 units smaller. Since this sum started being $n\leq 2k$, iterating this process we will obtain an element $\beta''\in \Lambda_i$, conjugate to $\beta$, in at most $k$ steps, so we obtain such an element in time $O(k)$.

We recall that the infimum of $\beta''$ (say $p$) is the summit infimum of $\beta$ (Remark \ref{remark:summit_infimum_Lambda_i}), hence it is non-negative, and we know the $\Lcorner_{4\lfloor p/2\rfloor+4,3\lfloor p/2\rfloor+3}$-shape of the Khovanov homology of $\beta''$ (hence of $\beta$) by Theorem~\ref{main2}. 

Notice that the above procedure computes the summit infimum of $\beta$, so we can apply this algorithm without knowing, a priori, whether its summit infimum is non-negative. In that case, if the infimum of $\beta''$ is negative, we know that $\beta$ is not conjugate to a positive braid. As we have seen, the whole computation takes time $O(k)$.
\end{proof}

\newpage

\begin{table}[htb!]
    \centering
    \begin{tiny}
    \begin{tblr}{colspec={|c||c|c|c|c|c|c|c|c|c|c|c|c|c|c|c|c|c|c|c|}, cell{13-17}{4-7} = {celeste}, cell{10-14}{8-11} = {celeste}, cell{4-8}{13-16} = {celeste}, cell{7}{12}={celeste}, cell{2-5}{17-19}={verde}}
\hline
\backslashbox{\!$j$\!}{\!$i$\!} & $0$ & $1$ & $2$ & $3$ & $4$ & $5$ & $6$ & $7$ & $8$ & $9$ & $\cdots$ & $\cdots$ & $\cdots$ & $\cdots$ & $\cdots$ & $\cdots$ & $\cdots$ & $\cdots$  \\
\hline
\hline

$\vdots$ &   &   & & & & &  & & &  & & & & & & & & $\Rone^2$       \\ \hline

$\vdots$ &   &   &  & & & & & &   & &  & &      & & & & $\Rone$ & $\Rone^3$ \\ \hline

$\vdots$ & & & & & & & & & & & & & & &  $\Rone$ &  & $\Tone{2}$ & $\Rone$  \\ \hline

$\mathbf{j}(p)$ & &  & & & & & & & & & & & & & $\Rone$ & $\Rone$ & & \\ \hline

$\vdots$  & &  & & & &  & & & & & & & $\Rone$ & $\Rone$ & &  & & \\ \hline

$\vdots$ &   &   &  &  &  & & & & & & \SetCell[r=5]{c}{$\udots$}  & & $\Tone{2}$ & $\Rone$ &        &  & &  \\ \hline

$\vdots$ &   &   &   & &  & & & & &  & & $\Rone$ & & & & \\  \hline

$\vdots$ & & & & &  &  & &   & & & \\  \hline

$\vdots$ & & & & &  &  & &   & & $\Rone$ & & \\  \hline

$\vdots$ & & & & &  &  & &   & & $\Rone$ & & \\  \hline

$\vdots$ & & & & &  &  & & $\Rone$ & $\Rone$    & & & \\  \hline

$\vdots$ & & & & &  &  $\Rone$  & & $\Tone{2}$ & $\Rone$   & & & \\  \hline

$\vdots$ & & & & &  &  $\Rone$  & $\Rone$ &   & & & \\  \hline

$\vdots$ & & & & $ \Rone $ & $\Rone$  & & & & & &  &  & & \\ \hline

$\vdots$ & & & & $ \Tone{2} $ & $\Rone$ & & & & & & & & &  \\ \hline

$\underline{j}(\widehat{\beta}) + 4$  &   &   & $ \Rone $ & &  &   & & &  & & & & & \\ \hline

$\underline{j}(\widehat{\beta}) + 2$  & $ \Rone $ &   &   & &   & & & & & & & & & \\ \hline

$\underline{j}(\widehat{\beta})$    & $ \Rone $ &   &   & & & & & & & & & & &  \\ \hline
\end{tblr}
\end{tiny}
    \caption[Caption for delta_p_even]{$H(\widehat{\Delta^p})$, with\footnotemark $\; p > 0$ even. The number of blue blocks is $\frac{p}{2}-1$.}
    \label{tab:Delta_p_even}
\end{table}

\footnotetext{For $p=0$ see Table \ref{tab:H(1)}.}

\begin{table}[htb!]
    \centering
    \begin{tiny}
    \begin{tblr}{colspec={|c||c|c|c|c|c|c|c|c|c|c|c|c|c|c|c|c|c|c|c|},  
 cell{5-9}{13-16} = {celeste}, cell{11-15}{8-11} = {celeste}, cell{14-18}{4-7} = {celeste}, cell{8}{12}={celeste}, cell{2-6}{17-20} = {naranja}}
\hline
\backslashbox{\!$j$\!}{\!$i$\!} & $0$ & $1$ & $2$ & $3$ & $4$ & $5$ & $6$ & $7$ & $8$ & $9$ & $\cdots$ & $\cdots$ & $\cdots$ & $\cdots$ & $\cdots$ & $\cdots$ & $\cdots$ & $\cdots$ & $\cdots$   \\
\hline
\hline

$\vdots$ & & & & &   &   &   & &  & & & & & & & & & & $\Rone$  \\
\hline
$\mathbf{j}(p)$           &  & & & & &   &   & &  & & & & &  & & &  & & $\Tone{2}$  \\
\hline
$\vdots$           &   &   & & & & &  & &  & & & & & & & & $\Rone$ & $\Rone^2$          \\ \hline

$\vdots$           &   &   & & & & &  & &  & & & & & & $\Rone$  & & $\Tone{2}$ & $\Rone$ &          \\ \hline

$\vdots$ &   &   & & &  & & & & & & & & & & $\Rone$ & $\Rone$   \\ \hline

$\vdots$ &   &   & & &  & & & & & & & & $\Rone$  & $\Rone$  & & & &   \\ \hline

$\vdots$   &   &   &    &    & & & & & & & \SetCell[r=5]{c}{$\udots$} &  & $\Tone{2}$ & $\Rone$ &  &   &       \\ \hline

$\vdots$ & & & &   & &  & & & & & & $ \Rone $ &  &   & &      \\ \hline

$\vdots$ &   &   &   & &   & &  & &  &   &    \\ \hline

$\vdots$ &   &   &   & &   & &  & &  & $\Rone$   &    \\ \hline

$\vdots$ &   &   &   & &   & &  & &  & $\Rone$   &    \\ \hline

$\vdots$ &   &   &   & &   & &  & $\Rone$ & $\Rone$ &   &    \\ \hline

$\vdots$ &   &   &   & &   & $ \Rone $ &  & $\Tone{2}$ & $\Rone$ &  &   &   \\ \hline

$\vdots$ &   &   &   & &   & $ \Rone $ & $\Rone$  &  &  &  &      \\ \hline

$\vdots$  &   &   &   & $ \Rone $ & $\Rone$ & & & & & & &  \\ \hline

$\vdots$  &   &   &   & $ \Tone{2} $ & $\Rone$ & & & & & & &    \\ \hline

$\underline{j}(\widehat{\beta}) + 4$  &   &   & $ \Rone $ & &  &   & & & &  & & \\ \hline

$\underline{j}(\widehat{\beta}) + 2$  & $ \Rone $ &   &   & &   & & & &  &  & &  \\ \hline

$\underline{j}(\widehat{\beta})$    & $ \Rone $ &   &   & & & & & &  & & &    \\ \hline

\end{tblr}
\end{tiny}
    \caption[Caption for delta_p_sigma1]{$H(\widehat{\Delta^p \sigma_1})$, with\footnotemark $\; p > 0$ even. The number of blue blocks is $\frac{p}{2}-1$.}
    \label{tab:Delta_p_sigma1}
\end{table}

\footnotetext{For $p=0$ see Table \ref{tab:H(a)}.}

\newpage

\begin{table}[htb!]
    \centering
    \begin{tiny}
    \begin{tblr}{colspec={|c||c|c|c|c|c|c|c|c|c|c|c|c|c|c|c|c|c|c|c|c|},  
 cell{12-16}{8-11} = {celeste}, cell{6-10}{13-16}={celeste}, cell{9}{12}={celeste}, cell{15-19}{4-7} = {celeste}, cell{3-7}{17-20} = {naranja}}
\hline
\backslashbox{\!$j$\!}{\!$i$\!} & $0$ & $1$ & $2$ & $3$ & $4$ & $5$ & $6$ & $7$ & $8$ & $9$ & $\cdots$ & $\cdots$ & $\cdots$ & $\cdots$ & $\cdots$ & $\cdots$ & $\cdots$ & $\cdots$ & $\cdots$ & $\cdots$ \\
\hline
\hline

$\vdots$  &   &   &   & &  & & & & & & &  & &  & & & &    &  & $\Rone$   \\ \hline

$\vdots$ & & & & &   &   &   & &  & & & & & & &  &  &  & $\Rone$ & $\Rone^2$     \\ \hline

$\mathbf{j}(p)$ & & & & &   &   &   & &  & & & & &  & & &  & & $\Tone{2}$ & $\Rone$       \\ \hline

$\vdots$ & & & & &   &   &   & &  & & & & & & & & $\Rone$ & $\Rone^2$ &             \\
\hline
$\vdots$  & & & &   &   &   &   & &  & & & & & & $\Rone$  & & $\Tone{2}$ & $\Rone$ &        &     \\
\hline

$\vdots$ &  & & & & &   & & &  & & & & & & $\Rone$ & $\Rone$     \\ \hline

$\vdots$ & & & & & & & & & & &  & & $\Rone$  & $\Rone$  & & &   \\ \hline

$\vdots$   &   &   &  & & & &  &    & &  & \SetCell[r=5]{c}{$\udots$} &  & $\Tone{2}$ & $\Rone$ &  &   & &       \\ \hline

$\vdots$ & & & &   & & & & & & & & $ \Rone $ &  &   & & &      \\ \hline

$\vdots$ &   &   &   & &   & &  & &  &   & &     \\ \hline

$\vdots$ &   &   &   & &   & &  & &  & $\Rone$  & &     \\ \hline

$\vdots$ &   &   &   & &   & &  & &  & $\Rone$  & &     \\ \hline

$\vdots$ &   &   &   & &   & &  & $\Rone$ & $\Rone$ &   & &     \\ \hline

$\vdots$ &   &   &   & &   & $ \Rone $ &  & $\Tone{2}$ & $\Rone$ &   & &   \\ \hline

$\vdots$ &   &   &   & &   & $ \Rone $ & $\Rone$ &  &  &  & &    \\ \hline

$\vdots$  &   &   &   & $ \Rone $ & $\Rone$ & & & & & & & &  \\
\hline

$\vdots$  &   &   &   & $ \Tone{2} $ & $\Rone$ & & & & & & & &  \\ \hline

$\underline{j}(\widehat{\beta}) + 4$  &   &   & $ \Rone $ & &  &   & & & &  & & &   \\ \hline

$\underline{j}(\widehat{\beta}) + 2$  & $ \Rone $ &   &   & &   & & & &  & & &  \\ \hline

$\underline{j}(\widehat{\beta})$    & $ \Rone $ &   &   & & & & & &  & & &   \\ \hline

\end{tblr}
\end{tiny}
    \caption[Caption for Delta_p_sigma1-2]{$H(\widehat{\Delta^{p}\sigma_1^2})$, with\footnotemark $\; p > 0$ even. The number of blue blocks is $\frac{p}{2}-1$.}
    \label{tab:Delta_p_sigma1^2}
\end{table}

\footnotetext{For $p=0$ see Table \ref{tab:H(aa)}.}

\begin{table}[htb!]
    \centering
    \begin{tiny}
    \begin{tblr}{colspec={|c||c|c|c|c|c|c|c|c|c|c|c|c|c|c|c|c|}, cell{11-15}{4-7} = {celeste}, cell{2-6}{13-16} = {celeste}, cell{8-12}{8-11}={celeste}, cell{5}{12}={celeste}}
\hline
\backslashbox{\!$j$\!}{\!$i$\!} & $0$ & $1$ & $2$ & $3$ & $4$ & $5$ & $6$ & $7$ & $8$ & $9$ &  $\cdots$ & $\cdots$ & $\cdots$ & $\cdots$ & $\cdots$  \\
\hline
\hline

$\vdots$ & & & & & & &  & & & & & & & &  $\Rone$   \\ \hline

$\mathbf{j}(p)$ & &  & & & & & & & & & & & & & $\Rone$  \\ \hline

$\vdots$  & &  & & & & & & & &  & & & $\Rone$ & $\Rone$ &    \\ \hline

$\vdots$ &   &   &  & & & & & & & & \SetCell[r=5]{c}{$\udots$}  & & $\Tone{2}$ & $\Rone$ &           \\ \hline

$\vdots$ &   &   &   & &   &  & &  & & & & $\Rone$ &  \\  \hline

$\vdots$ & & & & &  &  & &     \\  \hline

$\vdots$ & & & & &  &  & & & & $\Rone$    \\  \hline

$\vdots$ & & & & &  &  & & & & $\Rone$     \\  \hline

$\vdots$ & & & & &  &  & & $\Rone$ & $\Rone$     \\  \hline

$\vdots$ & & & & &  &  $\Rone$  & & $\Tone{2}$ & $\Rone$     \\  \hline

$\vdots$ & & & & &  &  $\Rone$  & $\Rone$ &   & &  \\  \hline

$\vdots$ & & & & $ \Rone $ & $\Rone$  & & & & & &   \\ \hline

$\vdots$ & & & & $ \Tone{2} $ & $\Rone$ & & & & & &  \\ \hline

$\underline{j}(\widehat{\beta}) + 4$  &   &   & $ \Rone $ & &  &   & & &  & & \\ \hline

$\underline{j}(\widehat{\beta}) + 2$  & $ \Rone $ &   &   & &   & & & & & &  \\ \hline

$\underline{j}(\widehat{\beta})$    & $ \Rone $ &   &   & & & & & & & &   \\ \hline
\end{tblr}
\end{tiny}
    \caption{$H(\widehat{\Delta^{p}\sigma_1\sigma_2})$, with $p\geq 0$ even. The number of blue blocks is $\frac{p}{2}$.}
    \label{tab:Delta_p_sigma1_sigma2}
\end{table}


\begin{table}[htb!]
    \centering
    \begin{tiny}
    \begin{tblr}{colspec={|c||c|c|c|c|c|c|c|c|c|c|c|c|c|c|c|c|c|c|c|}, cell{13-17}{4-7} = {celeste}, cell{4-8}{13-16} = {celeste}, cell{10-14}{8-11}={celeste}, cell{7}{12}={celeste}}
\hline
\backslashbox{\!$j$\!}{\!$i$\!} & $0$ & $1$ & $2$ & $3$ & $4$ & $5$ & $6$ & $7$ & $8$ & $9$ &  $\cdots$ & $\cdots$ & $\cdots$ & $\cdots$ & $\cdots$ & $\cdots$ & $\mathbf{i}(p)$ & $\cdots$  \\
\hline
\hline

$\vdots$ &   &   &   & & & & & & &  & & & & & & & & $\Rone$       \\ \hline

$\vdots$ &   &   &   & &   & &  & & & & & &  & & & &  & $\Rone$ \\ \hline

$\vdots$ & & & & & & &  & & & & & & & &  $\Rone$ & $\Rone^2$  &  &   \\ \hline

$\mathbf{j}(p)$ & &  & & & & & & & & & & & & & $\Rone$ & $\Rone^2$ & & \\ \hline

$\vdots$  & &  & & & &  & & & & & & & $\Rone$ & $\Rone$ & &  & & \\ \hline

$\vdots$ &   &   &  &  & & & & & & & \SetCell[r=5]{c}{$\udots$}  & & $\Tone{2}$ & $\Rone$ &        &  & &  \\ \hline

$\vdots$ &   &   &   & & & & & & &  & & $\Rone$ & & & & \\  \hline

$\vdots$ & & & & &  &  & &   & & & \\  \hline

$\vdots$ & & & & &  &  & &   & & $\Rone$ & \\  \hline

$\vdots$ & & & & &  &  & &   & & $\Rone$ & \\  \hline

$\vdots$ & & & & &  &  & & $\Rone$  & $\Rone$ & & \\  \hline

$\vdots$ & & & & &  &  $\Rone$  & & $\Tone{2}$    & $\Rone$ & & \\  \hline

$\vdots$ & & & & &  &  $\Rone$  & $\Rone$ &   & & & \\  \hline

$\vdots$ & & & & $ \Rone $ & $\Rone$  & & & & & &  &  & & \\ \hline

$\vdots$ & & & & $ \Tone{2} $ & $\Rone$ & & & & & & & & &  \\ \hline

$\underline{j}(\widehat{\beta}) + 4$  &   &   & $ \Rone $ & &  &   & & &  & & & & & \\ \hline

$\underline{j}(\widehat{\beta}) + 2$  & $ \Rone $ &   &   & &   & & & & & & & & & \\ \hline

$\underline{j}(\widehat{\beta})$    & $ \Rone $ &   &   & & & & & & & & & & &  \\ \hline
\end{tblr}
\end{tiny}
    \caption{$H(\widehat{\Delta^p\sigma_1^2 \sigma_2^2})$, with $p\geq 0$ even. The number of blue blocks is $\frac{p}{2}$.}
    \label{tab:Delta_p_even_sigma1^2_sigma2^2}
\end{table}

\newpage

\begin{table}[htb!]
    \centering
    \begin{tiny}
    \begin{tblr}{colspec={|c||c|c|c|c|c|c|c|c|c|c|c|c|c|c|c|c|c|}, cell{11-15}{4-7} = {celeste}, cell{2-6}{13-16} = {celeste}, cell{8-12}{8-11}={celeste}, cell{5}{12}={celeste}}
\hline
\backslashbox{\!$j$\!}{\!$i$\!} & $0$ & $1$ & $2$ & $3$ & $4$ & $5$ & $6$ & $7$ & $8$ & $9$ &  $\cdots$ & $\cdots$ & $\cdots$ & $\cdots$ & $\cdots$ & $\cdots$ \\
\hline
\hline

$\vdots$ & & & & & & &  & & & & & & & &  $\Rone$ & $\Rone$  \\ \hline

$\mathbf{j}(p)$ & &  & & & & & & & & &  & & & & $\Rone$ & $\Rone$  \\ \hline

$\vdots$  & &  & & & &  & & & & & & &  $\Rone$ & $\Rone$ & &   \\ \hline

$\vdots$ &   &   &  &  & & & & & & & \SetCell[r=5]{c}{$\udots$}  & & $\Tone{2}$ & $\Rone$ &        &   \\ \hline

$\vdots$ &   &   &   & & & & & & &  & & $\Rone$ & &  \\  \hline

$\vdots$ & & & & &  &  & &   &  \\  \hline

$\vdots$ & & & & &  &           & &    & & $\Rone$  \\  \hline

$\vdots$ & & & & &  &           & &    & & $\Rone$  \\  \hline

$\vdots$ & & & & &  &           & & $\Rone $ & $\Rone$   &  \\  \hline

$\vdots$ & & & & &  &  $\Rone$  & & $\Tone{2}$ & $\Rone$   &  \\  \hline

$\vdots$ & & & & &  &  $\Rone$  & $\Rone$ &   & & & \\  \hline

$\vdots$ & & & & $ \Rone $ & $\Rone$  & & & & & &  &  \\ \hline

$\vdots$ & & & & $ \Tone{2} $ & $\Rone$ & & & & & & &  \\ \hline

$\underline{j}(\widehat{\beta}) + 4$  &   &   & $ \Rone $ & &  &   & & &  & & & \\ \hline

$\underline{j}(\widehat{\beta}) + 2$  & $ \Rone $ &   &   & &   & & & & & & & \\ \hline

$\underline{j}(\widehat{\beta})$    & $ \Rone $ &   &   & & & & & & & & &  \\ \hline
\end{tblr}
\end{tiny}
    \caption{$H(\widehat{\Delta^{p}})$, with $p\geq 0$ odd. The number of blue blocks is $\frac{p-1}{2}$.}
    \label{tab:Delta_p_odd}
\end{table}


\begin{table}[htb!]
    \centering
    \captionsetup{width=.99\linewidth}
    \begin{tiny}
    \begin{tblr}{colspec={|c||c|c|c|c|c|c|c|c|c|c|c|c|c|c|c|c|c|c|c|c|c|c|},  
 cell{14-18}{8-11} = {celeste}, cell{8-12}{13-16} = {celeste}, cell{17-21}{4-7} = {celeste}, cell{11}{12}={celeste}, cell{5-9}{17-20} = {naranja}}
\hline
\backslashbox{\!$j$\!}{\!$i$\!} & $0$ & $1$ & $2$ & $3$ & $4$ & $5$ & $6$ & $7$ & $8$ & $9$ & $\cdots$ & $\cdots$ & $\cdots$ & $\cdots$ & $\cdots$ & $\cdots$ & $\cdots$ & $\cdots$ & $\cdots$ & $\cdots$ & $\mathbf{i}(p)$ & $\cdots$  \\
\hline
\hline

$\vdots$           &   &   &   & &  & & & & & & & & & & & & & & & & & \SetCell[r=4]{c}{$W_{\widehat{\beta}}$}       \\
\cline{1-22}
$\vdots$           &   &   &   & &  & & & && & & & & & & & & & & & $\Rone$    \\
\cline{1-22}
$\vdots$           &   &   & & & & & & &  & & & & & & &  & &      &  & & $\Rone \oplus \Tone{2}$  \\
\cline{1-22}
$\vdots$           &   & & & & & &   & &  & & & & & & &  &  &  & $\Rone$ & $\Rone$ & $\Tone{2}$   \\
\hline
$\mathbf{j}(p)$  & & & & &   &   &   & &  & & & & &  & & &  & & $\Tone{2}$ & $\Rone$        \\
\hline
$\vdots$           & & & & & &   &   & &  & & & & & & & & $\Rone$ & $\Rone^2$ &            \\
\hline
$\vdots$  & & & & &   &   &   & &  & & & & & & $\Rone$  & & $\Tone{2}$ & $\Rone$ &        &   \\
\hline

$\vdots$ &   &   & & & & & & &  & & & & & & $\Rone$ & $\Rone$ &   \\ \hline

$\vdots$ & & & & & & & & & & & & & $\Rone$  & $\Rone$  & & & & &   \\ \hline

$\vdots$   &   &   &    & & & & &    & &  & \SetCell[r=5]{c}{$\udots$} &  & $\Tone{2}$ & $\Rone$ &  &   & &     \\ \hline

$\vdots$ & & & & & & & &  & &  & & $ \Rone $ &  &   & & &    \\ \hline

$\vdots$ &   &   &   & &   & &  & &  &   & & &    \\ \hline
$\vdots$ &   &   &   & &   & &  & &  & $\Rone$  & & &    \\ \hline
$\vdots$ &   &   &   & &   & &  & &  & $\Rone$  & & &    \\ \hline
$\vdots$ &   &   &   & &   & &  & $\Rone$ & $\Rone$ &   & & &    \\ \hline

$\vdots$ &   &   &   & &   & $ \Rone $ & & $\Tone{2}$ & $\Rone$ &   & &  \\ \hline

$\vdots$ &   &   &   & &   & $ \Rone $ & $\Rone$  &  &  &  & &     \\ \hline

$\vdots$  &   &   &   & $ \Rone $ & $\Rone$ & & & & & & & &   \\
\hline

$\vdots$  &   &   &   & $ \Tone{2} $ & $\Rone$ & & & & & & & &    \\ \hline

$\underline{j}(\widehat{\beta}) + 4$  &   &   & $ \Rone $ & &  &   & & & &  & & &  \\ \hline

$\underline{j}(\widehat{\beta}) + 2$  & $ \Rone $ &   &   & &   & & & &  & & & &   \\ \hline

$\underline{j}(\widehat{\beta})$    & $ \Rone $ &   &   & & & & & &  & & & &  \\ \hline

\end{tblr}
\end{tiny}
    \caption{$H(\widehat{\beta})$, with $\beta = \Delta^p \sigma_1^{k_1}$, $p \geq 0$ even and $k_1 \geq 3$. The number of blue blocks is $\frac{p}{2}-1$.}
    \label{tab:Delta_p_C1}
\end{table}

\newpage

\begin{table}[htb!]
    \centering
    \captionsetup{width=.95\linewidth}
    \begin{tiny}
    \begin{tblr}{colspec={|c||c|c|c|c|c|c|c|c|c|c|c|c|c|c|c|c|c|c|c|}, cell{13-17}{4-7} = {celeste}, cell{10-14}{8-11} = {celeste}, cell{14}{11}={celeste}, cell{7}{12}={celeste}, cell{4-8}{13-16} = {celeste}}
\hline
\backslashbox{\!$j$\!}{\!$i$\!} & $0$ & $1$ & $2$ & $3$ & $4$ & $5$ & $6$ & $7$ & $8$ & $9$ &  $\cdots$ & $\cdots$ & $\cdots$ & $\cdots$ & $\cdots$ & $\cdots$ & $\mathbf{i}(p)$ & $\cdots$  \\
\hline
\hline

$\vdots$ &   &   &   & & & & & & &  & & & & & & & & \SetCell[r=3]{c}{$X_{\widehat{\beta}}$}       \\ \hline

$\vdots$ &   &   &   & & & & &  & & &  & &      & & & & $\Rone$  \\ \hline

$\vdots$ & & & & & & & & & &  & & & & &  $\Rone$ & $\Rone$ & $\Tone{2}$  \\ \hline

$\mathbf{j}(p)$ & &  & & & & & & & & & & & & & $\Rone$ & $\Rone^2$ & & \\ \hline

$\vdots$  & &  & & & &  & & & & & & & $\Rone$ & $\Rone$ & &  & & \\ \hline

$\vdots$ &   &   &  &  & & & & & & & \SetCell[r=5]{c}{$\udots$}  & & $\Tone{2}$ & $\Rone$ &        &  & &  \\ \hline

$\vdots$ &   &   &   & & & & &   & &  & & $\Rone$ & & & & \\  \hline

$\vdots$ & & & & &  &  & &  & & & & & & \\  \hline
$\vdots$ & & & & &  &  & &  & & $\Rone$ & & & & \\  \hline
$\vdots$ & & & & &  &  & &  & & $\Rone$ & & & & \\  \hline
$\vdots$ & & & & &  &  & & $\Rone$    & $\Rone$ & & & & & \\  \hline

$\vdots$ & & & & &  &  $\Rone$  & & $\Tone{2}$ & $\Rone$ & & & & & \\  \hline

$\vdots$ & & & & &  &  $\Rone$  & $\Rone$ & & & & & & & \\  \hline

$\vdots$ & & & & $ \Rone $ & $\Rone$  & & & & & & & & & &  & & \\ \hline

$\vdots$ & & & & $ \Tone{2} $ & $\Rone$ & & & & & & & & & & & &  \\ \hline

$\underline{j}(\widehat{\beta}) + 4$  &   &   & $ \Rone $ & &  & & &&  & & &  & & & & & \\ \hline

$\underline{j}(\widehat{\beta}) + 2$  & $ \Rone $ &   & & & & & &   & & & & & & & & & \\ \hline

$\underline{j}(\widehat{\beta})$    & $ \Rone $ &   & & & & & & & & & & & & & & &  \\ \hline
\end{tblr}
\end{tiny}
    \caption{$H(\widehat{\beta})$, with $\beta = \Delta^p \sigma_1^{k_1}\sigma_2^2$, $p \geq 0$ even and $k_1 \geq 3$. The number of blue blocks is $\frac{p}{2}$.}
    \label{tab:Delta_p_C2}
\end{table}


\begin{table}[htb!]
    \centering
      \captionsetup{width=.95\linewidth}
    \begin{tiny}
    \begin{tblr}{colspec={|c||c|c|c|c|c|c|c|c|c|c|c|c|c|c|c|c|c|c|c|}, cell{13-17}{4-7} = {celeste}, cell{4-8}{13-16} = {celeste}, cell{10-14}{8-11}={celeste}, cell{7}{12}={celeste}}
\hline
\backslashbox{\!$j$\!}{\!$i$\!} & $0$ & $1$ & $2$ & $3$ & $4$ & $5$ & $6$ & $7$ & $8$ & $9$ & $\cdots$ & $\cdots$ & $\cdots$ & $\cdots$ & $\cdots$ & $\cdots$ & $\mathbf{i}(p)$ & $\cdots$  \\
\hline
\hline

$\vdots$ &   &   &   & & &  & & & & & & & & & & & & \SetCell[r=3]{c}{$Y_{\widehat{\beta}}$}       \\ \hline

$\vdots$ &   &   &   & &   & & & & & &  & &      & & & & $\Rone^2$  \\ \hline

$\vdots$ & & & & & & &  & & & & & & & &  $\Rone$ & & $\Tone{2}^2$  \\ \hline

$\mathbf{j}(p)$ & &  & & & & & & & & & & & & & $\Rone$ & $\Rone^2$ & & \\ \hline

$\vdots$  & &  & & & & & & & & & & & $\Rone$ & $\Rone$ & &  & & \\ \hline

$\vdots$ &   &   &  &  & & & & & & & \SetCell[r=5]{c}{$\udots$}  & & $\Tone{2}$ & $\Rone$ &        &  & &  \\ \hline

$\vdots$ & & & & &  &  & &   & & & & $\Rone$ \\  \hline

$\vdots$ & & & & &  &  & &   & & & \\  \hline

$\vdots$ & & & & &  &  & &   & & $\Rone$ & \\  \hline

$\vdots$ & & & & &  &  & &   & & $\Rone$ & \\  \hline

$\vdots$ &   &   &   & &   & &  & $\Rone$ & $\Rone$ & & & & & & & \\  \hline

$\vdots$ & & & & &  &  $\Rone$  & &  $\Tone{2}$ &  $\Rone$  & & & \\  \hline

$\vdots$ & & & & &  &  $\Rone$  & $\Rone$ &   & & & \\  \hline

$\vdots$ & & & & $ \Rone $ & $\Rone$  & & & & & &  &  & & \\ \hline

$\vdots$ & & & & $ \Tone{2} $ & $\Rone$ & & & & & & & & &  \\ \hline

$\underline{j}(\widehat{\beta}) + 4$  &   &   & $ \Rone $ & &  &   & & &  & & & & & \\ \hline

$\underline{j}(\widehat{\beta}) + 2$  & $ \Rone $ &   &   & &   & & & & & & & & & \\ \hline

$\underline{j}(\widehat{\beta})$    & $ \Rone $ &   &   & & & & & & & & & & &  \\ \hline
\end{tblr}
\end{tiny}
    \caption{$H(\widehat{\beta})$, with $\beta = \Delta^p \sigma_1^{k_1}\sigma_2^{k_2}$, $p \geq 0$ even and $k_1,k_2 \geq 3$. The number of blue blocks is $\frac{p}{2}$.}
    \label{tab:Delta_p_C3}
\end{table}

\begin{table}[htb!]
    \centering
    \begin{tiny}
    \captionsetup{width=.99\linewidth}
    \begin{tblr}{colspec={|c||c|c|c|c|c|c|c|c|c|c|c|c|c|c|c|c|c|c|c|}, cell{13-17}{4-7} = {celeste}, cell{4-8}{13-16} = {celeste}, cell{10-14}{8-11}={celeste}, cell{7}{12}={celeste}}
\hline
\backslashbox{\!$j$\!}{\!$i$\!} & $0$ & $1$ & $2$ & $3$ & $4$ & $5$ & $6$ & $7$ & $8$ & $9$ &  $\cdots$ & $\cdots$ & $\cdots$ & $\cdots$ & $\cdots$ & $\cdots$ & $\mathbf{i}(p)$ & $\cdots$  \\
\hline
\hline

$\vdots$ &   &   &  & & & & & & &  & & & & & & & & \SetCell[r=3]{c}{$Z_{\widehat{\beta}}$}       \\ \hline

$\vdots$ &   &   &   & &   & &  & & & & & & & & & & $\Rone$  \\ \hline

$\vdots$ & & & & & & &  & & & & & & & &  $\Rone$ &  & $\Tone{2}$  \\ \hline

$\mathbf{j}(p)$ & &  & & & & & & & & & & & & & $\Rone$ & $\Rone$ & & \\ \hline

$\vdots$  & &  & & & &  & & & & & & & $\Rone$ & $\Rone$ & &  & & \\ \hline

$\vdots$ &   &   &  &  & & & & & & &  \SetCell[r=5]{c}{$\udots$}  & & $\Tone{2}$ & $\Rone$ &        &  & &  \\ \hline

$\vdots$ &   &   &   & &   &  & & & & & & $\Rone$ & & & & \\  \hline

$\vdots$ & & & & &  &  & &   & & & \\  \hline

$\vdots$ & & & & &  &  & &   & &  $\Rone$ & & \\  \hline
$\vdots$ & & & & &  &  & &   &&  $\Rone$ & & \\  \hline
$\vdots$ & & & & &  &  & & $\Rone$   & $\Rone$ & & \\  \hline

$\vdots$ & & & & &  &  $\Rone$  & & $\Tone{2}$   & $\Rone$ & & \\  \hline

$\vdots$ & & & & &  &  $\Rone$  & $\Rone$ &   & & & \\  \hline

$\vdots$ & & & & $ \Rone $ & $\Rone$  & & & & & &  &  & & \\ \hline

$\vdots$ & & & & $ \Tone{2} $ & $\Rone$ & & & & & & & & &  \\ \hline

$\underline{j}(\widehat{\beta}) + 4$  &   &   & $ \Rone $ & &  &   & & &  & & & & & \\ \hline

$\underline{j}(\widehat{\beta}) + 2$  & $ \Rone $ &   &   & &   & & & & & & & & & \\ \hline

$\underline{j}(\widehat{\beta})$    & $ \Rone $ &   &   & & & & & & & & & & &  \\ \hline
\end{tblr}
\end{tiny}
    \caption{$H(\widehat{\beta})$, with $\beta = \Delta^{2 \lfloor \frac{p}{2} \rfloor}\gamma$ and $\gamma \neq \Delta$, where $\inf(\gamma) = 1$ or $\inf(\gamma)=0$ and $\operatorname{sl}(\gamma) \geq 4$. The number of blue blocks is $\lfloor \frac{p}{2} \rfloor $.}
    \label{tab:Delta_p_C4}
\end{table}

\clearpage

\bibliographystyle{amsplain}

\end{document}